 \theoremstyle{definition}
 \newtheorem*{defn*}{\protect\definitionname}
\theoremstyle{plain}
\newtheorem{thm}{\protect\theoremname}[section]
  \theoremstyle{remark}
  \newtheorem{rem}[thm]{\protect\remarkname}
  \theoremstyle{plain}
  \newtheorem{prop}[thm]{\protect\propositionname}
  \theoremstyle{plain}
  \newtheorem{cor}[thm]{\protect\corollaryname}
  \theoremstyle{definition}
  \newtheorem{example}[thm]{\protect\examplename}
  \theoremstyle{plain}
  \newtheorem{lem}[thm]{\protect\lemmaname}
  \theoremstyle{definition}
  \newtheorem{defn}[thm]{\protect\definitionname}
\newcommand{\spt}{\operatorname{spt}}
\newcommand{\sym}{\operatorname{sym}}
\newcommand{\essran}{\operatorname{essran}}
\newcommand*{\e}{\mathrm{e}}
\renewcommand*{\i}{\mathrm{i}}
\newcommand{\m}{\operatorname{m}}
\renewcommand{\tilde}{\widetilde}
\renewcommand{\hat}{\widehat}
\renewcommand{\Re}{\operatorname{Re}}
\renewcommand{\Im}{\operatorname{Im}}
\DeclareMathOperator*{\wlim}{w-lim}
\newenvironment{keywords}{ \noindent\footnotesize\textbf{Keywords and phrases:}}{}
\newenvironment{class}{\noindent\footnotesize\textbf{Mathematics subject classification 2010:}}{}
  \providecommand{\corollaryname}{Corollary}
  \providecommand{\definitionname}{Definition}
  \providecommand{\examplename}{Example}
  \providecommand{\lemmaname}{Lemma}
  \providecommand{\propositionname}{Proposition}
  \providecommand{\remarkname}{Remark}
\providecommand{\theoremname}{Theorem}
\begin{document}

\title{On Differential-Algebraic Equations in Infinite Dimensions}

\author{Sascha Trostorff \& Marcus Waurick}

\maketitle
\textbf{Abstract. }We consider a class of differential-algebraic equations
(DAEs) with index zero in an infinite dimensional Hilbert space. We
define a space of consistent initial values, which lead to classical
continuously differential solutions for the associated DAE. Moreover,
we show that for arbitrary initial values we obtain mild solutions
for the associated problem. We discuss the asymptotic behaviour of
solutions for both problems. In particular, we provide a characterisation
for exponential stability and exponential dichotomies in terms of
the spectrum of the associated operator pencil. 

\medskip{}

\begin{keywords} Differential-algebraic equations, consistent initial
values, strong and mild solutions, exponential stability, exponential
dichotomy \end{keywords}

\begin{class} 34A09, 34A12, 34D20, 34D09, 46N20, 47N20 \end{class}

\section{Introduction}

Let $H$ be a Hilbert space and let $M_{0},M_{1}$ be bounded linear
operators in $H$. In this article we are concerned with the implicit
initial value problem of finding $u\colon\mathbb{R}_{\geq0}\to H$
such that
\begin{equation}
\tag{{\ensuremath{\ast}}}\begin{cases}
M_{0}u'(t)+M_{1}u(t)=0, & t>0,\\
u(0+)=u_{0}
\end{cases}\label{eq:IVP0}
\end{equation}
for some given initial datum $u_{0}\in H$, where \prettyref{eq:IVP0}
has to be interpreted in a particular sense. This problem is commonly
known as a ``differential-algebraic equation'' and has attracted
a lot of interest in recent years. Most prominently, for finite-dimensional
$H$, differential-algebraic equations have applications to control
and electrical circuit theory, see e.g.~\cite{Dai1989}. For so-called
regular systems and a finite-dimensional state space $H$, the discussion
of \prettyref{eq:IVP0} can be simplified by applying a generalised
Jordan normal form, see e.g.~\cite{Berger2012,Dai1989,Kunkel2006}.
A similar strategy can be used for particular infinite-dimensional
cases, see \cite{Reis2006}. We are unaware of any other treatment
of differential-algebraic equations in infinite spatial dimensions.
Thus, for the convenience of the reader, we present this work as much as
self-contained as possible. 

Our main motivation for discussing the infinite-dimensional case comes
from the viewpoint of so-called evolutionary equations, a certain
class of partial differential-algebraic equations introduced in \cite{PicPhy}.
The focus of \cite{PicPhy} was to derive a particular Hilbert space
setting such that a large class of linear equations in mathematical
physics can be dealt with in a unified framework. A particular case
is the equation
\begin{equation}
\left(M_{0}u\right)'+M_{1}u+Au=f\label{eq:PDAE}
\end{equation}
for some (unbounded, skew-selfadjoint) operator $A$ in $H$. In order
to cover a large class of problems and to allow for $M_{0}=0$, as
well, the basic setting in \cite{PicPhy} uses homogeneous initial
conditions. This, on the other hand, lead to the question of what
possible initial values can be assumed for \prettyref{eq:PDAE} in
order to have a solution $u$. 

In the habilitation thesis \cite{Trostorff2017a} this question has
been addressed. In this article we shall carry out a more detailed
analysis for the case of when $A=0$. For this reason, we shall furthermore
focus on the case of regular systems with index $0$. Moreover, we
will assume that $R(M_{0})\subseteq H$ is closed.

Under the assumptions mentioned, we will derive a space of admissible
initial values $u_{0}$ such that \prettyref{eq:IVP0} admits a continuous
differentiable (``strong'') solution. Moreover, we will discuss
the regularity of solutions for \prettyref{eq:IVP0}, if $u_{0}$
is not admissible. It turns out that the correct notion of a solution
can be phrased as a ``mild'' solution, that is, $u$ satisfies \prettyref{eq:IVP0}
only in an once integrated sense. Having obtained well-posedness of
\prettyref{eq:IVP0}, we will address the asymptotic behaviour of
both strong and mild solutions. We will formulate a spectral characterisation
of both mild and strong exponential dichotomy as well as mild and
strong exponential stability and will recover the well-known finite-dimensional
stability results, see e.g.~\cite{Griepentrog1986,Dai1989,Stykel2002}.
We emphasise that in our approach we do not use the generalised Jordan
normal form or Weierstrass normal form, simply because these strategies
are not applicable under the assumptions stated. 

A more thorough analysis of the case of non zero index has been initiated
in \cite{Trostorff2017} and will be addressed in future work.

We shall comment on the organisation of this text. In the next section,
we gather some material on evolutionary equations and establish the
time-derivative as a suitable continuously invertible operator in
a weighted vector-valued $L_{2}$-space. In \prettyref{sec:Regular-linear-operator}
we introduce our central object of study and define regular linear
operator pencils (of index $0$). We also provide a characterisation
of regularity given the closedness of the range of $M_{0}$. The characterisation
of admissible initial values $u_{0}$ such that \prettyref{eq:IVP0}
has a continuous differentiable solution will be given in \prettyref{sec:firstIV}.
Mild solutions, that is, where \prettyref{eq:IVP0} holds in an integrated
sense, only, will be considered in \prettyref{sec:secondIV}. It turns
out that the problem of finding continuously differentiable solutions
to \prettyref{eq:IVP0} can be written as an abstract Cauchy problem
leading to a norm-continuous semigroup as a fundamental solution.
The spectrum of the generator of this semigroup is shown to coincide
with the spectrum of the operator pencil in \prettyref{sec:spectra}.
In \prettyref{sec:Asymptotics} we address the asymptotics (exponential
dichotomy) of both the derived Cauchy problem as well as for mild
solutions. Note that the difficult part is to \emph{define} an appropriate
notion for the case of mild solutions as they are a priori only locally
integrable. 

\section{Preliminaries}

Following \cite{PicPhy,KPSTW14_OD,Picard1989}, we introduce the temporal
derivative as a normal operator in an exponentially weighted $L_{2}$-space.
One can view the exponential weight as an $L_{2}$-variant of the
well-known Morgenstern norm that is used for proving the Picard\textendash Lindel\"of
theorem for arbitrary large Lipschitz constant, see \cite{Morgenstern1952}
or \cite[Section 4]{PTW14_OD} for a more recent reference.
\begin{defn*}
Let $\rho\in\mathbb{R}.$ We define the Hilbert space $L_{2,\rho}(\mathbb{R};H)$
of (equivalence classes of) $H$-valued functions as follows 
\[
L_{2,\rho}(\mathbb{R};H)\coloneqq\left\{ f:\mathbb{R}\to H\,;\,f\mbox{ measurable,\,}\intop_{\mathbb{R}}|f(t)|_{H}^{2}\exp(-2\rho t)\,\mathrm{d}t<\infty\right\} ,
\]
equipped with the usual inner product 
\[
\langle f,g\rangle_{\rho}\coloneqq\intop_{\mathbb{R}}\langle f(t),g(t)\rangle_{H}\exp(-2\rho t)\,\mathrm{d}t.
\]
Moreover, we define the operator $\partial_{0,\rho}:H_{\rho}^{1}(\mathbb{R};H)\subseteq L_{2,\rho}(\mathbb{R};H)\to L_{2,\rho}(\mathbb{R};H)$
as the closure of 
\begin{align*}
C_{c}^{\infty}(\mathbb{R};H)\subseteq L_{2,\rho}(\mathbb{R};H) & \to L_{2,\rho}(\mathbb{R};H),\\
\phi & \mapsto\phi'
\end{align*}
where $C_{c}^{\infty}(\mathbb{R};H)$ denotes the space of arbitrarily
differentiable functions from $\mathbb{R}$ to $H$ with compact support.
\end{defn*}
Note that every continuous function $f\colon\mathbb{R}\to H$ with
support bounded below that satisfies the exponential growth condition
$|f(t)|_{H}\leq Me^{\omega t}$ for some $\omega\in\mathbb{R},$ $M\geq0$
and all $t\in\mathbb{R}$ belongs to $L_{2,\rho}(\mathbb{R};H)$ for
all $\rho>\omega$. We shall need this observation later on. Some
more remarks are in order. 
\begin{rem}
\begin{enumerate}[(a)]

\item For $\rho=0$ the space $L_{2,0}(\mathbb{R};H)$ is nothing
but the standard $L_{2}$-space of Bochner-measurable functions with
values in $H$. Moreover, $\partial_{0,0}$ is the usual weak derivative
on $L_{2}.$ 

\item The domain $H_{\rho}^{1}(\mathbb{R};H)$ is an exponentially
weighted variant of the Sobolev space $H^{1}(\mathbb{R};H)$ and becomes
a Hilbert space with respect to the graph inner product of $\partial_{0,\rho}$,
that is, 
\[
\langle f,g\rangle_{\rho,1}\coloneqq\langle f,g\rangle_{\rho}+\langle\partial_{0,\rho}f,\partial_{0,\rho}g\rangle_{\rho}.
\]

\end{enumerate}
\end{rem}

It turns out that the operator $\partial_{0,\rho}$ is a normal operator,
whose spectral representation is given in terms of the so-called Fourier\textendash Laplace
transform, which is defined as follows.
\begin{prop}[{\cite[Corollary 2.5]{KPSTW14_OD}}]
\label{prop:FLtransform} Let $\rho\in\mathbb{R}.$ Then the operator
\begin{align*}
\mathcal{L}_{\rho}|_{C_{c}^{\infty}(\mathbb{R};H)}:C_{c}^{\infty}(\mathbb{R};H)\subseteq L_{2,\rho}(\mathbb{R};H) & \to L_{2,0}(\mathbb{R};H)\\
\phi & \mapsto\left(t\mapsto\frac{1}{\sqrt{2\pi}}\intop_{\mathbb{R}}\exp(-(\i t+\rho)s)\phi(s)\,\mathrm{d}s\right)
\end{align*}
has a unitary extension to $L_{2,\rho}(\mathbb{R};H)$, which will
be denoted by $\mathcal{L}_{\rho}.$ Moreover,
\[
\partial_{0,\rho}=\mathcal{L}_{\rho}^{\ast}(\i\m+\rho)\mathcal{L}_{\rho},
\]
where $\m:D(\m)\subseteq L_{2,0}(\mathbb{R};H)\to L_{2,0}(\mathbb{R};H)$
is given by 
\[
\m f\coloneqq(t\mapsto tf(t))
\]
for $f\in D(\m)\coloneqq\left\{ g\in L_{2,0}(\mathbb{R};H)\,;\,(t\mapsto tg(t))\in L_{2,0}(\mathbb{R};H)\right\} .$ 
\end{prop}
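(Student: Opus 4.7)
The plan is to factor the Fourier\textendash Laplace transform through the standard Fourier transform in $L_{2,0}$ and then identify both sides of the spectral formula via a core argument.

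First, I would introduce the multiplication operator $E_\rho \colon L_{2,\rho}(\mathbb{R};H) \to L_{2,0}(\mathbb{R};H)$, $(E_\rho f)(t) \coloneqq \e^{-\rho t} f(t)$, which is manifestly unitary by the definitions of the weighted inner products. Writing $\mathcal{F}_0$ for the standard Fourier transform on $L_{2,0}(\mathbb{R};H)$ (unitary by Plancherel's theorem for Hilbert-space-valued functions), a direct calculation shows $\mathcal{L}_\rho|_{C_c^\infty} = \mathcal{F}_0 \circ E_\rho|_{C_c^\infty}$. Since $C_c^\infty(\mathbb{R};H)$ is dense in $L_{2,\rho}(\mathbb{R};H)$, the composition $\mathcal{F}_0 \circ E_\rho$ is the desired unitary extension.

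Next I would verify the spectral representation on the core $C_c^\infty(\mathbb{R};H)$. The product rule yields $E_\rho \phi' = (E_\rho\phi)' + \rho E_\rho \phi$, and the classical intertwining $\mathcal{F}_0 \partial = \i\m \mathcal{F}_0$ applied to the Schwartz function $E_\rho\phi$ gives
\begin{equation*}
\mathcal{L}_\rho \phi' \;=\; \mathcal{F}_0 E_\rho \phi' \;=\; (\i\m+\rho)\,\mathcal{L}_\rho\phi.
\end{equation*}
Hence the closed operator $T \coloneqq \mathcal{L}_\rho^{\ast}(\i\m+\rho)\mathcal{L}_\rho$ extends the classical derivative on $C_c^\infty(\mathbb{R};H)$, and passing to the closure gives $\partial_{0,\rho} \subseteq T$. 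For the converse inclusion, I need $C_c^\infty(\mathbb{R};H)$ to be a core for $T$. Two unitary conjugations reduce this statement, first to $\mathcal{F}_0(C_c^\infty(\mathbb{R};H))$ being a core for $\i\m+\rho$ in $L_{2,0}(\mathbb{R};H)$, then (using $\mathcal{F}_0^{\ast}\m\mathcal{F}_0 = -\i\partial$ on $H^1$) to the classical density of $C_c^\infty(\mathbb{R};H)$ in $H^1(\mathbb{R};H)$, established by the standard cutoff-and-mollify argument (which transfers verbatim to the vector-valued setting).

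The formal computation on test functions and the unitarity of $\mathcal{L}_\rho$ are routine; the main obstacle is the core argument in the last step, since the image $\mathcal{F}_0(C_c^\infty(\mathbb{R};H))$ consists of Paley\textendash Wiener entire functions and is disjoint from $C_c^\infty$. Reducing through two Fourier layers to density in the Sobolev space $H^1$ is the cleanest way to circumvent this issue.
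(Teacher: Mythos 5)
Your proposal is correct. Bear in mind that the paper does not prove this proposition at all: it is quoted from \cite{KPSTW14_OD}, and the only in-text justification is the remark immediately following it, which obtains unitarity from precisely your factorisation, $\mathcal{L}_{\rho}=\mathcal{L}_{0}\exp(-\rho\m)$ (your $E_{\rho}$ is the paper's $\exp(-\rho\m)$). So on the unitarity part you coincide with the paper's sketch, while for the spectral representation $\partial_{0,\rho}=\mathcal{L}_{\rho}^{\ast}(\i\m+\rho)\mathcal{L}_{\rho}$ you supply an argument the paper delegates to the reference. That argument is sound: the computation $\mathcal{L}_{\rho}\phi'=(\i\m+\rho)\mathcal{L}_{\rho}\phi$ for $\phi\in C_{c}^{\infty}(\mathbb{R};H)$ shows that the closed operator $T\coloneqq\mathcal{L}_{\rho}^{\ast}(\i\m+\rho)\mathcal{L}_{\rho}$ extends $\partial_{0,\rho}$, and you correctly identify the only nontrivial point, namely that $C_{c}^{\infty}(\mathbb{R};H)$ is a core for $T$. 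Your reduction by unitary conjugation --- first to $\mathcal{F}_{0}[C_{c}^{\infty}(\mathbb{R};H)]$ being a core for $\i\m+\rho$, then via $\mathcal{F}_{0}^{\ast}\i\m\mathcal{F}_{0}=\partial$ on $H^{1}(\mathbb{R};H)$ to the density of $C_{c}^{\infty}(\mathbb{R};H)$ in $H^{1}(\mathbb{R};H)$ with respect to the graph norm --- is the standard route and is not circular: the identification of the maximal weak derivative with $\mathcal{F}_{0}^{\ast}\i\m\mathcal{F}_{0}$ and the cutoff-and-mollify density argument are independent of the statement being proved, and both transfer to the Bochner setting since mollification with scalar kernels works for $H$-valued functions. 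Equivalently, one could first settle the case $\rho=0$ and then conjugate with $E_{\rho}$, using $E_{\rho}\partial_{0,\rho}E_{\rho}^{\ast}=\partial_{0,0}+\rho$; this is only a cosmetic rearrangement of your argument.
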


\begin{rem}
In the case $\rho=0,$ $\mathcal{L}_{0}$ coincides with the usual
Fourier transform on $L_{2},$ which is unitary by Plancherel's Theorem.
For general $\rho$ we obtain $\mathcal{L}_{\rho}=\mathcal{L}_{0}\exp(-\rho\m),$
where the operator $\exp(-\rho\m)$ given by $\left(\exp(-\rho\m)f\right)(t)\coloneqq\exp(-\rho t)f(t)$
is obviously unitary from $L_{2,\rho}$ to $L_{2,0}.$ Hence, the
unitarity of $\mathcal{L}_{\rho}$ follows.
\end{rem}

Using the spectral representation for $\partial_{0,\rho}$, we can
easily show the following properties.
\begin{cor}
\label{cor:spadj}Let $\rho\in\mathbb{R}.$

\begin{enumerate}[(a)]

\item The spectrum of $\partial_{0,\rho}$ is given by $\sigma(\partial_{0,\rho})=\left\{ z\in\mathbb{C}\,;\,\Re z=\rho\right\} .$ 

\item The adjoint of $\partial_{0,\rho}$ is given by $\partial_{0,\rho}^{\ast}=-\partial_{0,\rho}+2\rho.$

\item If $\rho\ne0,$ then $\partial_{0,\rho}$ is continuously invertible
with
\[
\left(\partial_{0,\rho}^{-1}f\right)(t)=\begin{cases}
\intop_{-\infty}^{t}f(s)\,\mathrm{d}s & \mbox{ if }\rho>0,\\
-\intop_{t}^{\infty}f(s)\,\mathrm{d}s & \mbox{ if }\rho<0,
\end{cases}\quad(t\in\mathbb{R},f\in L_{2,\rho}(\mathbb{R};H)).
\]

\end{enumerate}
\end{cor}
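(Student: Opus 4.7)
The plan is to reduce everything to properties of the multiplication operator $\m$ via the spectral representation $\partial_{0,\rho}=\mathcal{L}_\rho^\ast(\i\m+\rho)\mathcal{L}_\rho$ from \prettyref{prop:FLtransform}, using the fact that $\mathcal{L}_\rho$ is unitary.

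For (a), since unitary conjugation preserves spectra, it suffices to compute $\sigma(\i\m+\rho)$. The operator $\m$ is selfadjoint on $L_{2,0}(\mathbb{R};H)$ with $\sigma(\m)=\mathbb{R}$ (indeed, $(\m-\lambda)^{-1}$ is bounded iff $\lambda\notin\mathbb{R}$, directly from the definition of $\m$ as multiplication by $t$). The affine transformation $z\mapsto \i z+\rho$ then maps $\mathbb{R}$ to the vertical line $\rho+\i\mathbb{R}$, which gives the claim.

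For (b), taking adjoints in the spectral representation yields $\partial_{0,\rho}^\ast=\mathcal{L}_\rho^\ast(-\i\m+\rho)\mathcal{L}_\rho$, because $\m$ is selfadjoint and $\mathcal{L}_\rho$ is unitary. On the other hand, $-\partial_{0,\rho}+2\rho=\mathcal{L}_\rho^\ast(-(\i\m+\rho)+2\rho)\mathcal{L}_\rho=\mathcal{L}_\rho^\ast(-\i\m+\rho)\mathcal{L}_\rho$, so the two operators coincide as equal spectral transforms.

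For (c), continuous invertibility is immediate from (a) since $0\notin\sigma(\partial_{0,\rho})$ when $\rho\ne0$. To identify the inverse with the integral operator $T$ given on the right-hand side, I would proceed in two steps. First, show that $T$ is bounded on $L_{2,\rho}(\mathbb{R};H)$: the unitary rescaling $U\colon L_{2,\rho}\to L_{2,0}$, $(Uf)(t)\coloneqq\exp(-\rho t)f(t)$, turns $T$ into convolution with the $L^1$-kernel $k(u)\coloneqq\exp(-\rho u)\mathbf{1}_{[0,\infty)}(u)$ when $\rho>0$ (and the reflected kernel when $\rho<0$), so Young's inequality yields $\|T\|\le1/|\rho|$. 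Second, verify $\partial_{0,\rho}Tf=f$ on the dense subspace $C_c^\infty(\mathbb{R};H)$: for such $f$, $Tf$ is continuously differentiable with classical derivative $f$, of compact support bounded below and exponentially bounded, so $Tf\in H_\rho^1(\mathbb{R};H)$ and $\partial_{0,\rho}Tf=f$. Since both $T$ and $\partial_{0,\rho}^{-1}$ are bounded and agree on a dense set, they agree everywhere. The only mildly delicate step is the boundedness estimate for $T$, but the convolution reformulation via $U$ handles it cleanly.
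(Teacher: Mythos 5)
Your proposal is correct and follows the route the paper itself indicates: the corollary is stated right after the remark that it follows ``using the spectral representation for $\partial_{0,\rho}$'' from \prettyref{prop:FLtransform}, and your parts (a) and (b) are exactly that unitary-conjugation argument, while your part (c) (invertibility from (a), then boundedness of the integral operator via Young's inequality after the rescaling $\exp(-\rho\m)$ and agreement with $\partial_{0,\rho}^{-1}$ on the dense set $C_{c}^{\infty}(\mathbb{R};H)$) is a clean and complete way to identify the inverse.
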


We conclude the section with the following variant of the Sobolev-embedding
theorem.
\begin{prop}[{\cite[Lemma 5.2]{KPSTW14_OD}}]
\label{prop:Sobolev}Let $\rho\in\mathbb{R}$ and define 
\[
C_{\rho,0}(\mathbb{R};H)\coloneqq\left\{ f:\mathbb{R}\to H\,;\,f\mbox{ continuous, }\,\lim_{t\to\pm\infty}f(t)\exp(-\rho t)=0\right\} ,
\]
equipped with the norm 
\[
|f|_{\rho,\infty}\coloneqq\sup\left\{ |f(t)|_{H}\exp(-\rho t);t\in\mathbb{R}\right\} 
\]
Then $H_{\rho}^{1}(\mathbb{R};H)\hookrightarrow C_{\rho,0}(\mathbb{R};H)$. 
\end{prop}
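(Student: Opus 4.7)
The plan is to reduce the embedding to a pointwise estimate on the dense subspace $C_c^\infty(\mathbb{R};H)$ and extend by continuity, exploiting that $\partial_{0,\rho}$ is by definition the closure of $\phi\mapsto\phi'$ on $C_c^\infty(\mathbb{R};H)$.

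First I would take $\phi\in C_c^\infty(\mathbb{R};H)$ and apply the fundamental theorem of calculus to the compactly supported scalar function $t\mapsto|\phi(t)|_H^2\exp(-2\rho t)$. Differentiating and integrating from $-\infty$ to $t$ yields
\[
|\phi(t)|_H^2\exp(-2\rho t)=\intop_{-\infty}^{t}\bigl(2\Re\langle\phi(s),\phi'(s)\rangle_H-2\rho|\phi(s)|_H^2\bigr)\exp(-2\rho s)\,\mathrm{d}s.
\]
Estimating the mixed term by $2ab\leq a^2+b^2$ and recognising the resulting integrals as squared $L_{2,\rho}$-norms gives a constant $C_\rho>0$, depending only on $\rho$, with
\[
|\phi|_{\rho,\infty}^2\leq C_\rho\bigl(\|\phi\|_\rho^2+\|\phi'\|_\rho^2\bigr)=C_\rho\|\phi\|_{\rho,1}^2.
\]

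For a general $f\in H_\rho^1(\mathbb{R};H)$ I would pick a sequence $(\phi_n)$ in $C_c^\infty(\mathbb{R};H)$ with $\phi_n\to f$ and $\phi_n'\to\partial_{0,\rho}f$ in $L_{2,\rho}(\mathbb{R};H)$, whose existence is built into the definition of $\partial_{0,\rho}$ as a graph closure. Applying the previous estimate to the differences $\phi_n-\phi_m$ shows that $(\exp(-\rho\cdot)\phi_n)$ is Cauchy in the Banach space of bounded continuous $H$-valued functions on $\mathbb{R}$, and therefore converges uniformly to a continuous function $g$. Since $L_{2,\rho}$-convergence of $\phi_n\to f$ implies pointwise almost everywhere convergence along a subsequence, the function $g$ agrees a.e.~with $t\mapsto f(t)\exp(-\rho t)$; this furnishes a continuous representative of $f$ satisfying $|f|_{\rho,\infty}\leq\sqrt{C_\rho}\,\|f\|_{\rho,1}$. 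Because each $\exp(-\rho\cdot)\phi_n$ has compact support, the uniform limit $g$ must vanish at $\pm\infty$, proving $f\in C_{\rho,0}(\mathbb{R};H)$.

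I do not expect a serious obstacle here; the main points to be careful about are bookkeeping of the $\rho$-dependent constant in the first step and the mild cross-term from differentiating $\exp(-2\rho t)$, and the observation that no separate density argument is needed because density of $C_c^\infty(\mathbb{R};H)$ in $H_\rho^1(\mathbb{R};H)$ is precisely the statement that $\partial_{0,\rho}$ is defined as a closure.
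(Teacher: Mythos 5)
Your argument is correct and complete: the pointwise bound
\[
|\phi(t)|_{H}^{2}\e^{-2\rho t}\leq\left(1+2|\rho|\right)\|\phi\|_{\rho}^{2}+\|\partial_{0,\rho}\phi\|_{\rho}^{2}
\]
follows exactly as you say from the fundamental theorem of calculus and $2ab\leq a^{2}+b^{2}$, the density of $C_{c}^{\infty}(\mathbb{R};H)$ in $H_{\rho}^{1}(\mathbb{R};H)$ is indeed built into the definition of $\partial_{0,\rho}$ as a closure, and the identification of the uniform limit with $t\mapsto f(t)\e^{-\rho t}$ via an a.e.\ convergent subsequence, together with the closedness of the space of functions vanishing at $\pm\infty$ under uniform convergence, gives both the continuous representative and the decay condition. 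Note that the paper itself does not prove this proposition; it only cites \cite[Lemma 5.2]{KPSTW14_OD}, so your proof supplies a self-contained version of what the authors delegate to that reference, using the standard Sobolev-embedding argument one would expect there.
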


In the next section, we recall the notion of regularity for differential-algebraic
equations, see e.g.~\cite{Berger2012,Markus1988}. Note that we shall
deviate slightly from the usual notion of regularity in as much as
we restrict our consideration to index 0, only. For differential-algebraic
equations with nontrivial index, we refer to the study initiated in
\cite{Trostorff2017}.

\section{Regular linear operator pencils\label{sec:Regular-linear-operator}}

Throughout this section, let $M_{0},M_{1}\in L(H).$ 
\begin{defn*}
We consider the function 
\begin{align*}
\mathcal{M}:\mathbb{C} & \to L(H)\\
z & \mapsto zM_{0}+M_{1}
\end{align*}
and call $\mathcal{M}$ the \emph{linear operator pencil associated
with }$\left(M_{0},M_{1}\right)$. We define the \emph{spectrum} of
$\mathcal{M}$ by 
\[
\sigma(\mathcal{M})\coloneqq\left\{ z\in\mathbb{C}\,;\,0\in\sigma(zM_{0}+M_{1})\right\} 
\]
and the \emph{resolvent set} of $\mathcal{M}$ by
\[
\varrho(\mathcal{M})\coloneqq\mathbb{C}\setminus\sigma(\mathcal{M}).
\]
We call $\mathcal{M}$ \emph{regular (of index 0)}, if there exists
$\nu\in\mathbb{R}$ such that:

\begin{enumerate}[(a)]

\item $\mathbb{C}_{\Re>\nu}\subseteq\varrho(\mathcal{M})$ and 

\item $\mathbb{C}_{\Re>\nu}\ni z\mapsto(zM_{0}+M_{1})^{-1}\in L(H)$
is bounded. 

\end{enumerate}

Moreover, we set 
\[
s_{0}(\mathcal{M})\coloneqq\inf\left\{ \nu\in\mathbb{R}\,;\,(a)\mbox{ and }(b)\mbox{ are satisfied}\right\} .
\]
\end{defn*}
\begin{example}
\label{exa:A-standard-example}A standard example for a regular linear
pencil is the following. Assume that $M_{0}$ is selfadjoint. Then
we can decompose the underlying space $H$ as $N(M_{0})\oplus\overline{R(M_{0})}$
by the projection theorem. Assume now that $M_{0}$ is strictly accretive
on $R(M_{0})$ and $M_{1}$ is strictly accretive on $N(M_{0})$,
i.e. there is $c>0$ such that 
\begin{align*}
\langle M_{0}x,x\rangle_{H} & \geq c|x|_{H}^{2}\\
\langle M_{1}y,y\rangle_{H} & \geq c|y|_{H}^{2}
\end{align*}
for each $x\in R(M_{0})$ and $y\in N(M_{0}).$ Then, the linear pencil
associated with $(M_{0},M_{1})$ is regular. Note that in the context
of partial differential equations the mentioned positive definiteness
conditions happen to be satisfied in many applications, see \cite[Section 3.3.3]{PicPhy}.
\end{example}

We note that in the latter example, the strict accretivity of $M_{0}$
on $R(M_{0})$ in particular implies that $R(M_{0})$ is closed. In
this case, we can characterise the regularity of the pencil by the
invertibility of a particular operator. For the next statement, we
introduce for a closed subspace $S\subseteq H$ the canonical embedding
\[
\iota_{S}\colon S\hookrightarrow H,s\mapsto s.
\]
It is easy to see that $\iota_{S}^{\ast}\colon H\to S$ is surjective
and acts as the orthogonal projection onto $S$, see also \cite[Lemma 3.2]{PTW15_FI}.
\begin{prop}
\label{prop:regPen}Assume that $R(M_{0})$ is closed and denote by
$\mathcal{M}$ the linear pencil associated with $(M_{0},M_{1}).$
Then the following statements are equivalent:

\begin{enumerate}[(i)]

\item $\mathcal{M}$ is regular,

\item $\iota_{R(M_{0})^{\bot}}^{\ast}M_{1}\iota_{N(M_{0})}:N(M_{0})\to R(M_{0})^{\bot}$
is continuously invertible.

\end{enumerate}

In the latter case, $\sigma(\mathcal{M})=\sigma\left(-\tilde{M}_{0}^{-1}\tilde{M}_{1}\right)$
is compact and for each $z\in\mathbb{C}$ we have that
\[
\mathcal{M}(z)=U_{1}^{\ast}V_{1}\left(\begin{array}{cc}
(z\tilde{M}_{0}+\tilde{M}_{1}) & 0\\
0 & \iota_{R(M_{0})^{\bot}}^{\ast}M_{1}\iota_{N(M_{0})}
\end{array}\right)V_{0}U_{0},
\]
where
\begin{align*}
U_{0}\coloneqq\left(\begin{array}{c}
\iota_{N(M_{0})^{\bot}}^{\ast}\\
\iota_{N(M_{0})}^{\ast}
\end{array}\right):H & \to N(M_{0})^{\bot}\oplus N(M_{0}),\\
U_{1}\coloneqq\left(\begin{array}{c}
\iota_{R(M_{0})}^{\ast}\\
\iota_{R(M_{0})^{\bot}}^{\ast}
\end{array}\right):H & \to R(M_{0})\oplus R(M_{0})^{\bot},
\end{align*}
and 
\begin{align*}
V_{0} & \coloneqq\left(\begin{array}{cc}
1 & 0\\
\left(\iota_{R(M_{0})^{\bot}}^{\ast}M_{1}\iota_{N(M_{0})}\right)^{-1}\left(\iota_{R(M_{0})^{\bot}}^{\ast}M_{1}\iota_{N(M_{0})^{\bot}}\right) & 1
\end{array}\right)\in L(N(M_{0})^{\bot}\oplus N(M_{0})),\\
V_{1} & \coloneqq\left(\begin{array}{cc}
1 & (\iota_{R(M_{0})}^{\ast}M_{1}\iota_{N(M_{0})})(\iota_{R(M_{0})^{\bot}}^{\ast}M_{1}\iota_{N(M_{0})})^{-1}\\
0 & 1
\end{array}\right)\in L(R(M_{0})\oplus R(M_{0})^{\bot}),
\end{align*}
as well as 
\begin{align*}
\tilde{M}_{0} & =\iota_{R(M_{0})}^{\ast}M_{0}\iota_{N(M_{0})^{\bot}},\\
\tilde{M}_{1} & =\iota_{R(M_{0})}^{\ast}M_{1}\iota_{N(M_{0})^{\bot}}\\
 & \quad-\left(\iota_{R(M_{0})}^{\ast}M_{1}\iota_{N(M_{0})}\right)\left(\iota_{R(M_{0})^{\bot}}^{\ast}M_{1}\iota_{N(M_{0})}\right)^{-1}\left(\iota_{R(M_{0})^{\bot}}^{\ast}M_{1}\iota_{N(M_{0})^{\bot}}\right).
\end{align*}
\end{prop}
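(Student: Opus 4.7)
The plan is to leverage the orthogonal decompositions $H=N(M_0)^{\bot}\oplus N(M_0)$ and $H=R(M_0)\oplus R(M_0)^{\bot}$ (direct sums because $R(M_0)$ is closed) to reduce the problem to a $2\times 2$ Schur-complement computation. With this setup, both $U_0$ and $U_1$ are unitary, and since $M_0\iota_{N(M_0)}=0$ and $\iota_{R(M_0)^{\bot}}^{\ast}M_0=0$, the only nonzero block of $U_1M_0U_0^{\ast}$ is the upper-left entry $\tilde M_0=\iota_{R(M_0)}^{\ast}M_0\iota_{N(M_0)^{\bot}}\colon N(M_0)^{\bot}\to R(M_0)$. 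This $\tilde M_0$ is injective by the definition of $N(M_0)^{\bot}$ and surjective because $R(M_0)$ is closed, hence continuously invertible by the open mapping theorem. Naming the four blocks of $U_1M_1U_0^{\ast}$ in the obvious way (with $D=\iota_{R(M_0)^{\bot}}^{\ast}M_1\iota_{N(M_0)}$ as the lower-right entry), we obtain
\[
U_1\mathcal{M}(z)U_0^{\ast}=\begin{pmatrix}z\tilde M_0+A & B\\ C & D\end{pmatrix}.
\]

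For (ii)$\Rightarrow$(i), assume $D$ is continuously invertible. A direct multiplication confirms
\[
\begin{pmatrix}z\tilde M_0+A & B\\ C & D\end{pmatrix}=V_1\begin{pmatrix}z\tilde M_0+\tilde M_1 & 0\\ 0 & D\end{pmatrix}V_0
\]
with $\tilde M_1=A-BD^{-1}C$, matching the formula in the statement. Since $V_0,V_1$ are unit-triangular and hence invertible in $L(H)$, $\mathcal{M}(z)$ is invertible exactly when $z\tilde M_0+\tilde M_1$ is. Using boundedness of $\tilde M_0^{-1}$, one factors $z\tilde M_0+\tilde M_1=\tilde M_0(z+\tilde M_0^{-1}\tilde M_1)$, so $\sigma(\mathcal{M})=\sigma(-\tilde M_0^{-1}\tilde M_1)$ is compact, and for $\Re z>\|\tilde M_0^{-1}\tilde M_1\|$ a Neumann series produces a uniformly bounded resolvent, giving regularity.

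For (i)$\Rightarrow$(ii), assume $\mathcal{M}$ is regular. Since $\tilde M_0^{-1}$ is bounded, $z\tilde M_0+A=\tilde M_0(z+\tilde M_0^{-1}A)$ is continuously invertible for all $|z|$ sufficiently large, with $\|(z\tilde M_0+A)^{-1}\|=O(|z|^{-1})$. The standard Schur-complement factorisation of a $2\times 2$ block matrix with invertible $(1,1)$-entry then exhibits the $(2,2)$-block of $U_0\mathcal{M}(z)^{-1}U_1^{\ast}$ as
\[
S(z)^{-1},\qquad S(z)\coloneqq D-C(z\tilde M_0+A)^{-1}B.
\]
Regularity of $\mathcal{M}$ supplies a uniform bound on $\|\mathcal{M}(z)^{-1}\|$, hence on $\|S(z)^{-1}\|$, for $\Re z>\nu'$ sufficiently large. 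Since $S(z)\to D$ in operator norm as $\Re z\to\infty$, one can pick $z$ with $\|S(z)-D\|\cdot\|S(z)^{-1}\|<1$, and then $D=S(z)\bigl(1+S(z)^{-1}(D-S(z))\bigr)$ is continuously invertible by Neumann series.

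The main obstacle I anticipate is the direction (i)$\Rightarrow$(ii): one has to convert a uniform bound on $\|\mathcal{M}(z)^{-1}\|$ along a right half-plane into invertibility of the single operator $D$, by correctly identifying the relevant block of the full resolvent and then passing to the limit $\Re z\to\infty$. The rest is careful bookkeeping with orthogonal projections, the Schur-complement algebra, and a routine application of the open mapping theorem to turn bijectivity of $\tilde M_0$ into continuous invertibility.
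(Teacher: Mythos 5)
Your proof is correct, and one half of it diverges from the paper in an interesting way. The implication (ii)$\Rightarrow$(i) is essentially the paper's own argument: the same block factorisation $U_1\mathcal{M}(z)U_0^{\ast}=V_1\operatorname{diag}\bigl(z\tilde M_0+\tilde M_1,\;\iota_{R(M_0)^{\bot}}^{\ast}M_1\iota_{N(M_0)}\bigr)V_0$, bounded invertibility of $\tilde M_0$, and a Neumann series on a right half-plane, which also yields $\sigma(\mathcal{M})=\sigma(-\tilde M_0^{-1}\tilde M_1)$ compact. For (i)$\Rightarrow$(ii), however, the paper argues quite differently: it sets $u_n=(nM_0+M_1)^{-1}\iota_{R(M_0)^{\bot}}f$, uses the uniform resolvent bound to extract a weakly convergent subsequence, identifies the weak limit as a preimage in $N(M_0)$ (surjectivity), proves injectivity by a similar $\tfrac1n$-decay computation, and then invokes the closed graph theorem. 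You instead perform the Schur-complement factorisation with respect to the $(1,1)$-block $z\tilde M_0+A$, which is invertible for large $|z|$ with $\lVert(z\tilde M_0+A)^{-1}\rVert=O(|z|^{-1})$ since $\tilde M_0$ is bijective independently of (ii); regularity then bounds the $(2,2)$-block $S(z)^{-1}$ of $U_0\mathcal{M}(z)^{-1}U_1^{\ast}$ uniformly, and since $S(z)\to D\coloneqq\iota_{R(M_0)^{\bot}}^{\ast}M_1\iota_{N(M_0)}$ in operator norm, a Neumann perturbation gives bounded invertibility of $D$ directly. Your route buys a fully quantitative argument that avoids weak sequential compactness and dispenses with the separate closed graph step (so it would survive in settings where weak compactness of bounded sets is unavailable), at the price of needing the half-plane resolvent bound rather than just boundedness of $(nM_0+M_1)^{-1}$ along a real sequence, which is all the paper's weak-limit argument uses.
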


\begin{proof}
(i) $\Rightarrow$ (ii): Assume that $\mathcal{M}$ is regular. By
the closed graph theorem, it suffices to prove that the operator $\iota_{R(M_{0})^{\bot}}^{\ast}M_{1}\iota_{N(M_{0})}$
is bijective. For showing that it is onto, let $f\in R(M_{0})^{\bot}.$
For $n\in\mathbb{N}$ sufficiently large we define
\[
u_{n}\coloneqq(nM_{0}+M_{1})^{-1}\iota_{R(M_{0})^{\bot}}f.
\]
Then, $(u_{n})_{n}$ is bounded and thus, by passing to a suitable
subsequence (not relabelled), we can assume that it is weakly convergent.
We denote its weak limit by $u$. Then 
\[
M_{0}u=\wlim_{n\to\infty}M_{0}u_{n}=\wlim_{n\to\infty}\frac{1}{n}(nM_{0}+M_{1})u_{n}=\wlim_{n\to\infty}\frac{1}{n}\iota_{R(M_{0})^{\bot}}f=0,
\]
i.e., $u\in N(M_{0}).$ Moreover, since 
\[
\iota_{R(M_{0})^{\bot}}^{\ast}M_{1}u_{n}=\iota_{R(M_{0})^{\bot}}^{\ast}(nM_{0}+M_{1})u_{n}=f
\]
for each $n\in\mathbb{N},$ we infer that 
\[
\iota_{R(M_{0})^{\bot}}^{\ast}M_{1}\iota_{N(M_{0})}u=f,
\]
which proves that $\iota_{R(M_{0})^{\bot}}^{\ast}M_{1}\iota_{N(M_{0})}$
is onto. To prove, that it is also one-to-one, let $u\in N(M_{0})$
with $\iota_{R(M_{0})^{\bot}}^{\ast}M_{1}\iota_{N(M_{0})}u=0.$ Consequently,
$M_{1}\iota_{N(M_{0})}u\in R(M_{0})$ and hence, there is $v\in H$
such that $M_{1}\iota_{N(M_{0})}u=M_{0}v.$ Since $(nM_{0}+M_{1})\iota_{N(M_{0})}u=M_{1}\iota_{N(M_{0})}u=M_{0}v$
for each $n\in\mathbb{N},$ we derive that 
\[
\iota_{N(M_{0})}u=(nM_{0}+M_{1})^{-1}M_{0}v=\frac{1}{n}v-\frac{1}{n}\left(nM_{0}+M_{1}\right)^{-1}M_{1}v
\]
for sufficiently large $n\in\mathbb{N}.$ As the right-hand side tends
to $0$ as $n$ tends to infinity, we infer $\iota_{N(M_{0})}u=0$
and thus, $u=0.$ 

(ii) $\Rightarrow$ (i): For $z\in\mathbb{C}$ we have that 
\begin{align*}
U_{1}\mathcal{M}(z)U_{0}^{\ast} & =\left(\begin{array}{cc}
\iota_{R(M_{0})}^{\ast}(zM_{0}+M_{1})\iota_{N(M_{0})^{\bot}} & \iota_{R(M_{0})}^{\ast}(zM_{0}+M_{1})\iota_{N(M_{0})}\\
\iota_{R(M_{0})^{\bot}}^{\ast}(zM_{0}+M_{1})\iota_{N(M_{0})^{\bot}} & \iota_{R(M_{0})^{\bot}}^{\ast}(zM_{0}+M_{1})\iota_{N(M_{0})}
\end{array}\right)\\
 & =\left(\begin{array}{cc}
\iota_{R(M_{0})}^{\ast}(zM_{0}+M_{1})\iota_{N(M_{0})^{\bot}} & \iota_{R(M_{0})}^{\ast}M_{1}\iota_{N(M_{0})}\\
\iota_{R(M_{0})^{\bot}}^{\ast}M_{1}\iota_{N(M_{0})^{\bot}} & \iota_{R(M_{0})^{\bot}}^{\ast}M_{1}\iota_{N(M_{0})}
\end{array}\right)\\
 & =V_{1}\left(\begin{array}{cc}
(z\tilde{M}_{0}+\tilde{M}_{1}) & 0\\
0 & \iota_{R(M_{0})^{\bot}}^{\ast}M_{1}\iota_{N(M_{0})}
\end{array}\right)V_{0}
\end{align*}
with $U_{0},U_{1},V_{0},V_{1},\tilde{M}_{0}$ and $\tilde{M}_{1}$
as above. Since $U_{0},U_{1}$ are unitary and $V_{0},V_{1}$ are
continuously invertible, we obtain using (ii) that $\mathcal{M}(z)$
is invertible, if and only if $(z\tilde{M}_{0}+\tilde{M}_{1})$ is
boundedly invertible. Moreover, there is a constant $C\geq0$ such
that for each $z\in\varrho(\mathcal{M})$ 
\[
\|\mathcal{M}(z)^{-1}\|\leq C\|(z\tilde{M}_{0}+\tilde{M}_{1})^{-1}\|.
\]
Since $\tilde{M}_{0}$ is bijective, it is boundedly invertible by
the closed graph theorem, and hence, 
\[
z\mapsto\left(z\tilde{M}_{0}+\tilde{M}_{1}\right)^{-1}=(z+\tilde{M}_{0}^{-1}\tilde{M}_{1})^{-1}\tilde{M}_{0}^{-1}
\]
is a well-defined bounded function on $\mathbb{C}_{\Re>\|\tilde{M}_{0}^{-1}\tilde{M}_{1}\|}$
by the Neumann series. In particular, $\sigma(\mathcal{M})=\sigma(-\tilde{M}_{0}^{-1}\tilde{M}_{1})$
is compact. 
\end{proof}
\begin{rem}
\label{rem:resolvent_bd}We note that from the representation of $\mathcal{M}(z)$
in \prettyref{prop:regPen} it follows that $\sup_{|z|>R}\|\mathcal{M}(z)^{-1}\|<\infty,$
where $R>0$ is sufficiently large. 
\end{rem}

\section{\label{sec:firstIV}A first initial value problem}

Throughout, let $M_{0},M_{1}\in L(H)$ such that $R(M_{0})$ is closed.
Moreover, we assume that the linear operator pencil $\mathcal{M}$
associated with $(M_{0},M_{1})$ is regular. In this section, we are
concerned with the following initial value problem 
\begin{align}
M_{0}u'(t)+M_{1}u(t) & =0\quad(t>0),\tag{IVP1}\label{eq:ivp_1}\\
u(0) & =u_{0},\nonumber 
\end{align}
where $u_{0}\in H$ is a given initial value and $u:\mathbb{R}_{\geq0}\to H$
is to be determined. We recall that \prettyref{eq:ivp_1} is a differential-algebraic
equation. The aim of this section is to find a characterisation of
possible initial values $u_{0}$ such that the solution of \prettyref{eq:ivp_1}
is \emph{continuously differentiable} on $\mathbb{R}_{>0}$. We present
a necessary condition for $u_{0}$ in the next statement. The main
result of this section will be to identify this necessary condition
also as sufficient, see \prettyref{cor:semigroupIVP} below. 
\begin{lem}
\label{lem:IV}If $u:\mathbb{R}_{\geq0}\to H$ is continuous, $u|_{\mathbb{R}_{>0}}$
is continuously differentiable and if $u$ satisfies \prettyref{eq:ivp_1},
then 
\[
u_{0}\in\mathrm{IV}\coloneqq\left\{ x\in H\,;\,M_{1}x\in R(M_{0})\right\} =M_{1}^{-1}\left[R(M_{0})\right].
\]
\end{lem}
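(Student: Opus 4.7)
The plan is to exploit the equation directly at positive times and then pass to the limit $t\to 0+$, using that $R(M_0)$ is closed.

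Specifically, I would first observe that from \prettyref{eq:ivp_1} applied for $t>0$, we have
\[
M_1 u(t) = -M_0 u'(t) \in R(M_0).
\]
Hence the curve $t\mapsto M_1 u(t)$ takes values in $R(M_0)$ on $\mathbb{R}_{>0}$.

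Next, since $u$ is assumed continuous at $0$, continuity of $M_1\in L(H)$ yields $M_1 u(t)\to M_1 u_0$ as $t\to 0+$. The closedness of $R(M_0)$ (one of our standing assumptions in this section) then forces
\[
M_1 u_0 \in R(M_0),
\]
that is, $u_0\in M_1^{-1}[R(M_0)]=\mathrm{IV}$, as claimed.

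There is no real obstacle: the only non-trivial input is the hypothesis that $R(M_0)$ is closed, without which the limit could only be placed in $\overline{R(M_0)}$. The continuous differentiability of $u$ on $\mathbb{R}_{>0}$ is used only to make sense of $M_0 u'(t)$ pointwise, so that the inclusion $M_1 u(t)\in R(M_0)$ holds for every $t>0$ rather than only almost everywhere.
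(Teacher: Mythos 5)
Your argument is exactly the paper's proof: use the equation to see $M_{1}u(t)=-M_{0}u'(t)\in R(M_{0})$ for $t>0$, then pass to the limit $t\to0+$ using continuity of $u$ and boundedness of $M_{1}$, and invoke the closedness of $R(M_{0})$ to conclude $M_{1}u_{0}\in R(M_{0})$. Correct, and essentially identical to the paper's argument.
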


\begin{proof}
Since we have 
\[
M_{1}u(t)=-M_{0}u'(t)\in R(M_{0})
\]
for each $t>0$, we infer 
\[
M_{1}u_{0}=M_{1}u(0)\in R(M_{0})
\]
due to the closedness of $R(M_{0}),$ i.e. $u_{0}\in\mathrm{IV}$.
\end{proof}
\begin{rem}
Note that $\mathrm{IV}$ is a closed subspace of $H,$ as it is the
pre-image of the closed subspace $R(M_{0})$ under $M_{1}.$
\end{rem}

For the converse of \prettyref{lem:IV}, we need a couple of preparations.
\begin{prop}
\label{prop:generator}The operator 
\[
\iota_{R(M_{0})}^{\ast}M_{0}\iota_{\mathrm{IV}}:\mathrm{IV}\to R(M_{0})
\]
is bijective and hence, continuously invertible.
\end{prop}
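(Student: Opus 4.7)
The plan is first to simplify the operator: since $M_{0}x\in R(M_{0})$ for every $x\in H$, the projection $\iota_{R(M_{0})}^{\ast}$ acts as the identity on the image, so $\iota_{R(M_{0})}^{\ast}M_{0}\iota_{\mathrm{IV}}$ is nothing but $x\mapsto M_{0}x$ with codomain restricted to $R(M_{0})$. The space $\mathrm{IV}=M_{1}^{-1}[R(M_{0})]$ is closed in $H$ (preimage of a closed subspace under a bounded operator) and $R(M_{0})$ is closed by assumption, so both are Hilbert spaces and the operator is bounded. By the bounded inverse theorem it therefore suffices to prove bijectivity. The entire argument will be run through the regularity characterisation in \prettyref{prop:regPen}, i.e.\ through the fact that
\[
T\coloneqq\iota_{R(M_{0})^{\bot}}^{\ast}M_{1}\iota_{N(M_{0})}\colon N(M_{0})\to R(M_{0})^{\bot}
\]
is continuously invertible.

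For injectivity I would take $x\in\mathrm{IV}$ with $M_{0}x=0$. Then $x\in N(M_{0})$, and $M_{1}x\in R(M_{0})$ means $\iota_{R(M_{0})^{\bot}}^{\ast}M_{1}x=0$. Writing $x=\iota_{N(M_{0})}y$ with $y\in N(M_{0})$, this says $Ty=0$, so invertibility of $T$ forces $y=0$ and hence $x=0$.

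For surjectivity I would pick $y\in R(M_{0})$ and choose $z\in H$ with $M_{0}z=y$. Decomposing $z=z_{1}+z_{0}$ along $H=N(M_{0})^{\bot}\oplus N(M_{0})$ we still have $M_{0}z_{1}=y$, so $z_{1}$ already lies in the correct $M_{0}$-fibre and only fails the constraint $M_{1}(\cdot)\in R(M_{0})$. Since adding an element of $N(M_{0})$ does not change $M_{0}z_{1}$, the task is to find $n\in N(M_{0})$ with $M_{1}(z_{1}+n)\in R(M_{0})$, equivalently
\[
Tn=-\iota_{R(M_{0})^{\bot}}^{\ast}M_{1}z_{1}.
\]
Invertibility of $T$ yields a unique such $n$, and $x\coloneqq z_{1}+n$ then lies in $\mathrm{IV}$ with $M_{0}x=y$.

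There is no real obstacle: the mathematical content has already been extracted into \prettyref{prop:regPen}, and what remains is essentially bookkeeping between the orthogonal decompositions $H=N(M_{0})^{\bot}\oplus N(M_{0})$ and $H=R(M_{0})\oplus R(M_{0})^{\bot}$. The only mild subtlety is making sure to adjust $z$ by an element of $N(M_{0})$ (not $N(M_{0})^{\bot}$) so as to preserve $M_{0}z=y$ while correcting the $R(M_{0})^{\bot}$-component of $M_{1}z$.
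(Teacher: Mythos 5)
Your argument is correct, but it takes a genuinely different route from the paper. You reduce everything to the implication (i) $\Rightarrow$ (ii) of \prettyref{prop:regPen}, i.e.\ to the continuous invertibility of $T=\iota_{R(M_{0})^{\bot}}^{\ast}M_{1}\iota_{N(M_{0})}$, and then do purely algebraic bookkeeping: injectivity becomes $\mathrm{IV}\cap N(M_{0})=\{0\}$, which is exactly injectivity of $T$; surjectivity becomes the solvability of $Tn=-\iota_{R(M_{0})^{\bot}}^{\ast}M_{1}z_{1}$, i.e.\ surjectivity of $T$, after the (correct) observation that one may adjust a preimage of $y$ under $M_{0}$ only within $N(M_{0})$. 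Since the closedness of $R(M_{0})$ and the regularity of $\mathcal{M}$ are standing assumptions in this section, and \prettyref{prop:regPen} is proved beforehand without reference to this proposition, there is no circularity. The paper instead argues directly from the definition of regularity: for injectivity it writes $x=(nM_{0}+M_{1})^{-1}M_{0}y=\frac{1}{n}y-\frac{1}{n}(nM_{0}+M_{1})^{-1}M_{1}y$ and uses the uniform resolvent bound to let $n\to\infty$; for surjectivity it sets $u_{n}=(nM_{0}+M_{1})^{-1}nw$, shows $(nM_{0}+M_{1})^{-1}[R(M_{0})]\subseteq\mathrm{IV}$ via an operator identity, and extracts a weak limit in the (weakly closed) subspace $\mathrm{IV}$ to produce a preimage. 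Your approach buys a shorter, compactness-free proof that in effect anticipates the graph description of $\mathrm{IV}$ over $N(M_{0})^{\bot}$ made explicit later in \prettyref{lem:IVchar}; the paper's approach buys an argument that only invokes the regularity of the pencil itself (through the bound on $(nM_{0}+M_{1})^{-1}$ for large real $n$), mirroring the technique already used in the proof of \prettyref{prop:regPen} rather than its statement.
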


\begin{proof}
To show that the operator is one-to-one, let $x\in\mathrm{IV}\cap N(M_{0}).$
Hence, there is $y\in H$ such that $M_{1}x=M_{0}y$ and consequently,
\[
\left(nM_{0}+M_{1}\right)x=M_{1}x=M_{0}y
\]
for each $n\in\mathbb{N}.$ Thus, for $n$ large enough, we get 
\[
x=(nM_{0}+M_{1})^{-1}M_{0}y=\frac{1}{n}y-\frac{1}{n}(nM_{0}+M_{1})^{-1}M_{1}y
\]
and since $\limsup_{n\to\infty}\left\Vert (nM_{0}+M_{1})^{-1}\right\Vert <\infty$,
we infer $x=0.$ Thus, $\iota_{R(M_{0})}^{\ast}M_{0}\iota_{\mathrm{IV}}$
is one-to-one. For showing that it is onto, let $w\in R(M_{0}).$
For $n\in\mathbb{N}$ large enough, we set 
\[
u_{n}\coloneqq\left(nM_{0}+M_{1}\right)^{-1}nw.
\]
We claim that $u_{n}\in\mathrm{IV}.$ Indeed, using that 
\begin{align*}
M_{1}\left(nM_{0}+M_{1}\right)^{-1}M_{0} & =\frac{1}{n}M_{1}\left(1-(nM_{0}+M_{1})^{-1}M_{1}\right)\\
 & =\frac{1}{n}M_{1}-\frac{1}{n}(M_{1}-nM_{0}(nM_{0}+M_{1})^{-1}M_{1})\\
 & =M_{0}\left(nM_{0}+M_{1}\right)^{-1}M_{1},
\end{align*}
we infer that 
\[
(nM_{0}+M_{1})^{-1}[R(M_{0})]\subseteq\mathrm{IV}.
\]
Since $nw\in R(M_{0}),$ we derive the claim. Moreover, we find $y\in H$
such that $w=M_{0}y$ and we compute for all $n\in\mathbb{N}$ 
\[
u_{n}=(nM_{0}+M_{1})^{-1}nw=(nM_{0}+M_{1})^{-1}nM_{0}y=y-(nM_{0}+M_{1})^{-1}M_{1}y,
\]
which shows that $(u_{n})_{n\in\mathbb{N}}$ is a bounded sequence
in $\mathrm{IV}\subseteq H$ and hence, by passing to a suitable subsequence,
we may assume without loss of generality that $u_{n}\rightharpoonup u$
for some $u\in\mathrm{IV}.$ We obtain 
\[
M_{0}\iota_{\mathrm{IV}}u=\wlim_{n\to\infty}M_{0}u_{n}=\wlim_{n\to\infty}\frac{1}{n}(nM_{0}+M_{1})u_{n}=w,
\]
which proves the assertion. 
\end{proof}
\begin{rem}
\label{rem:IVcongNbot}We shall note here that $\textnormal{IV}$
and $N(M_{0})^{\bot}$ are isomorphic as Banach spaces. Indeed, $\iota_{R(M_{0})}^{\ast}M_{0}\iota_{N(M_{0})^{\bot}}$
is bijective and closed, and hence, a Banach space isomorphism. By
\prettyref{prop:generator}, $\iota_{R(M_{0})}^{\ast}M_{0}\iota_{\textnormal{IV}}$
is a Banach space isomorphism, as well. So,
\[
\left(\iota_{R(M_{0})}^{\ast}M_{0}\iota_{\textnormal{IV}}\right)^{-1}\iota_{R(M_{0})}^{\ast}M_{0}\iota_{N(M_{0})^{\bot}}\colon N(M_{0})^{\bot}\to\textnormal{IV}
\]
yields the desired isomorphism.
\end{rem}

\begin{cor}
\label{cor:semigroupIVP}Let $u_{0}\in\mathrm{IV}.$ Then there exists
a unique solution $u:\mathbb{R}_{\geq0}\to H$ of \prettyref{eq:ivp_1},
which is given by 
\[
u(t)=\exp\left(-t\left(\iota_{R(M_{0})}^{\ast}M_{0}\iota_{\mathrm{IV}}\right)^{-1}\left(\iota_{R(M_{0})}^{\ast}M_{1}\iota_{\mathrm{IV}}\right)\right)u_{0}\quad(t\geq0).
\]
\end{cor}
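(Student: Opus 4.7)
The approach is to realise the candidate $u(t)=\exp(-tA)u_{0}$ as the solution of a genuine bounded-operator ODE on the closed subspace $\mathrm{IV}\subseteq H$, and then to leverage standard uniqueness for such ODEs.

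First, set $A\coloneqq\left(\iota_{R(M_{0})}^{\ast}M_{0}\iota_{\mathrm{IV}}\right)^{-1}\left(\iota_{R(M_{0})}^{\ast}M_{1}\iota_{\mathrm{IV}}\right)$. By \prettyref{prop:generator} the first factor is a Banach-space isomorphism from $\mathrm{IV}$ onto $R(M_{0})$, while the second factor is plainly bounded, so $A\in L(\mathrm{IV})$. Hence $\exp(-tA)$ defines a norm-continuous one-parameter group on the Banach space $\mathrm{IV}$, and $u(t)\coloneqq\exp(-tA)u_{0}$ is analytic in $t$ with $u(t),\,u'(t)=-Au(t)\in\mathrm{IV}$ for every $t\in\mathbb{R}$. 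To check $M_{0}u'(t)+M_{1}u(t)=0$, observe that for every $y\in\mathrm{IV}$ both $M_{0}y$ and $M_{1}y$ lie in $R(M_{0})$ (the latter by the very definition of $\mathrm{IV}$), so the orthogonal projection $\iota_{R(M_{0})}\iota_{R(M_{0})}^{\ast}$ acts as the identity on them, i.e.\ $\iota_{R(M_{0})}\iota_{R(M_{0})}^{\ast}M_{j}\iota_{\mathrm{IV}}=M_{j}\iota_{\mathrm{IV}}$ for $j\in\{0,1\}$. Applied to $y=Au(t)\in\mathrm{IV}$ and $y=u(t)$, and using the defining relation for $A$, we obtain
\[
M_{0}u'(t)=-\iota_{R(M_{0})}\iota_{R(M_{0})}^{\ast}M_{0}\iota_{\mathrm{IV}}Au(t)=-\iota_{R(M_{0})}\iota_{R(M_{0})}^{\ast}M_{1}\iota_{\mathrm{IV}}u(t)=-M_{1}u(t),
\]
while $u(0)=u_{0}$ is immediate.

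For uniqueness, let $u$ be any solution in the sense of \prettyref{lem:IV}. Applied pointwise in $t\geq0$, that lemma yields $u(t)\in\mathrm{IV}$ for every $t$; since $\mathrm{IV}$ is closed in $H$, difference quotients of $u$ remain in $\mathrm{IV}$ and hence $u'(t)\in\mathrm{IV}$ for all $t>0$. Applying $\iota_{R(M_{0})}^{\ast}$ to \prettyref{eq:ivp_1} and exploiting that $\iota_{R(M_{0})}^{\ast}M_{0}\iota_{\mathrm{IV}}$ is a Banach-space isomorphism (\prettyref{prop:generator}), the equation reduces to the classical Cauchy problem $u'(t)+Au(t)=0$ on $\mathrm{IV}$ with $u(0)=u_{0}$ and bounded generator $A$, whose unique solution is $\exp(-tA)u_{0}$.

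The most delicate step is the bookkeeping with the embeddings $\iota_{\mathrm{IV}}$ and $\iota_{R(M_{0})}$: namely, the two projection identities $\iota_{R(M_{0})}\iota_{R(M_{0})}^{\ast}M_{j}\iota_{\mathrm{IV}}=M_{j}\iota_{\mathrm{IV}}$ used to push the $\mathrm{IV}$-valued ODE back to the original $H$-valued equation, and the corresponding reduction used for uniqueness. Once these are in place, \prettyref{prop:generator} and classical semigroup theory for bounded generators deliver everything.
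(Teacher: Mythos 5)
Your proposal is correct and follows essentially the same route as the paper: solutions are forced into $\mathrm{IV}$ (via \prettyref{lem:IV} and closedness of $\mathrm{IV}$), \prettyref{prop:generator} is used to invert $\iota_{R(M_{0})}^{\ast}M_{0}\iota_{\mathrm{IV}}$ and reduce \prettyref{eq:ivp_1} to the bounded-generator Cauchy problem $u'=-Au$ on $\mathrm{IV}$, and the exponential gives existence and uniqueness. Your explicit projection identities $\iota_{R(M_{0})}\iota_{R(M_{0})}^{\ast}M_{j}\iota_{\mathrm{IV}}=M_{j}\iota_{\mathrm{IV}}$ and the difference-quotient argument for $u'(t)\in\mathrm{IV}$ merely spell out steps the paper leaves implicit.
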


\begin{proof}
We address uniqueness first: If $u:\mathbb{R}_{\geq0}\to H$ is a
solution of \prettyref{eq:ivp_1}, then $M_{1}u(t)=-M_{0}u'(t)\in R(M_{0})$.
Hence, $u$ attains values in $\mathrm{IV}.$ Thus, equivalently,
\[
\iota_{R(M_{0})}^{\ast}M_{1}\iota_{\mathrm{IV}}u(t)=-\iota_{R(M_{0})}^{\ast}M_{0}\iota_{\mathrm{IV}}u'(t)
\]
and by \prettyref{prop:generator} 
\[
u'(t)=-\left(\iota_{R(M_{0})}^{\ast}M_{0}\iota_{\mathrm{IV}}\right)^{-1}\left(\iota_{R(M_{0})}^{\ast}M_{1}\iota_{\mathrm{IV}}\right)u(t)
\]
for $t>0.$ Hence, 
\[
u(t)=\exp\left(-t\left(\iota_{R(M_{0})}^{\ast}M_{0}\iota_{\mathrm{IV}}\right)^{-1}\left(\iota_{R(M_{0})}^{\ast}M_{1}\iota_{\mathrm{IV}}\right)\right)u_{0}\quad(t\geq0).
\]
For the existence part consider 
\begin{align*}
\mathbb{R}_{\geq0} & \ni t\mapsto u(t)=\exp\left(-t\left(\iota_{R(M_{0})}^{\ast}M_{0}\iota_{\mathrm{IV}}\right)^{-1}\left(\iota_{R(M_{0})}^{\ast}M_{1}\iota_{\mathrm{IV}}\right)\right)u_{0}.
\end{align*}
Then clearly $u(0)=u_{0}$ and 
\[
u'(t)=-\left(\iota_{R(M_{0})}^{\ast}M_{0}\iota_{\mathrm{IV}}\right)^{-1}\left(\iota_{R(M_{0})}^{\ast}M_{1}\iota_{\mathrm{IV}}\right)u(t)
\]
which in turn implies 
\[
M_{0}u'(t)+M_{1}u(t)=0
\]
for each $t>0.$ Hence, $u$ solves \prettyref{eq:ivp_1}.
\end{proof}

\section{\label{sec:secondIV}A second initial value problem}

In this section, we aim at providing a different perspective to the
initial value problem stated in \prettyref{eq:ivp_1}. In the previous
section, we have addressed finding solutions $u$ in the ``strong
sense'', that is, we were looking for continuously differentiable
solutions. The differentiability, in turn, restricted the class of
admissible initial values. Here, we study the solvability of the differential-algebraic
equation in a ``weak'' or ``mild'' sense. Weakening the solution
concepts, we will be able to solve the differential-algebraic equation
for \emph{all} initial data from $H$. For this, we provide a different
solution representation of the solution to \prettyref{eq:ivp_1}. 

Any function defined on $\mathbb{R}_{\geq0}$ is considered to be
a function on $\mathbb{R}$ by extension by $0$. The support of a
function $f$ will be denoted by $\spt f$.
\begin{prop}
\label{prop:LaplaceRep}Let $u_{0}\in\mathrm{IV}$ and $u:\mathbb{R}_{\geq0}\to H$
the solution of \prettyref{eq:ivp_1}. Then there is $\rho_{0}\geq0$
such that $u\in\bigcap_{\rho>\rho_{0}}L_{2,\rho}(\mathbb{R};H)$ and
\[
\left(\mathcal{L}_{\rho}u\right)(t)=\frac{1}{\sqrt{2\pi}}\left((\i t+\rho)M_{0}+M_{1}\right)^{-1}M_{0}u_{0}\quad(\rho>\rho_{0},t\in\mathbb{R}).
\]
\end{prop}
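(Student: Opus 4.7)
The plan is to use the explicit semigroup representation from \prettyref{cor:semigroupIVP}. Set $B\coloneqq\bigl(\iota_{R(M_{0})}^{\ast}M_{0}\iota_{\mathrm{IV}}\bigr)^{-1}\bigl(\iota_{R(M_{0})}^{\ast}M_{1}\iota_{\mathrm{IV}}\bigr)\in L(\mathrm{IV})$. Then the solution $u$ reads $u(t)=\iota_{\mathrm{IV}}e^{-tB}u_{0}$ for $t\geq0$, extended by $0$ for $t<0$. Since $B$ is bounded, $|u(t)|_{H}\leq e^{t\|B\|}|u_{0}|_{H}$, so $u$ is continuous with support bounded below and of exponential growth. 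By the remark following the definition of $L_{2,\rho}$, this gives $u\in L_{2,\rho}(\mathbb{R};H)$ for every $\rho>\rho_{0}\coloneqq\max\{\|B\|,s_{0}(\mathcal{M})\}$; the second quantity is included merely to ensure that $(zM_{0}+M_{1})^{-1}$ exists on the line $\Re z=\rho$ (possible by \prettyref{rem:resolvent_bd}).

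Fix such a $\rho$ and set $z\coloneqq\i t+\rho$. The function $s\mapsto e^{-\rho s}u(s)$ lies in $L_{1}(\mathbb{R};H)\cap L_{2}(\mathbb{R};H)$, hence the factorisation $\mathcal{L}_{\rho}=\mathcal{L}_{0}\exp(-\rho\m)$ combined with the fact that the unitary $L_{2}$-Fourier transform coincides with the integral formula on $L_{1}\cap L_{2}$ yields
\[
(\mathcal{L}_{\rho}u)(t)=\frac{1}{\sqrt{2\pi}}\int_{0}^{\infty}e^{-zs}\iota_{\mathrm{IV}}e^{-sB}u_{0}\,\mathrm{d}s=\frac{1}{\sqrt{2\pi}}\iota_{\mathrm{IV}}(z+B)^{-1}u_{0},
\]
where the last equality holds because $\|e^{-s(z+B)}\|\leq e^{-s(\rho-\|B\|)}$, so the integral converges in operator norm to $(z+B)^{-1}$ (note $-z\in\varrho(B)$ since $\rho>\|B\|\geq r(B)$).

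It remains to identify $\iota_{\mathrm{IV}}(z+B)^{-1}u_{0}$ with $(zM_{0}+M_{1})^{-1}M_{0}u_{0}$. Let $x\coloneqq(zM_{0}+M_{1})^{-1}M_{0}u_{0}\in H$. Rewriting
\[
M_{1}x=(zM_{0}+M_{1})x-zM_{0}x=M_{0}(u_{0}-zx)\in R(M_{0}),
\]
one sees $x\in\mathrm{IV}$, so $x=\iota_{\mathrm{IV}}\tilde{x}$. Applying $\iota_{R(M_{0})}^{\ast}$ to $(zM_{0}+M_{1})\iota_{\mathrm{IV}}\tilde{x}=M_{0}\iota_{\mathrm{IV}}u_{0}$ gives, after expanding the left-hand side using the definition of $B$,
\[
\bigl(\iota_{R(M_{0})}^{\ast}M_{0}\iota_{\mathrm{IV}}\bigr)(z+B)\tilde{x}=\bigl(\iota_{R(M_{0})}^{\ast}M_{0}\iota_{\mathrm{IV}}\bigr)u_{0}.
\]
Since $\iota_{R(M_{0})}^{\ast}M_{0}\iota_{\mathrm{IV}}$ is invertible by \prettyref{prop:generator}, $\tilde{x}=(z+B)^{-1}u_{0}$, concluding the proof.

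The main obstacle is the middle step: the Fourier\textendash Laplace transform is \emph{defined} only as the closure of the integral formula on $C_{c}^{\infty}(\mathbb{R};H)$, so evaluating $\mathcal{L}_{\rho}u$ for our (non-compactly supported) $u$ by the pointwise integral needs justification; this is handled by the indicated $L_{1}\cap L_{2}$ reduction via the factorisation $\mathcal{L}_{\rho}=\mathcal{L}_{0}\exp(-\rho\m)$. The rest is essentially an algebraic resolvent identity enabled by $\iota_{R(M_{0})}^{\ast}M_{0}\iota_{\mathrm{IV}}$ being an isomorphism.
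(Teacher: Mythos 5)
Your proof is correct, but it takes a genuinely different route from the paper's. Both arguments start from the semigroup representation of \prettyref{cor:semigroupIVP} to get the exponential bound and hence $u\in L_{2,\rho}(\mathbb{R};H)$; they diverge at the computation of $\mathcal{L}_{\rho}u$. The paper never evaluates $\mathcal{L}_{\rho}u$ by a pointwise integral: it first shows $u-\chi_{\mathbb{R}_{\geq0}}u_{0}\in H_{\rho}^{1}(\mathbb{R};H)$ with $\partial_{0,\rho}(u-\chi_{\mathbb{R}_{\geq0}}u_{0})=u'$ (integration by parts against test functions, using $\partial_{0,\rho}^{\ast}=-\partial_{0,\rho}+2\rho$ from \prettyref{cor:spadj}), then applies the operator identity $\partial_{0,\rho}=\mathcal{L}_{\rho}^{\ast}(\i\m+\rho)\mathcal{L}_{\rho}$ of \prettyref{prop:FLtransform} together with the equation $M_{0}u'+M_{1}u=0$ to obtain $\left((\i t+\rho)M_{0}+M_{1}\right)(\mathcal{L}_{\rho}u)(t)=\tfrac{1}{\sqrt{2\pi}}M_{0}u_{0}$, and inverts. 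You instead compute $\mathcal{L}_{\rho}u$ directly as the operator-valued Laplace transform of the norm-continuous semigroup $\e^{-tB}$ (justified by the $L_{1}\cap L_{2}$ reduction through $\mathcal{L}_{\rho}=\mathcal{L}_{0}\exp(-\rho\m)$), and then prove the resolvent identity $\iota_{\mathrm{IV}}(z+B)^{-1}u_{0}=(zM_{0}+M_{1})^{-1}M_{0}u_{0}$ via the injectivity of $\iota_{R(M_{0})}^{\ast}M_{0}\iota_{\mathrm{IV}}$ from \prettyref{prop:generator}; this identity is a nice $\mathrm{IV}$-space counterpart of \prettyref{lem:refregPen}, which the paper only establishes later on $N(M_{0})^{\bot}$. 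What the paper's route buys is that it stays entirely within the unitary calculus of $\mathcal{L}_{\rho}$ (no appeal to the coincidence of the Plancherel transform with the integral formula on $L_{1}\cap L_{2}$, a standard fact you correctly flag and which does hold for $H$-valued functions) and it yields the regularity statement $u-\chi_{\mathbb{R}_{\geq0}}u_{0}\in H_{\rho}^{1}(\mathbb{R};H)$ as a by-product, in the same spirit as \prettyref{thm:IVP_2}; what your route buys is a more elementary, self-contained computation and an explicit link between $\mathcal{M}(z)^{-1}M_{0}$ and the resolvent of the generator $B$ on $\mathrm{IV}$. Your minor citation of \prettyref{rem:resolvent_bd} is not quite the right justification for invertibility of $(\i t+\rho)M_{0}+M_{1}$ on the whole line $\Re z=\rho$; the correct reason is simply $\rho>s_{0}(\mathcal{M})$, which you have already built into $\rho_{0}$.
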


\begin{proof}
Since by \prettyref{cor:semigroupIVP}
\[
u(t)=\exp\left(-t\left(\iota_{R(M_{0})}^{\ast}M_{0}\iota_{\mathrm{IV}}\right)^{-1}\left(\iota_{R(M_{0})}^{\ast}M_{1}\iota_{\mathrm{IV}}\right)\right)u_{0}
\]
for $t\geq0$, we infer that $|u(t)|\leq M\e^{\rho_{1}t}$ for some
$M\geq1,\rho_{1}\in\mathbb{R}$ and all $t\geq0.$ Hence, using that
$\spt u\subseteq\mathbb{R}_{\geq0}$, we obtain $u\in\bigcap_{\rho>\rho_{1}}L_{2,\rho}(\mathbb{R};H).$
We define $\rho_{0}\coloneqq\max\{0,\rho_{1},s_{0}(\mathcal{M})\}$
and claim that 
\[
u-\chi_{\mathbb{R}_{\geq0}}u_{0}\in\bigcap_{\rho>\rho_{0}}H_{\rho}^{1}(\mathbb{R};H).
\]
Indeed, for $\rho>\rho_{0}$ we have that $u-\chi_{\mathbb{R}_{\geq0}}u_{0}\in L_{2,\rho}(\mathbb{R};H)$
and for $\varphi\in C_{c}^{\infty}(\mathbb{R};H)$ we compute using
\prettyref{cor:spadj} 
\begin{align*}
\langle u-\chi_{\mathbb{R}_{\geq0}}u_{0},\partial_{0,\rho}^{\ast}\varphi\rangle_{\rho} & =\intop_{0}^{\infty}\langle u(t)-u_{0},-\varphi'(t)+2\rho\varphi(t)\rangle_{H}\e^{-2\rho t}\mbox{ d}t\\
 & =\intop_{0}^{\infty}\langle u'(t),\varphi(t)\rangle_{H}\e^{-2\rho t}\mbox{ d}t\\
 & =\langle u',\varphi\rangle_{\rho}
\end{align*}
by integration by parts, and thus $u-\chi_{\mathbb{R}_{\geq0}}u_{0}\in H_{\rho}^{1}(\mathbb{R};H)$
with $\partial_{0,\rho}(u-\chi_{\mathbb{R}_{\geq0}}u_{0})=u'.$ Hence,
we compute with \prettyref{prop:FLtransform} 
\begin{align*}
 & \left((\i t+\rho)M_{0}+M_{1}\right)\left(\mathcal{L}_{\rho}u\right)(t)\\
 & =(\i t+\rho)M_{0}\left(\mathcal{L}_{\rho}(u-\chi_{\mathbb{R}_{\geq0}}u_{0})\right)(t)+M_{1}\left(\mathcal{L}_{\rho}u\right)(t)+(\i t+\rho)M_{0}\left(\mathcal{L}_{\rho}\chi_{\mathbb{R}_{\geq0}}u_{0}\right)(t)\\
 & =M_{0}\mathcal{L}_{\rho}\left(\partial_{0,\rho}(u-\chi_{\mathbb{R}_{\geq0}}u_{0})\right)(t)+M_{1}\left(\mathcal{L}_{\rho}u\right)(t)+\frac{1}{\sqrt{2\pi}}M_{0}u_{0}\\
 & =\mathcal{L}_{\rho}\left(M_{0}u'+M_{1}u\right)(t)+\frac{1}{\sqrt{2\pi}}M_{0}u_{0}\\
 & =\frac{1}{\sqrt{2\pi}}M_{0}u_{0},
\end{align*}
from which we read off 
\[
\left(\mathcal{L}_{\rho}u\right)(t)=\frac{1}{\sqrt{2\pi}}\left((\i t+\rho)M_{0}+M_{1}\right)^{-1}M_{0}u_{0}
\]
for each $t\in\mathbb{R}.$ 
\end{proof}
As we have seen in \prettyref{prop:LaplaceRep}, the solution $u$
of \prettyref{eq:ivp_1} is given by the relation 
\[
\left(\mathcal{L}_{\rho}u\right)(t)=\frac{1}{\sqrt{2\pi}}\left((\i t+\rho)M_{0}+M_{1}\right)^{-1}M_{0}u_{0}\quad(t\in\mathbb{R})
\]
for some $\rho>0$ large enough and $u_{0}\in\mathrm{IV}.$ Thus,
we obtain the following consequence of \prettyref{cor:semigroupIVP}
and \prettyref{prop:LaplaceRep}:
\begin{cor}
\label{cor:IVsemLapl} Let $u_{0}\in\mathrm{IV}$. Then for $\rho>0$
large enough we get
\begin{multline*}
\chi_{\mathbb{R}_{\geq0}}(t)\exp\left(-t\left(\iota_{R(M_{0})}^{\ast}M_{0}\iota_{\mathrm{IV}}\right)^{-1}\iota_{R(M_{0})}^{\ast}M_{1}\iota_{\mathrm{IV}}\right)u_{0}\\
=\frac{1}{\sqrt{2\pi}}\left(\mathcal{L}_{\rho}^{\ast}\left((\i\cdot+\rho)M_{0}+M_{1}\right)^{-1}M_{0}u_{0}\right)(t)\quad(t\in\mathbb{R}).
\end{multline*}
 
\end{cor}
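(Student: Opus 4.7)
The plan is essentially to concatenate the two preceding results. Both \prettyref{cor:semigroupIVP} and \prettyref{prop:LaplaceRep} describe the same object, namely the unique solution $u$ of \prettyref{eq:ivp_1} with initial datum $u_0\in\mathrm{IV}$, extended by zero to the whole real line as agreed at the beginning of the section. The corollary gives a pointwise semigroup expression for $u(t)$, while the proposition identifies $\mathcal{L}_\rho u$ as the resolvent-type function appearing on the right-hand side.

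Concretely, I would start from the solution
\[
u(t)=\chi_{\mathbb{R}_{\geq0}}(t)\exp\!\left(-t\left(\iota_{R(M_{0})}^{\ast}M_{0}\iota_{\mathrm{IV}}\right)^{-1}\iota_{R(M_{0})}^{\ast}M_{1}\iota_{\mathrm{IV}}\right)u_{0}
\]
provided by \prettyref{cor:semigroupIVP} (with extension by zero on the negative half-line). By \prettyref{prop:LaplaceRep}, there exists $\rho_0\geq 0$ such that for every $\rho>\rho_0$ one has $u\in L_{2,\rho}(\mathbb{R};H)$ and
\[
(\mathcal{L}_\rho u)(t)=\frac{1}{\sqrt{2\pi}}\bigl((\i t+\rho)M_0+M_1\bigr)^{-1}M_0 u_0\qquad(t\in\mathbb{R}).
\]
Now I would simply apply $\mathcal{L}_\rho^\ast$ to both sides and use unitarity of $\mathcal{L}_\rho$ from \prettyref{prop:FLtransform}, which gives $u=\mathcal{L}_\rho^\ast\mathcal{L}_\rho u$ on the left and the claimed right-hand side on the right. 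Choosing $\rho$ larger than the $\rho_0$ supplied by \prettyref{prop:LaplaceRep} is enough for the identity to make sense.

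There is essentially no obstacle here: the only point to verify is that the $\rho$'s can be chosen consistently, i.e.\ that the $\rho_0$ from \prettyref{prop:LaplaceRep} is compatible with the "$\rho$ large enough" in the statement, which is immediate since the proposition delivers a concrete such $\rho_0=\max\{0,\rho_1,s_0(\mathcal{M})\}$ and the claim only requires some $\rho$ above it. Thus the corollary follows by a one-line combination of the two previous results.
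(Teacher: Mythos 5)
Your argument is exactly the paper's: the corollary is stated there without a separate proof, being presented as an immediate consequence of Corollary \ref{cor:semigroupIVP} and Proposition \ref{prop:LaplaceRep}, obtained precisely by applying $\mathcal{L}_{\rho}^{\ast}$ and using its unitarity for $\rho$ above the $\rho_{0}$ from Proposition \ref{prop:LaplaceRep}. Your proposal is correct and matches this one-line combination.
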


Note that the right-hand side of the equation in \prettyref{cor:IVsemLapl}
also makes sense for $u_{0}\in H$. Indeed, if we just require $u_{0}\in H,$
then $u$ given as above solves a weaker variant of the initial value
problem \prettyref{eq:ivp_1}: We introduce the respective notion
first:
\begin{defn}
Let $u_{0}\in H$, and $u\colon\mathbb{R}_{\geq0}\to H$ locally integrable.
Then $u$ is a \emph{mild solution of \prettyref{eq:ivp_1},} if $u$
satisfies 
\begin{align}
\left(M_{0}u\right)(t)+\int_{0}^{t}M_{1}u(s)\,\mathrm{d}s & =M_{0}u_{0}\quad(t>0),\nonumber \\
\left(M_{0}u\right)(0+) & =M_{0}u_{0}.\tag{{IVP2}}\label{eq:mIVP}
\end{align}
Note that, if $u$ is a solution to \prettyref{eq:mIVP}, then it
follows from Lebesgue's dominated convergence theorem that $M_{0}u\colon\mathbb{R}_{\geq0}\to H$
is continuous and that the initial datum is attained. 
\end{defn}

\begin{thm}
\label{thm:IVP_2}Let $u\in L_{1,\mathrm{loc}}(\mathbb{R};H)$. Then
the following are equivalent

\begin{enumerate}[(i)]

\item $\spt u\subseteq\mathbb{R}_{\geq0}$ and $u$ satisfies 
\begin{align*}
(M_{0}u)'(t)+M_{1}u(t) & =0\quad(t>0\mbox{ a.e.}),\\
\lim_{t\to0+}\frac{1}{t}\intop_{0}^{t}|M_{0}\left(u(s)-u_{0}\right)|\:\mathrm{d}s & =0
\end{align*}
in the sense of distributions, i.e. 
\[
-\intop_{0}^{\infty}\langle M_{0}u(t),\varphi'(t)\rangle_{H}\,\mathrm{d}t+\intop_{0}^{\infty}\langle M_{1}u(t),\varphi(t)\rangle_{H}\mbox{ d}t=0
\]
for each $\varphi\in C_{c}^{\infty}(\mathbb{R}_{>0};H).$

\item $\spt u\subseteq\mathbb{R}_{\geq0}$ and 
\begin{equation}
M_{0}u(t)+\intop_{0}^{t}M_{1}u(s)\,\mathrm{d}s=M_{0}u_{0}\quad(t>0\mbox{ a.e.})\label{eq:IVP_2}
\end{equation}
i.e., $u$ is a mild solution of \prettyref{eq:ivp_1}.

\item For $\rho>\max\left\{ s_{0}(\mathcal{M}),0\right\} $ we have
$u\in L_{2,\rho}(\mathbb{R};H)$ and 
\[
\left(\mathcal{L}_{\rho}u\right)(t)=\frac{1}{\sqrt{2\pi}}\left((\i t+\rho)M_{0}+M_{1}\right)^{-1}M_{0}u_{0}\quad(t\in\mathbb{R}).
\]

\end{enumerate}

In the latter case $M_{0}(u-\chi_{\mathbb{R}_{\geq0}}u_{0})\in H_{\rho}^{1}(\mathbb{R};H)$
for each $\rho>\max\left\{ s_{0}(\mathcal{M}),0\right\} .$
\end{thm}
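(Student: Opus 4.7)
For $(\mathrm{i})\Leftrightarrow(\mathrm{ii})$: given (ii), $M_{0}u$ coincides a.e.\ with a locally absolutely continuous function whose distributional derivative is $-M_{1}u$ and whose value at $0^{+}$ is $M_{0}u_{0}$; this gives both the distributional PDE and the Ces\`aro initial condition in (i). Conversely, from (i) the distributional equation forces a locally absolutely continuous representative of $M_{0}u$ on $(0,\infty)$ whose value at $0^{+}$ must equal $M_{0}u_{0}$ by the Ces\`aro condition, and integration against a test function yields (ii).

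For $(\mathrm{ii})\Rightarrow(\mathrm{iii})$ the crucial step is to upgrade local integrability of $u$ to exponential integrability. Applying $\iota_{R(M_{0})^{\bot}}^{\ast}$ to the identity in (ii) and differentiating gives $\iota_{R(M_{0})^{\bot}}^{\ast}M_{1}u=0$ a.e.\ on $(0,\infty)$; decomposing $u=\iota_{N(M_{0})^{\bot}}u_{\bot}+\iota_{N(M_{0})}u_{N}$ and invoking the continuous invertibility of both $\iota_{R(M_{0})^{\bot}}^{\ast}M_{1}\iota_{N(M_{0})}$ (from \prettyref{prop:regPen}(ii)) and $\iota_{R(M_{0})}^{\ast}M_{0}\iota_{N(M_{0})^{\bot}}$ (cf.\ \prettyref{rem:IVcongNbot}) produces a pointwise estimate $\|u(t)\|\leq C\|M_{0}u(t)\|$. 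Inserted into the integrated identity, Gr\"onwall's lemma then produces an exponential bound on $u$, whence $u\in L_{2,\rho}(\mathbb{R};H)$ for all sufficiently large $\rho$, and the Fourier--Laplace representation in (iii) is obtained verbatim as in the proof of \prettyref{prop:LaplaceRep}.

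For $(\mathrm{iii})\Rightarrow(\mathrm{ii})$, a direct computation using (iii) and \prettyref{prop:FLtransform} gives
\[
\mathcal{L}_{\rho}\bigl(M_{0}u+\partial_{0,\rho}^{-1}M_{1}u\bigr)(t)=\frac{\bigl((\i t+\rho)M_{0}+M_{1}\bigr)\mathcal{L}_{\rho}u(t)}{\i t+\rho}=\frac{M_{0}u_{0}}{\sqrt{2\pi}(\i t+\rho)}=\mathcal{L}_{\rho}\bigl(\chi_{\mathbb{R}_{\geq0}}M_{0}u_{0}\bigr)(t),
\]
whence by unitarity of $\mathcal{L}_{\rho}$, $M_{0}u+\partial_{0,\rho}^{-1}M_{1}u=\chi_{\mathbb{R}_{\geq0}}M_{0}u_{0}$ in $L_{2,\rho}(\mathbb{R};H)$. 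The main obstacle is to deduce $\spt u\subseteq\mathbb{R}_{\geq0}$ from this identity: on $(-\infty,0)$ it reads $M_{0}u(t)=-\int_{-\infty}^{t}M_{1}u(s)\,\mathrm{d}s$, so $\iota_{R(M_{0})^{\bot}}^{\ast}M_{1}u=0$ a.e.\ on $(-\infty,0)$ and the decomposition argument from the previous paragraph again produces $\|u(t)\|\leq C\|M_{0}u(t)\|$ a.e.\ for $t<0$. Setting $G(t)\coloneqq\int_{-\infty}^{t}\|M_{0}u(s)\|\,\mathrm{d}s$, one has $G'(t)\leq C\|M_{1}\|\,G(t)$ and $G(t)=O(\e^{\rho t})$ as $t\to-\infty$ by Cauchy--Schwarz; a reverse Gr\"onwall argument then forces $G\equiv0$ on $(-\infty,0)$ whenever $\rho>C\|M_{1}\|$, which is admissible since (iii) is postulated for \emph{every} $\rho>\max\{s_{0}(\mathcal{M}),0\}$. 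The identity then specialises to (ii) on $(0,\infty)$, and the closing regularity claim is immediate from $M_{0}(u-\chi_{\mathbb{R}_{\geq0}}u_{0})=-\partial_{0,\rho}^{-1}M_{1}u\in D(\partial_{0,\rho})=H_{\rho}^{1}(\mathbb{R};H)$.
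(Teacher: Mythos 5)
Your argument is correct in outline and, for the two hard implications, takes a genuinely different route from the paper. The paper proves (ii)$\Rightarrow$(iii) by a duality computation, testing against $\psi(t)=(-\i t+\rho)\left((-\i t+\rho)M_{0}^{\ast}+M_{1}^{\ast}\right)^{-1}(\mathcal{L}_{\rho}\varphi)(t)$ and invoking the Paley--Wiener-type statements \prettyref{lem:causality_1} and \prettyref{cor:causlity_2}, and it obtains causality in (iii)$\Rightarrow$(i) again from \prettyref{lem:causality_1}. You instead exploit the index-$0$ structure directly: projecting the integrated equation onto $R(M_{0})^{\bot}$ gives the algebraic constraint $\iota_{R(M_{0})^{\bot}}^{\ast}M_{1}u=0$ a.e., the invertibility of $\iota_{R(M_{0})^{\bot}}^{\ast}M_{1}\iota_{N(M_{0})}$ (\prettyref{prop:regPen}) and of $\iota_{R(M_{0})}^{\ast}M_{0}\iota_{N(M_{0})^{\bot}}$ (\prettyref{rem:IVcongNbot}) yield $|u(t)|_{H}\leq C|M_{0}u(t)|_{H}$, and Gr\"onwall -- forwards for the exponential bound in (ii)$\Rightarrow$(iii), backwards on $(-\infty,0)$ for causality in (iii)$\Rightarrow$(ii) -- replaces the complex-analytic lemmas. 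I checked the backward-Gr\"onwall step: it is sound, since (iii) supplies the identity $M_{0}u+\partial_{0,\rho}^{-1}M_{1}u=\chi_{\mathbb{R}_{\geq0}}M_{0}u_{0}$ for arbitrarily large $\rho$, so choosing $\rho>C\|M_{1}\|$ is legitimate; and your (i)$\Leftrightarrow$(ii) via an absolutely continuous representative of $M_{0}u$ is a shorter substitute for the paper's cut-off computation with the $\eta_{n}$. What your route buys is elementary real-variable reasoning with no contour shifting; what it loses is that it uses the index-$0$ structure twice, whereas the paper's duality/Paley--Wiener argument is the one that generalises.

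There is, however, one genuine gap: in (ii)$\Rightarrow$(iii) your Gr\"onwall bound gives $|u(t)|_{H}\lesssim\e^{C\|M_{1}\|t}$, hence $u\in L_{2,\rho}(\mathbb{R};H)$ and the transform formula only for $\rho$ \emph{sufficiently large}, while statement (iii) asserts them for \emph{every} $\rho>\max\{s_{0}(\mathcal{M}),0\}$; as written your cycle does not close the equivalence in the stated strength. The repair is one further step: with $f(z)=\frac{1}{\sqrt{2\pi}}(zM_{0}+M_{1})^{-1}M_{0}u_{0}$ one has $\sup_{z\in\mathbb{C}_{\Re\geq\rho'}}|zf(z)|_{H}<\infty$ for each $\rho'>\max\{s_{0}(\mathcal{M}),0\}$ (regularity together with \prettyref{rem:resolvent_bd}), so \prettyref{lem:independent_rho} transports the identity $u=\mathcal{L}_{\rho}^{\ast}f(\i\cdot+\rho)$ from one large $\rho$ down to every such $\rho'$, giving $u\in L_{2,\rho'}(\mathbb{R};H)$ and the representation there. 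Relatedly, ``verbatim as in \prettyref{prop:LaplaceRep}'' is not quite accurate: that proof uses $u-\chi_{\mathbb{R}_{\geq0}}u_{0}\in H_{\rho}^{1}(\mathbb{R};H)$, i.e.\ differentiability of $u$ itself, which a mild solution need not have; you should instead pass to $M_{0}(u-\chi_{\mathbb{R}_{\geq0}}u_{0})=-\partial_{0,\rho}^{-1}M_{1}u\in H_{\rho}^{1}(\mathbb{R};H)$ -- exactly the identity you derive in the converse direction -- before applying $\mathcal{L}_{\rho}$.
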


Note that, by \prettyref{thm:IVP_2}, we deduce that if $u$ is a
mild solution of \prettyref{eq:ivp_1}, then $u\in L_{2,\rho}(\mathbb{R};H)$.
Moreover, we obtain that $M_{0}(u-\chi_{\mathbb{R}_{\geq0}}u_{0})\in H_{\rho}^{1}(\mathbb{R};H)$
with
\[
\partial_{0,\rho}M_{0}(u-\chi_{\mathbb{R}_{\geq0}}u_{0})+M_{1}u=0
\]
 valid in $L_{2,\rho}(\mathbb{R};H)$ for some $\rho>0.$ Since the
pencil $\mathcal{M}$ associated with $(M_{0},M_{1})$ is regular,
we obtain $\left(\partial_{0,\rho}M_{0}+M_{1}\right)^{-1}\in L(L_{2,\rho}(\mathbb{R};H))$
is well-defined (see e.g.~\cite[Lemma 2.2]{Trostorff2015a}) and
\[
u=\chi_{\mathbb{R}_{\geq0}}u_{0}-\left(\partial_{0,\rho}M_{0}+M_{1}\right)^{-1}M_{1}\chi_{\mathbb{R}_{\geq0}}u_{0},
\]
which yields another representation for the solution $u$.

Before we can come to the proof of \prettyref{thm:IVP_2}, we need
some auxiliary results first. These statements focus on relating properties
of holomorphic functions to the support of their Fourier\textendash Laplace
transform. In this sense, one can think of the subsequent assertions
as variant of the Paley\textendash Wiener theorem. For this we need
a prerequisite:
\begin{lem}[{see also \cite[Lemma 3.6]{Trostorff2013}}]
\label{lem:independent_rho}Let $\mu,\rho\in\mathbb{R}$ with $\mu<\rho$
and let $U\coloneqq\left\{ z\in\mathbb{C}\,;\,\mu<\Re z<\rho\right\} .$
Moreover, let $f:\overline{U}\to H$ be continuous and analytic in
$U$ with 
\[
M\coloneqq\sup_{z\in\overline{U}}|zf(z)|_{H}<\infty.
\]
Then $f(\i\cdot+\kappa)\in L_{2}(\mathbb{R};H)$ for each $\mu\leq\kappa\leq\rho$
and 
\[
\left(\mathcal{L}_{\rho}^{\ast}f(\i\cdot+\rho)\right)(t)=\left(\mathcal{L}_{\mu}^{\ast}f(\i\cdot+\mu)\right)(t)\quad(t\in\mathbb{R}\mbox{ a.e.}).
\]
\end{lem}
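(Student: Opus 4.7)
The approach splits naturally into two parts: securing the $L_{2}$-membership on each vertical line, and proving the a.e.\ identity by a Cauchy contour deformation.

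For the $L_{2}$-membership, I would combine the continuity of $f$ on $\overline{U}$ (which bounds $f(\i\cdot+\kappa)$ on each compact interval) with the hypothesis $|zf(z)|_{H}\le M$, which yields the decay $|f(\i t+\kappa)|_{H}\le 2M/|t|$ once $|t|\ge 2\max(|\mu|,|\rho|)$. This decay is square-integrable at infinity, so $f(\i\cdot+\kappa)\in L_{2}(\mathbb{R};H)$ for every $\kappa\in[\mu,\rho]$.

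For the identity, fix $s\in\mathbb{R}$ and $R>0$, and apply Cauchy's theorem to $z\mapsto \e^{zs}f(z)$ on the closed rectangle $\Gamma_{R}$ with vertices $\mu\pm\i R,\ \rho\pm\i R$; this is legitimate since $\e^{zs}f(z)$ is continuous on $\overline{U}$ and holomorphic in $U$. Parametrising the vertical sides by $z=\kappa+\i t$ with $t\in[-R,R]$ and the horizontal sides by $z=x\pm\i R$ with $x\in[\mu,\rho]$, the vanishing of the contour integral rearranges to
\[
\int_{-R}^{R}\bigl(\e^{(\i t+\rho)s}f(\i t+\rho)-\e^{(\i t+\mu)s}f(\i t+\mu)\bigr)\,\mathrm{d}t = -\i\int_{\mu}^{\rho}\bigl(\e^{(x+\i R)s}f(x+\i R)-\e^{(x-\i R)s}f(x-\i R)\bigr)\,\mathrm{d}x.
\]
On the horizontal sides $|z|\ge R$, hence $|f(z)|_{H}\le M/R$, while $|\e^{zs}|\le\max(\e^{\mu s},\e^{\rho s})$; the right-hand side is therefore bounded in $H$-norm by $2(\rho-\mu)M\max(\e^{\mu s},\e^{\rho s})/R$ and tends to $0$ as $R\to\infty$ for every fixed $s\in\mathbb{R}$.

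To transfer this pointwise information to the $L_{2}$-defined operators $\mathcal{L}_{\kappa}^{\ast}$, I would use that $\mathcal{L}_{\kappa}^{\ast}=\exp(\kappa\m)\mathcal{L}_{0}^{\ast}$ together with standard $L_{2}$-Fourier theory: applied to $f(\i\cdot+\kappa)\in L_{2}(\mathbb{R};H)$, the function $\mathcal{L}_{\kappa}^{\ast}f(\i\cdot+\kappa)$ is the $L_{2,\kappa}$-limit of the truncated integrals $\tfrac{1}{\sqrt{2\pi}}\int_{-R}^{R}\e^{(\i t+\kappa)s}f(\i t+\kappa)\,\mathrm{d}t$ as $R\to\infty$. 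Extracting a common subsequence $R_{n}\to\infty$ along which both truncations (for $\kappa=\mu$ and $\kappa=\rho$) converge pointwise a.e., and combining with the pointwise vanishing of their difference established above, yields the claimed a.e.\ identity. The principal obstacle is bridging the pointwise-in-$s$ control furnished by Cauchy's theorem with the $L_{2}$-sense of $\mathcal{L}_{\kappa}^{\ast}$; one cannot obtain uniform-in-$s$ estimates because $\max(\e^{\mu s},\e^{\rho s})$ blows up, but the diagonal subsequence extraction is enough for the a.e.\ statement.
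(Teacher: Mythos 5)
Your proposal is correct and follows essentially the same route as the paper: the decay $|f(\i t+\kappa)|_{H}\le M/|\i t+\kappa|$ for the $L_{2}$-membership, Cauchy's theorem on the rectangle to trade the difference of truncated vertical integrals for horizontal-side integrals that vanish as $R\to\infty$ for each fixed $s$, and a common a.e.-convergent subsequence $R_{n}\to\infty$ of truncations to transfer the pointwise conclusion to the $L_{2}$-defined operators $\mathcal{L}_{\rho}^{\ast}$, $\mathcal{L}_{\mu}^{\ast}$. The paper's proof is the same argument, with the horizontal sides estimated by $\frac{2M}{\sqrt{2\pi}}\int_{\mu}^{\rho}\e^{\kappa t}\frac{1}{|\i R_{n}+\kappa|}\,\mathrm{d}\kappa$, which matches your bound.
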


\begin{proof}
The first claim follows from the boundedness of $t\mapsto(\i t+\kappa)f(\i t+\kappa)$
for each $\mu\leq\kappa\leq\rho$. For showing the second claim, let
$\left(R_{n}\right)_{n\in\mathbb{N}}$ be a sequence in $\mathbb{R}_{>0}$
such that $R_{n}\to\infty$ and 
\begin{align}
\left(\mathcal{L}_{\rho}^{\ast}\chi_{[-R_{n},R_{n}]}f(\i\cdot+\rho)\right)(t) & \to\left(\mathcal{L}_{\rho}^{\ast}f(\i\cdot+\rho)\right)(t)\label{eq:appr_almost_everywhere}\\
\left(\mathcal{L}_{\mu}^{\ast}\chi_{[-R_{n},R_{n}]}f(\i\cdot+\mu)\right)(t) & \to\left(\mathcal{L}_{\mu}^{\ast}f(\i\cdot+\mu)\right)(t)\nonumber 
\end{align}
for almost every $t\in\mathbb{R}$ as $n\to\infty$. For $n\in\mathbb{N},t\in\mathbb{R}$
we compute 
\begin{align*}
 & \left|\left(\mathcal{L}_{\rho}^{\ast}\chi_{[-R_{n},R_{n}]}f(\i\cdot+\rho)\right)(t)-\left(\mathcal{L}_{\mu}^{\ast}\chi_{[-R_{n},R_{n}]}f(\i\cdot+\mu)\right)(t)\right|_{H}\\
 & =\frac{1}{\sqrt{2\pi}}\left|\intop_{-R_{n}}^{R_{n}}\e^{\left(\i s+\rho\right)t}f(\i s+\rho)\mbox{ d}s-\intop_{-R_{n}}^{R_{n}}\e^{(\i s+\mu)t}f(\i s+\mu)\mbox{ d}s\right|_{H}\\
 & =\frac{1}{\sqrt{2\pi}}\left|\intop_{\mu}^{\rho}\e^{\left(\i R_{n}+\kappa\right)t}f(\i R_{n}+\kappa)\mbox{ d}\kappa-\intop_{\mu}^{\rho}\e^{(-\i R_{n}+\kappa)t}f(-\i R_{n}+\kappa)\mbox{ d}\kappa\right|_{H}
\end{align*}
by the Cauchy integral theorem. Hence, we can estimate 
\begin{align*}
 & \left|\left(\mathcal{L}_{\rho}^{\ast}\chi_{[-R_{n},R_{n}]}f(\i\cdot+\rho)\right)(t)-\left(\mathcal{L}_{\mu}^{\ast}\chi_{[-R_{n},R_{n}]}f(\i\cdot+\mu)\right)(t)\right|_{H}\\
 & \leq\frac{2M}{\sqrt{2\pi}}\intop_{\mu}^{\rho}\e^{\kappa t}\frac{1}{|\i R_{n}+\kappa|}\mbox{ d}\kappa\to0\quad\left(n\to\infty\right).
\end{align*}
Together with \prettyref{eq:appr_almost_everywhere}, this yields
the assertion.
\end{proof}
\begin{lem}
\label{lem:causality_1}Let $\rho_{0}\in\mathbb{R}$ and $f:\mathbb{C}_{\Re\geq\rho_{0}}\to H$
continuous, analytic in $\mathbb{C}_{\Re>\rho_{0}}$ with
\[
M\coloneqq\sup_{z\in\mathbb{C}_{\Re\geq\rho_{0}}}|zf(z)|_{H}<\infty.
\]
Then $\spt\mathcal{L}_{\rho_{0}}^{\ast}f(\i\cdot+\rho_{0})\subseteq\mathbb{R}_{\geq0}$.
\end{lem}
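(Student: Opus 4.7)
The plan is to exploit both the identity $\mathcal{L}_{\rho}=\mathcal{L}_{0}\exp(-\rho\m)$ and the $\rho$-independence provided by \prettyref{lem:independent_rho}. Setting $h_{\rho}(t)\coloneqq\bigl(\mathcal{L}_{0}^{\ast}f(\i\cdot+\rho)\bigr)(t)$, the former gives $\mathcal{L}_{\rho}^{\ast}=\exp(\rho\m)\mathcal{L}_{0}^{\ast}$, so that $\bigl(\mathcal{L}_{\rho}^{\ast}f(\i\cdot+\rho)\bigr)(t)=\e^{\rho t}h_{\rho}(t)$; and the latter, applied to the strip $\rho_{0}\leq\Re z\leq\rho$ for any $\rho>\rho_{0}$, yields $\e^{\rho_{0}t}h_{\rho_{0}}(t)=\e^{\rho t}h_{\rho}(t)$ for almost every $t\in\mathbb{R}$, i.e.\ $h_{\rho}(t)=\e^{(\rho_{0}-\rho)t}h_{\rho_{0}}(t)$ a.e.

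Second, I would bring in the growth hypothesis $|zf(z)|_{H}\leq M$ to obtain a quantitative decay in $\rho$ of $f(\i\cdot+\rho)$ in the $L_{2}$-norm. Using $|f(\i s+\rho)|_{H}^{2}\leq M^{2}/(s^{2}+\rho^{2})$ together with $\int_{\mathbb{R}}(s^{2}+\rho^{2})^{-1}\,\mathrm{d}s=\pi/\rho$, one has $\|f(\i\cdot+\rho)\|_{L_{2}(\mathbb{R};H)}^{2}\leq M^{2}\pi/\rho$, and Plancherel for the unitary $\mathcal{L}_{0}$ transfers this directly to $\|h_{\rho}\|_{L_{2}(\mathbb{R};H)}^{2}\leq M^{2}\pi/\rho$.

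Combining both ingredients along a sequence $\rho_{n}\to\infty$ (taking the countable union of the exceptional null sets from \prettyref{lem:independent_rho}) gives, on a set of full measure, $|h_{\rho_{n}}(t)|_{H}^{2}=\e^{2(\rho_{0}-\rho_{n})t}|h_{\rho_{0}}(t)|_{H}^{2}$, hence
\[
\int_{\mathbb{R}}\e^{2(\rho_{0}-\rho_{n})t}|h_{\rho_{0}}(t)|_{H}^{2}\,\mathrm{d}t=\|h_{\rho_{n}}\|_{L_{2}}^{2}\leq\frac{M^{2}\pi}{\rho_{n}}\xrightarrow[n\to\infty]{}0.
\]
If $h_{\rho_{0}}$ did not vanish a.e.\ on $\mathbb{R}_{<0}$, one could pick $\varepsilon>0$ so that $E\coloneqq\{t\leq-\varepsilon:h_{\rho_{0}}(t)\ne0\}$ has $c\coloneqq\int_{E}|h_{\rho_{0}}|_{H}^{2}\,\mathrm{d}t>0$; on $E$ the estimate $\e^{2(\rho_{0}-\rho_{n})t}\geq\e^{2(\rho_{n}-\rho_{0})\varepsilon}$ holds, so the above integral is bounded below by $c\,\e^{2(\rho_{n}-\rho_{0})\varepsilon}\to\infty$, contradicting the upper bound. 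Hence $h_{\rho_{0}}=0$ a.e.\ on $\mathbb{R}_{<0}$, and multiplying by $\e^{\rho_{0}\cdot}$ shows $\mathcal{L}_{\rho_{0}}^{\ast}f(\i\cdot+\rho_{0})=0$ a.e.\ on $\mathbb{R}_{<0}$.

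The main obstacle is obtaining the $\rho^{-1/2}$-decay of $\|f(\i\cdot+\rho)\|_{L_{2}}$: the pointwise bound $|f(z)|_{H}\leq M/|z|$ is just strong enough (after integration against $1/(s^{2}+\rho^{2})$) to make the weighted integral over $\mathbb{R}_{<0}$ collapse while the exponential factor $\e^{2(\rho_{n}-\rho_{0})\varepsilon}$ blows up, which is exactly the tension that forces $h_{\rho_{0}}$ to be supported in $\mathbb{R}_{\geq0}$. The remainder of the argument is standard bookkeeping of null sets and the elementary identity $\mathcal{L}_{\rho}=\mathcal{L}_{0}\exp(-\rho\m)$.
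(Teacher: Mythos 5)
Your proof is correct and rests on essentially the same ingredients as the paper's: Lemma \ref{lem:independent_rho} to pass from $\rho_{0}$ to arbitrarily large $\rho$, and the bound $|f(\i\cdot+\rho)|_{0}\leq M\sqrt{\pi/\rho}\to0$ coming from $|zf(z)|_{H}\leq M$, with the conclusion forced in the limit $\rho\to\infty$. The only difference is in the final bookkeeping: the paper pairs against $\varphi\in C_{c}^{\infty}(\mathbb{R}_{<0};H)$ and lets the pairing vanish, whereas you make the weight explicit via $\mathcal{L}_{\rho}^{\ast}=\exp(\rho\m)\mathcal{L}_{0}^{\ast}$ and obtain a contradiction on a positive-measure subset of $\{t\leq-\varepsilon\}$, which is a valid, equally elementary variant.
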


\begin{proof}
Let $\varphi\in C_{c}^{\infty}(\mathbb{R}_{<0};H)$ and $\rho>\max\{\rho_{0},0\}$.
Using \prettyref{lem:independent_rho} we obtain 
\begin{align*}
 & \left|\intop_{\mathbb{R}}\langle\left(\mathcal{L}_{\rho_{0}}^{\ast}f(\i\cdot+\rho_{0})\right)(t),\varphi(t)\rangle_{H}\mbox{ d}t\right|\\
 & =\left|\intop_{\mathbb{R}}\langle\left(\mathcal{L}_{\rho}^{\ast}f(\i\cdot+\rho)\right)(t),\varphi(t)\e^{2\rho t}\rangle_{H}\e^{-2\rho t}\mbox{ d}t\right|\\
 & \leq|\mathcal{L}_{\rho}^{\ast}f(\i\cdot+\rho)|_{\rho}|\varphi\e^{2\rho(\cdot)}|_{\rho}\\
 & =|f(\i\cdot+\rho)|_{0}\left(\intop_{-\infty}^{0}|\varphi(t)\e^{2\rho t}|_{H}^{2}\e^{-2\rho t}\mbox{ d}t\right)^{\frac{1}{2}}\\
 & \leq M\left|\frac{1}{\i\cdot+\rho}\right|_{0}|\varphi|_{0}\\
 & =M\sqrt{\frac{\pi}{\rho}}|\varphi|_{0}.
\end{align*}
Letting $\rho$ tend to infinity, we infer 
\[
\intop_{\mathbb{R}}\langle\left(\mathcal{L}_{\rho_{0}}^{\ast}f(\i\cdot+\rho_{0})\right)(t),\varphi(t)\rangle_{H}\mbox{ d}t=0
\]
which yields the assertion. 
\end{proof}
\begin{rem}
The latter lemma is a special case of the famous Paley-Wiener Theorem
(see \cite{Paley1987} or \cite[19.2 Theorem]{rudin1987real}), characterising
the $L_{2}$ functions supported on the positive real line by their
Laplace transform lying in the Hardy space $\mathcal{H}^{2}(\mathbb{C}_{\Re>0};H).$
However, to keep this article self-contained and since we do not need
this deeper result here, we restrict ourselves to the easier case
outlined in \prettyref{lem:causality_1}.
\end{rem}

\begin{cor}
\label{cor:causlity_2}Let $\rho_{0}\in\mathbb{R}$, $F:\mathbb{C}_{\Re\geq\rho_{0}}\to L(H)$
bounded, continuous and analytic in $\mathbb{C}_{\Re>\rho_{0}}.$
Let $g\in L_{2,\rho_{0}}(\mathbb{R};H)$ with $\spt g\subseteq\mathbb{R}_{\geq a}$
for some $a\in\mathbb{R}.$ Then 
\[
\spt\mathcal{L}_{\rho_{0}}^{\ast}F(\i\cdot+\rho_{0})\mathcal{L}_{\rho_{0}}g\subseteq\mathbb{R}_{\geq a}.
\]
\end{cor}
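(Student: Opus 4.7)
The plan is to handle this in three stages: a translation reduction to the case $a=0$, a density reduction to smooth compactly supported $g$, and a direct construction of a holomorphic extension to which \prettyref{lem:causality_1} applies.

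First I would record how the right-shift $(\tau_{a}g)(t):=g(t-a)$ interacts with $\mathcal{L}_{\rho_{0}}$ and $\mathcal{L}_{\rho_{0}}^{\ast}$: a change of variables gives $\mathcal{L}_{\rho_{0}}\tau_{a}g=\e^{-(\i\cdot+\rho_{0})a}\mathcal{L}_{\rho_{0}}g$ and, by the analogous computation for the inverse on a dense set, $\mathcal{L}_{\rho_{0}}^{\ast}\bigl(\e^{-(\i\cdot+\rho_{0})a}h\bigr)=\tau_{a}\mathcal{L}_{\rho_{0}}^{\ast}h$. Since the scalar factor $\e^{-(\i\cdot+\rho_{0})a}$ commutes with multiplication by $F(\i\cdot+\rho_{0})$, the operator $T_{F}:=\mathcal{L}_{\rho_{0}}^{\ast}F(\i\cdot+\rho_{0})\mathcal{L}_{\rho_{0}}$ commutes with $\tau_{a}$. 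Replacing $g$ by $\tau_{-a}g$ (which has support in $\mathbb{R}_{\geq0}$) therefore reduces the claim to $a=0$.

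Next I would use that $S:=\{h\in L_{2,\rho_{0}}(\mathbb{R};H):\spt h\subseteq\mathbb{R}_{\geq0}\}$ is closed in $L_{2,\rho_{0}}(\mathbb{R};H)$ and that $C_{c}^{\infty}(\mathbb{R}_{>0};H)$ is dense in $S$ (truncate to $[1/n,n]$ and mollify with a kernel supported in $(0,1/n)$). Since $F$ is bounded, $T_{F}$ is bounded on $L_{2,\rho_{0}}(\mathbb{R};H)$, so it suffices to verify $T_{F}g\in S$ for $g\in C_{c}^{\infty}(\mathbb{R}_{>0};H)$. For such $g$ I would define $\phi(z):=\frac{1}{\sqrt{2\pi}}\int_{0}^{\infty}\e^{-zs}g(s)\,\mathrm{d}s$, which is an entire $H$-valued function satisfying $\phi(\i t+\rho_{0})=(\mathcal{L}_{\rho_{0}}g)(t)$. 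An integration by parts yields $z\phi(z)=\frac{1}{\sqrt{2\pi}}\int_{0}^{\infty}\e^{-zs}g'(s)\,\mathrm{d}s$, and since $\spt g\subseteq[\delta,N]$ for some $0<\delta<N$, the factor $|\e^{-zs}|$ is bounded on $\spt g\times\mathbb{C}_{\Re\geq\rho_{0}}$, whence $\sup_{z\in\mathbb{C}_{\Re\geq\rho_{0}}}|z\phi(z)|_{H}<\infty$.

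Setting $f(z):=F(z)\phi(z)$ then yields a function continuous on $\mathbb{C}_{\Re\geq\rho_{0}}$, analytic on $\mathbb{C}_{\Re>\rho_{0}}$, with $\sup_{z\in\mathbb{C}_{\Re\geq\rho_{0}}}|zf(z)|_{H}\leq\|F\|_{\infty}\sup_{z}|z\phi(z)|_{H}<\infty$. Applying \prettyref{lem:causality_1} to $f$ gives $\spt\mathcal{L}_{\rho_{0}}^{\ast}f(\i\cdot+\rho_{0})\subseteq\mathbb{R}_{\geq0}$, and the identity $f(\i\cdot+\rho_{0})=F(\i\cdot+\rho_{0})\mathcal{L}_{\rho_{0}}g$ rephrases this as $\spt T_{F}g\subseteq\mathbb{R}_{\geq0}$; density and the closedness of $S$ then extend the conclusion to all $g\in S$. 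The main obstacle is really only bookkeeping: the two places I would be most careful are (i) the shift identity, which must be checked for both $\mathcal{L}_{\rho_{0}}$ and $\mathcal{L}_{\rho_{0}}^{\ast}$ so that $T_{F}$ genuinely commutes with $\tau_{a}$, and (ii) the uniformity of $|z\phi(z)|$ on the \emph{closed} half-plane (near $z=0$ this is automatic since $\phi$ is entire and $z\phi(z)\to 0$ there). Everything else amounts to feeding these bounds into \prettyref{lem:causality_1}.
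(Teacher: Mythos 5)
Your proposal is correct and follows essentially the same route as the paper's proof: reduce by density and boundedness of $F$ to $\psi\in C_{c}^{\infty}$ supported to the right of $a$, translate to the case $a=0$, and apply \prettyref{lem:causality_1} to $f(z)=F(z)\hat{\psi}(z)$, obtaining the bound on $|zf(z)|_{H}$ by integrating by parts so that $z\hat{\psi}(z)=\widehat{\psi'}(z)$. The only cosmetic differences are that you perform the translation reduction before the density step (phrased as commutation of $\mathcal{L}_{\rho_{0}}^{\ast}F(\i\cdot+\rho_{0})\mathcal{L}_{\rho_{0}}$ with the shift) whereas the paper does it after, on the fixed test function; both are fine.
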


\begin{proof}
Let $(\phi_{n})_{n\in\mathbb{N}}$ be a sequence in $C_{c}^{\infty}(\mathbb{R};H)$
such that $\spt\phi_{n}\subseteq\mathbb{R}_{\geq a}$ for all $n\in\mathbb{N}$
and $\phi_{n}\to g$ in $L_{2,\rho_{0}}(\mathbb{R};H)$ as $n\to\infty.$
By continuity 
\[
\mathcal{L}_{\rho_{0}}^{\ast}F(\i\cdot+\rho_{0})\mathcal{L}_{\rho_{0}}\phi_{n}\to\mathcal{L}_{\rho_{0}}^{\ast}F(\i\cdot+\rho_{0})\mathcal{L}_{\rho_{0}}g\mbox{ in }L_{2,\rho_{0}}(\mathbb{R};H)\quad(n\to\infty)
\]
and hence, it suffices to show that 
\[
\spt\mathcal{L}_{\rho_{0}}^{\ast}F(\i\cdot+\rho_{0})\mathcal{L}_{\rho_{0}}\psi\subseteq\mathbb{R}_{\geq a}
\]
for $\psi\in C_{c}^{\infty}(\mathbb{R}_{\geq a};H).$ Next, we shall
argue that we may assume without loss of generality $a=0$. Indeed,
let $\psi\in C_{c}^{\infty}(\mathbb{R}_{\geq a};H)$ and consider
the function $\tilde{\psi}\coloneqq\psi(\cdot+a)\in C_{c}^{\infty}(\mathbb{R}_{\geq0};H).$
Then we have 
\[
\mathcal{L}_{\rho_{0}}\tilde{\psi}=\e^{(-\i(\cdot)+\rho_{0})a}\mathcal{L}_{\rho_{0}}\psi
\]
and consequently 
\[
\left(\mathcal{L}_{\rho_{0}}^{\ast}F(\i\cdot+\rho_{0})\mathcal{L}_{\rho_{0}}\tilde{\psi}\right)=\left(\mathcal{L}_{\rho_{0}}^{\ast}F(\i\cdot+\rho_{0})\mathcal{L}_{\rho_{0}}\psi\right)(\cdot+a).
\]
Thus, we need to check 
\[
\spt\mathcal{L}_{\rho_{0}}^{\ast}F(\i\cdot+\rho_{0})\mathcal{L}_{\rho_{0}}\tilde{\psi}\subseteq\mathbb{R}_{\geq0}.
\]
Hence, proving the case $a=0$ indeed implies the general statement.
So, consider the function $f:\mathbb{C}_{\Re\geq\rho_{0}}\to H,$
given by 
\[
f(z)=F(z)\hat{\psi}(z),
\]
where 
\[
\hat{\psi}(z)=\frac{1}{\sqrt{2\pi}}\intop_{\mathbb{R}}\e^{-zs}\psi(s)\mbox{ d}s=\left(\mathcal{L}_{\Re z}\psi\right)(\Im z).
\]
Then clearly, $f$ is continuous and analytic in $\mathbb{C}_{\Re>\rho_{0}}$
and 
\[
|zf(z)|_{H}=\left|F(z)\hat{\psi}'(z)\right|_{H}\leq\|F\|_{\infty}\|\psi'\|_{\infty}\frac{1}{\sqrt{2\pi}}\intop_{0}^{\max\spt\psi}\e^{-\rho_{0}s}\mbox{ d}s.
\]
Hence, 
\[
\spt\mathcal{L}_{\rho_{0}}^{\ast}F(\i\cdot+\rho_{0})\mathcal{L}_{\rho_{0}}\tilde{\psi}=\spt\mathcal{L}_{\rho_{0}}^{\ast}f(\i\cdot+\rho_{0})\subseteq\mathbb{R}_{\geq0}
\]
by \prettyref{lem:causality_1}.
\end{proof}
Having these prerequisites at hand, we are now in the position to
prove the main result of this section.
\begin{proof}[Proof of \prettyref{thm:IVP_2}.]
(i) $\Rightarrow$ (ii): Let $\psi\in C_{c}^{\infty}(\mathbb{R}_{\geq0};H)$.
We compute
\begin{multline*}
\intop_{0}^{\infty}\left\langle M_{0}u(t)+\intop_{0}^{t}M_{1}u(s)\mbox{ d}s,\psi(t)\right\rangle _{H}\mbox{ d}t\\
=\intop_{0}^{\infty}\langle M_{0}u(t),\psi(t)\rangle_{H}\mbox{ d}t+\intop_{0}^{\infty}\left\langle M_{1}u(t),\intop_{t}^{\infty}\psi(s)\mbox{ d}s\right\rangle _{H}\mbox{ d}t.
\end{multline*}
Setting $\varphi(t)\coloneqq\intop_{t}^{\infty}\psi(s)\mbox{ d}s$
for $t\in\mathbb{R}_{\geq0}$, the latter can be expressed by 
\begin{multline*}
\intop_{0}^{\infty}\left\langle M_{0}u(t)+\intop_{0}^{t}M_{1}u(s)\mbox{ d}s,\psi(t)\right\rangle _{H}\mbox{ d}t\\
=-\intop_{0}^{\infty}\langle M_{0}u(t),\varphi'(t)\rangle_{H}\mbox{ d}t+\intop_{0}^{\infty}\langle M_{1}u(t),\varphi(t)\rangle_{H}\mbox{ d}t.
\end{multline*}
Let now $\eta_{n}\in C^{\infty}(\mathbb{R})$ such that $\spt\eta_{n}\subseteq[\frac{1}{n},\infty[,$
$\eta_{n}=1$ on $[\frac{2}{n},\infty[$ and $|\eta_{n}|_{\infty}\leq1,|\eta_{n}'|_{\infty}\leq2n$
for each $n\in\mathbb{N}$. Set $\varphi_{n}\coloneqq\varphi\eta_{n}\in C_{c}^{\infty}(\mathbb{R}_{>0};H)$
for $n\in\mathbb{N}$. We obtain
\begin{align*}
 & -\intop_{0}^{\infty}\langle M_{0}u(t),\varphi'(t)\rangle_{H}\mbox{ d}t+\intop_{0}^{\infty}\langle M_{1}u(t),\varphi(t)\rangle_{H}\mbox{ d}t\\
= & \lim_{n\to\infty}\left(-\intop_{0}^{\infty}\langle M_{0}u(t),\varphi'(t)\eta_{n}(t)\rangle_{H}\mbox{ d}t+\intop_{0}^{\infty}\langle M_{1}u(t),\varphi_{n}(t)\rangle_{H}\mbox{ d}t\right)\\
= & \lim_{n\to\infty}\left(-\intop_{0}^{\infty}\langle M_{0}u(t),\varphi_{n}'(t)\rangle_{H}\mbox{ d}t+\intop_{0}^{\infty}\left\langle M_{0}u(t),\varphi(t)\eta_{n}'(t)\right\rangle _{H}\mbox{ d}t+\intop_{0}^{\infty}\langle M_{1}u(t),\varphi_{n}(t)\rangle_{H}\mbox{ d}t\right)\\
= & \lim_{n\to\infty}\intop_{0}^{\infty}\left\langle M_{0}u(t),\varphi(t)\eta_{n}'(t)\right\rangle _{H}\mbox{ d}t.
\end{align*}
Moreover, we have that 
\[
\left|\intop_{0}^{\infty}\left\langle M_{0}\left(u(t)-u_{0}\right),\varphi(t)\eta_{n}'(t)\right\rangle _{H}\mbox{ d}t\right|\leq2n\|\varphi\|_{\infty}\intop_{0}^{\frac{2}{n}}|M_{0}\left(u(t)-u_{0}\right)|_{H}\mbox{ d}t\to0\quad(n\to\infty)
\]
and thus, 
\begin{align*}
 & \lim_{n\to\infty}\intop_{0}^{\infty}\left\langle M_{0}u(t),\varphi(t)\eta_{n}'(t)\right\rangle _{H}\mbox{ d}t\\
 & =\lim_{n\to\infty}\intop_{0}^{\infty}\langle M_{0}u_{0},\varphi(t)\eta_{n}'(t)\rangle_{H}\mbox{ d}t\\
 & =\lim_{n\to\infty}\left(\intop_{0}^{\infty}\langle M_{0}u_{0},\varphi_{n}'(t)\rangle_{H}\mbox{ d}t-\intop_{0}^{\infty}\langle M_{0}u_{0},\varphi'(t)\eta_{n}(t)\rangle_{H}\mbox{ d}t\right)\\
 & =-\intop_{0}^{\infty}\langle M_{0}u_{0},\varphi'(t)\rangle_{H}\mbox{ d}t\\
 & =\intop_{0}^{\infty}\langle M_{0}u_{0},\psi(t)\rangle_{H}\mbox{ d}t.
\end{align*}
Summarising, we have shown that 
\[
\intop_{0}^{\infty}\left\langle M_{0}u(t)+\intop_{0}^{t}M_{1}u(s)\mbox{ d}s,\psi(t)\right\rangle _{H}\mbox{ d}t=\intop_{0}^{\infty}\langle M_{0}u_{0},\psi(t)\rangle_{H}\mbox{ d}t
\]
for each $\psi\in C_{c}^{\infty}(\mathbb{R}_{\geq0};H)$ and hence,
the assertion follows.\\
(ii) $\Rightarrow$ (iii): Let $\varphi\in C_{c}^{\infty}(\mathbb{R};H)$
and $\rho>\max\{s_{0}(\mathcal{M}),0\}$. Denote by $a\coloneqq\max\spt\varphi.$
We set 
\[
\psi(t)\coloneqq(-\i t+\rho)\left((-\i t+\rho)M_{0}^{\ast}+M_{1}^{\ast}\right)^{-1}(\mathcal{L}_{\rho}\varphi)(t)\quad(t\in\mathbb{R}).
\]
Then we infer 
\begin{align}
\intop_{\mathbb{R}}\langle u(t),\varphi(t)\rangle_{H}\e^{-2\rho t}\mbox{ d}t & =\intop_{0}^{a}\langle u(t),\varphi(t)\rangle_{H}\e^{-2\rho t}\mbox{ d}t\nonumber \\
 & =\intop_{0}^{a}\left\langle u(t),\left(\mathcal{L}_{\rho}^{\ast}\left(M_{0}^{\ast}+\frac{1}{-\i\cdot+\rho}M_{1}^{\ast}\right)\psi\right)(t)\right\rangle _{H}\e^{-2\rho t}\mbox{ d}t.\label{eq:e1}
\end{align}
As a first step we show that $\spt\mathcal{L}_{\rho}^{\ast}\psi\subseteq]-\infty,a].$
For doing so, let $\xi\in C_{c}^{\infty}(\mathbb{R}_{>a},H).$ Then
we compute 
\begin{align*}
\intop_{\mathbb{R}}\langle\mathcal{L}_{\rho}^{\ast}\psi(t),\xi(t)\rangle_{H}\e^{-2\rho t}\mbox{ d}t & =\intop_{\mathbb{R}}\langle\psi(t),\mathcal{L}_{\rho}\xi(t)\rangle_{H}\mbox{ d}t\\
 & =\intop_{\mathbb{R}}\left\langle \left(\mathcal{L}_{\rho}\varphi\right)(t),\left((\i t+\rho)M_{0}+M_{1}\right)^{-1}\left(\mathcal{L}_{\rho}\xi'\right)(t)\right\rangle _{H}\mbox{ d}t\\
 & =\intop_{-\infty}^{a}\langle\varphi(t),\left(\mathcal{L}_{\rho}^{\ast}\left(\left(\i\cdot+\rho\right)M_{0}+M_{1}\right)^{-1}\mathcal{L}_{\rho}\xi'\right)(t)\rangle_{H}\e^{-2\rho t}\mbox{ d}t,
\end{align*}
where in the last equality we used $\spt\varphi\subseteq]-\infty,a]$.
Next, by \prettyref{cor:causlity_2} applied to $g=\xi'$ and $F\colon z\mapsto\left(zM_{0}+M_{1}\right)^{-1}$,
we deduce that $\spt\xi'\subseteq\mathbb{R}_{\geq a}$ implies $\spt\left(\mathcal{L}_{\rho}^{\ast}F(\i\cdot+\rho)\mathcal{L}_{\rho}\xi'\right)\subseteq\mathbb{R}_{\geq a}$.
Hence,
\[
\intop_{\mathbb{R}}\langle\mathcal{L}_{\rho}^{\ast}\psi(t),\xi(t)\rangle_{H}\e^{-2\rho t}\mbox{ d}t=0.
\]
Since $\xi$ was arbitrary, we conclude the first step and deduce
that
\begin{equation}
\spt\mathcal{L}_{\rho}^{*}\psi\subseteq]-\infty,a].\label{eq:Lpsisup}
\end{equation}
 Moreover, we compute 
\begin{align}
\mathcal{L}_{\rho}^{\ast}\left(\frac{1}{-\i\cdot+\rho}\psi\right)(t)\e^{-2\rho t} & =\frac{1}{\sqrt{2\pi}}\intop_{\mathbb{R}}\frac{1}{-\i s+\rho}\psi(s)\e^{\left(\i s-\rho\right)t}\mbox{ d}s\nonumber \\
 & =\frac{1}{\sqrt{2\pi}}\intop_{\mathbb{R}}\intop_{t}^{\infty}\e^{(\i s-\rho)r}\mbox{ d}r\psi(s)\mbox{ d}s\nonumber \\
 & =\intop_{t}^{\infty}\e^{-\rho r}\frac{1}{\sqrt{2\pi}}\intop_{\mathbb{R}}\e^{\i sr}\psi(s)\mbox{ d}s\mbox{ d}r\nonumber \\
 & =\intop_{t}^{a}\left(\mathcal{L}_{\rho}^{\ast}\psi\right)(r)\e^{-2\rho r}\mbox{ d}r\label{eq:e2}
\end{align}
for each $t\in\mathbb{R}$ and hence, with \prettyref{eq:e1} and
\prettyref{eq:e2} we get 
\begin{align*}
 & \intop_{\mathbb{R}}\langle u(t),\varphi(t)\rangle_{H}\e^{-2\rho t}\mbox{ d}t\\
 & =\intop_{0}^{a}\left\langle u(t),M_{0}^{\ast}\left(\mathcal{L}_{\rho}^{\ast}\psi\right)(t)\e^{-2\rho t}+M_{1}^{\ast}\intop_{t}^{a}\left(\mathcal{L}_{\rho}^{\ast}\psi\right)(r)\e^{-2\rho r}\mbox{ d}r\right\rangle _{H}\mbox{ d}t\\
 & =\intop_{0}^{a}\langle M_{0}u(t),\left(\mathcal{L}_{\rho}^{\ast}\psi\right)(t)\rangle_{H}\e^{-2\rho t}\mbox{ d}t+\intop_{0}^{a}\intop_{t}^{a}\langle M_{1}u(t),\left(\mathcal{L}_{\rho}^{\ast}\psi\right)(r)\rangle_{H}\e^{-2\rho r}\mbox{ d}r\mbox{ d}t\\
 & =\intop_{0}^{a}\langle M_{0}u(t),\left(\mathcal{L}_{\rho}^{\ast}\psi\right)(t)\rangle_{H}\e^{-2\rho t}\mbox{ d}t+\intop_{0}^{a}\left\langle \intop_{0}^{r}M_{1}u(t)\mbox{ d}t,\left(\mathcal{L}_{\rho}^{\ast}\psi\right)(r)\right\rangle _{H}\e^{-2\rho r}\mbox{ d}r\\
 & =\intop_{0}^{a}\left\langle M_{0}u(t)+\intop_{0}^{t}M_{1}u(r)\mbox{ d}r,\left(\mathcal{L}_{\rho}^{\ast}\psi\right)(t)\right\rangle _{H}\e^{-2\rho t}\mbox{ d}t.
\end{align*}
By using (ii) and \prettyref{eq:Lpsisup}, we further obtain
\begin{align*}
 & \intop_{\mathbb{R}}\langle u(t),\varphi(t)\rangle_{H}\e^{-2\rho t}\mbox{ d}t\\
 & =\intop_{0}^{a}\langle M_{0}u_{0},\left(\mathcal{L}_{\rho}^{\ast}\psi\right)(t)\rangle_{H}\e^{-2\rho t}\mbox{ d}t\\
 & =\langle\chi_{\mathbb{R}_{\geq0}}M_{0}u_{0},\mathcal{L}_{\rho}^{\ast}\psi\rangle_{\rho}\\
 & =\langle\mathcal{L}_{\rho}\chi_{\mathbb{R}_{\geq0}}M_{0}u_{0},\psi\rangle_{0}\\
 & =\frac{1}{\sqrt{2\pi}}\intop_{\mathbb{R}}\left\langle \frac{1}{\i t+\rho}M_{0}u_{0},(-\i t+\rho)\left((-\i t+\rho)M_{0}^{\ast}+M_{1}^{\ast}\right)^{-1}(\mathcal{L}_{\rho}\varphi)(t)\right\rangle _{H}\mbox{ d}t\\
 & =\frac{1}{\sqrt{2\pi}}\intop_{\mathbb{R}}\left\langle \left((\i t+\rho)M_{0}+M_{1}\right)^{-1}M_{0}u_{0},(\mathcal{L}_{\rho}\varphi)(t)\right\rangle _{H}\mbox{ d}t\\
 & =\left\langle \mathcal{L}_{\rho}^{\ast}\frac{1}{\sqrt{2\pi}}\left(\left(\i\cdot+\rho\right)M_{0}+M_{1}\right)^{-1}M_{0}u_{0},\varphi\right\rangle _{\rho}.
\end{align*}
Since $\varphi\in C_{c}^{\infty}(\mathbb{R};H)$ was chosen arbitrarily,
we obtain (iii).

(iii) $\Rightarrow$ (i): Consider the function 
\[
f(z)=(zM_{0}+M_{1})^{-1}M_{0}u_{0}=\frac{1}{z}M_{0}u_{0}+\frac{1}{z}\left(zM_{0}+M_{1}\right)^{-1}M_{1}u_{0}\quad(z\in\mathbb{C}_{\Re\geq\rho}).
\]
Then, $f$ is continuous and analytic in $\mathbb{C}_{\Re>\rho}$
and satisfies 
\[
|zf(z)|\leq\left(\|M_{0}\|+\|\mathcal{M}(z)^{-1}\|\|M_{1}\|\right)|u_{0}|.
\]
Thus, by \prettyref{lem:causality_1} we infer that 
\[
u=\frac{1}{\sqrt{2\pi}}\mathcal{L}_{\rho}^{\ast}\left(\left(\i\cdot+\rho\right)M_{0}+M_{1}\right)^{-1}M_{0}u_{0}=\frac{1}{\sqrt{2\pi}}\mathcal{L}_{\rho}^{\ast}f(\i\cdot+\rho)
\]
is supported on $\mathbb{R}_{\geq0}$. Moreover, we compute
\begin{align}
(\i t+\rho)\left(\mathcal{L}_{\rho}M_{0}\left(u-\chi_{\mathbb{R}_{\geq0}}u_{0}\right)\right)(t) & =(\i t+\rho)M_{0}\left(\mathcal{L}_{\rho}u\right)(t)-\frac{1}{\sqrt{2\pi}}M_{0}u_{0}\nonumber \\
 & =\frac{1}{\sqrt{2\pi}}\left((\i t+\rho)M_{0}\left(\left(\i t+\rho\right)M_{0}+M_{1}\right)^{-1}-1\right)M_{0}u_{0}\nonumber \\
 & =-\frac{1}{\sqrt{2\pi}}M_{1}\left(\left(\i t+\rho\right)M_{0}+M_{1}\right)^{-1}M_{0}u_{0}\nonumber \\
 & =-M_{1}\left(\mathcal{L}_{\rho}u\right)(t)\label{eq:derivative}
\end{align}
for each $t\in\mathbb{R}$. Hence, since $M_{1}\mathcal{L}_{\rho}u\in L_{2}(\mathbb{R};H)$
we get $M_{0}(u-\chi_{\mathbb{R}_{\geq0}}u_{0})\in H_{\rho}^{1}(\mathbb{R};H).$
Therefore, $M_{0}(u-\chi_{\mathbb{R}_{\geq0}}u_{0})$ is continuous
by \prettyref{prop:Sobolev} and thus, 
\[
\lim_{t\to0+}M_{0}\left(u(t)-u_{0}\right)=0,
\]
since $u$ is supported on $\mathbb{R}_{\geq0}$. In particular, by
the fundamental theorem of calculus, we have
\[
\lim_{t\to0+}\frac{1}{t}\intop_{0}^{t}|M_{0}(u(s)-u_{0})|=0.
\]
For showing the remaining claim, let $\varphi\in C_{c}^{\infty}(\mathbb{R}_{>0}).$
From \prettyref{eq:derivative} we read off with the help of \prettyref{prop:FLtransform}
\[
\partial_{0,\rho}M_{0}(u-\chi_{\mathbb{R}_{>0}}u_{0})+M_{1}u=0
\]
and consequently, 
\begin{align*}
 & -\intop_{0}^{\infty}\langle M_{0}u(t),\varphi'(t)\rangle_{H}\mbox{ d}t+\intop_{0}^{\infty}\langle M_{1}u(t),\varphi(t)\rangle_{H}\mbox{ d}t\\
 & =-\intop_{0}^{\infty}\langle M_{0}\left(u(t)-u_{0}\right),\varphi'(t)\rangle_{H}\mbox{ d}t+\intop_{0}^{\infty}\langle M_{1}u(t),\varphi(t)\rangle_{H}\mbox{ d}t\\
 & =-\langle M_{0}(u-\chi_{\mathbb{R}_{\geq0}}u_{0}),\varphi'\e^{2\rho(\cdot)}\rangle_{\rho}+\langle M_{1}u,\varphi\e^{2\rho(\cdot)}\rangle_{\rho}\\
 & =\langle M_{0}(u-\chi_{\mathbb{R}_{\geq0}}u_{0}),-\left(\varphi\e^{2\rho(\cdot)}\right)'+2\rho\varphi\e^{2\rho(\cdot)}\rangle_{\rho}+\langle M_{1}u,\varphi\e^{2\rho(\cdot)}\rangle_{\rho}\\
 & =\langle M_{0}(u-\chi_{\mathbb{R}_{\geq0}}u_{0}),\partial_{0,\rho}^{\ast}\left(\varphi\e^{2\rho(\cdot)}\right)\rangle_{\rho}+\langle M_{1}u,\varphi\e^{2\rho(\cdot)}\rangle_{\rho}\\
 & =0.\tag*{\qedhere}
\end{align*}
\end{proof}
In the following, we shall address the asymptotic properties of the
solution $u$, if considered as a solution to \prettyref{eq:ivp_1}
or to \prettyref{eq:mIVP}. For this, we associate a spectrum to \prettyref{eq:ivp_1}
and \prettyref{eq:mIVP}. The next section proves equality of the
spectra to be introduced.

\section{$\sigma_{\textnormal{IV}}\left(\mathcal{M}\right)=\sigma\left(\mathcal{M}\right).$\label{sec:spectra}}

In this section, we will assume throughout that $\mathcal{M}$ is
a regular linear operator pencil associated with $(M_{0},M_{1})$
in some Hilbert space with $R(M_{0})\subseteq H$ closed. We consider
the following two spectra
\begin{align*}
\sigma_{\mathrm{IV}}(\mathcal{M}) & =\sigma\left(-\left(\iota_{R(M_{0})}^{\ast}M_{0}\iota_{\mathrm{IV}}\right)^{-1}\left(\iota_{R(M_{0})}^{\ast}M_{1}\iota_{\mathrm{IV}}\right)\right),\\
\sigma(\mathcal{M}) & =\{z\in\mathbb{C}\,;\,0\in\sigma(zM_{0}+M_{1})\}.
\end{align*}
We note that the first spectrum is strongly related to the asymptotics
of the initial value problem treated in \prettyref{sec:firstIV},
while the second spectrum determines the asymptotic behaviour of the
initial value problem studied in \prettyref{sec:secondIV} (for details
see \prettyref{sec:Asymptotics}). Before we address the spectra just
introduced, we elaborate a bit more on the connection of $\textnormal{IV}$
and $N(M_{0})^{\bot},$ see also \prettyref{rem:IVcongNbot}. We reformulate
$\textnormal{IV}$ first:
\begin{lem}
\label{lem:IVchar}Let $u\in H$. Then $u\in\textnormal{IV}$ if,
and only if, 
\begin{equation}
\iota_{N(M_{0})}^{*}u=-\left(\iota_{R(M_{0})^{\bot}}^{\ast}M_{1}\iota_{N(M_{0})}\right)^{-1}\iota_{R(M_{0})^{\bot}}^{\ast}M_{1}\iota_{N(M_{0})^{\bot}}\iota_{N(M_{0})^{\bot}}^{*}u.\label{eq:IVchar}
\end{equation}
\end{lem}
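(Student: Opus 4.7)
The plan is to show both directions by exploiting that $R(M_0)$ is closed, hence $R(M_0) = (R(M_0)^\perp)^\perp$, so $M_1 u \in R(M_0)$ is equivalent to $\iota_{R(M_0)^\perp}^{\ast} M_1 u = 0$. This single reformulation will turn the defining condition of $\textnormal{IV}$ into the asserted formula after an orthogonal decomposition.

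First I would decompose $u = \iota_{N(M_0)^\perp}\iota_{N(M_0)^\perp}^{\ast}u + \iota_{N(M_0)}\iota_{N(M_0)}^{\ast}u$, using that $H = N(M_0)^\perp \oplus N(M_0)$ and that $\iota_S^{\ast}$ acts as the orthogonal projection onto $S$ (as recalled just before \prettyref{prop:regPen}). Applying $\iota_{R(M_0)^\perp}^{\ast} M_1$ to both sides, the condition $u \in \textnormal{IV}$ becomes
\[
\iota_{R(M_0)^\perp}^{\ast} M_1 \iota_{N(M_0)^\perp}\iota_{N(M_0)^\perp}^{\ast}u + \iota_{R(M_0)^\perp}^{\ast} M_1 \iota_{N(M_0)}\iota_{N(M_0)}^{\ast}u = 0.
\]

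Next I would invoke \prettyref{prop:regPen} (ii): since $\mathcal{M}$ is regular and $R(M_0)$ is closed, the operator $\iota_{R(M_0)^\perp}^{\ast} M_1 \iota_{N(M_0)} \colon N(M_0) \to R(M_0)^\perp$ is continuously invertible. Solving the above displayed identity for $\iota_{N(M_0)}^{\ast}u$ yields exactly \prettyref{eq:IVchar}, proving the forward implication. The converse is obtained by reading the argument backwards: if \prettyref{eq:IVchar} holds, then applying $\iota_{R(M_0)^\perp}^{\ast} M_1 \iota_{N(M_0)}$ and using the orthogonal decomposition gives $\iota_{R(M_0)^\perp}^{\ast} M_1 u = 0$, whence $M_1 u \in (R(M_0)^\perp)^\perp = R(M_0)$, i.e., $u \in \textnormal{IV}$.

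There is no real obstacle here; the lemma is essentially a bookkeeping statement that makes the block structure already exploited in \prettyref{prop:regPen} explicit at the level of vectors. The only point requiring a moment's care is the use of closedness of $R(M_0)$ to identify it with $(R(M_0)^\perp)^\perp$, which is needed to rewrite membership in $R(M_0)$ as vanishing under $\iota_{R(M_0)^\perp}^{\ast}$.
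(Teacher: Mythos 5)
Your proof is correct. It reaches the same reduced equation as the paper, namely
\[
\iota_{R(M_{0})^{\bot}}^{\ast}M_{1}\iota_{N(M_{0})^{\bot}}\iota_{N(M_{0})^{\bot}}^{\ast}u+\iota_{R(M_{0})^{\bot}}^{\ast}M_{1}\iota_{N(M_{0})}\iota_{N(M_{0})}^{\ast}u=0,
\]
and then solves it with the invertibility of $\iota_{R(M_{0})^{\bot}}^{\ast}M_{1}\iota_{N(M_{0})}$ from Proposition \ref{prop:regPen}, exactly as the paper does; but the way you get there is a little different. The paper phrases $u\in\mathrm{IV}$ as the existence of $v\in H$ with $M_{1}u=M_{0}v$, passes to the block operator matrices $U_{1}M_{1}U_{0}^{\ast}$ and $U_{1}M_{0}U_{0}^{\ast}$, and then eliminates $v$ from the first block row by using the bijectivity of $\iota_{R(M_{0})}^{\ast}M_{0}\iota_{N(M_{0})^{\bot}}$, so that only the second block row (the displayed equation) remains. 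You instead dispose of the existential quantifier at the outset: since $R(M_{0})$ is closed, $M_{1}u\in R(M_{0})$ is equivalent to $\iota_{R(M_{0})^{\bot}}^{\ast}M_{1}u=0$, because $(R(M_{0})^{\bot})^{\bot}=R(M_{0})$. This makes your argument slightly more economical -- no auxiliary vector $v$ and no use of the block structure of $M_{0}$ or the invertibility of $\tilde{M}_{0}$ are needed, only the single invertibility statement of Proposition \ref{prop:regPen} (ii) -- whereas the paper's version has the side benefit of re-displaying the block decomposition that it exploits again in the spectral identity of Section \ref{sec:spectra}. Your explicit remark that closedness of $R(M_{0})$ is what justifies $R(M_{0})=(R(M_{0})^{\bot})^{\bot}$ is precisely the point that needs care, and you handle it correctly; the converse direction by reversing the steps is also fine since every step is an equivalence.
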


\begin{proof}
With the unitary operators $U_{0}$ and $U_{1}$ as introduced in
\prettyref{prop:regPen}, we have
\begin{align*}
U_{1}M_{1}U_{0}^{*} & =\left(\begin{array}{cc}
\iota_{R(M_{0})}^{\ast}M_{1}\iota_{N(M_{0})^{\bot}} & \iota_{R(M_{0})}^{\ast}M_{1}\iota_{N(M_{0})}\\
\iota_{R(M_{0})^{\bot}}^{\ast}M_{1}\iota_{N(M_{0})^{\bot}} & \iota_{R(M_{0})^{\bot}}^{\ast}M_{1}\iota_{N(M_{0})}
\end{array}\right),\\
U_{1}M_{0}U_{0}^{*} & =\left(\begin{array}{cc}
\iota_{R(M_{0})}^{\ast}M_{0}\iota_{N(M_{0})^{\bot}} & 0\\
0 & 0
\end{array}\right).
\end{align*}
Thus, for $u\in H$ with $(x,y)\coloneqq U_{0}u$ we obtain
\begin{align*}
u\in\textnormal{IV} & \iff\exists v\in H\colon M_{1}u=M_{0}v\\
 & \iff\exists v\in H\colon U_{1}M_{1}U_{0}^{*}U_{0}u=U_{1}M_{0}U_{0}^{*}U_{0}v\\
 & \iff\exists v\in H\colon\left(\begin{array}{cc}
\iota_{R(M_{0})}^{\ast}M_{1}\iota_{N(M_{0})^{\bot}} & \iota_{R(M_{0})}^{\ast}M_{1}\iota_{N(M_{0})}\\
\iota_{R(M_{0})^{\bot}}^{\ast}M_{1}\iota_{N(M_{0})^{\bot}} & \iota_{R(M_{0})^{\bot}}^{\ast}M_{1}\iota_{N(M_{0})}
\end{array}\right)\left(\begin{array}{c}
x\\
y
\end{array}\right)\\
 & \quad\quad\quad\quad\quad=\left(\begin{array}{c}
\iota_{R(M_{0})}^{\ast}M_{0}\iota_{N(M_{0})^{\bot}}\iota_{N(M_{0})^{\bot}}^{\ast}v\\
0
\end{array}\right)\\
 & \iff\exists v\in H\colon\left(\begin{array}{c}
\iota_{R(M_{0})}^{\ast}M_{1}\iota_{N(M_{0})^{\bot}}x+\iota_{R(M_{0})}^{\ast}M_{1}\iota_{N(M_{0})}y\\
\iota_{R(M_{0})^{\bot}}^{\ast}M_{1}\iota_{N(M_{0})^{\bot}}x+\iota_{R(M_{0})^{\bot}}^{\ast}M_{1}\iota_{N(M_{0})}y
\end{array}\right)\\
 & \quad\quad\quad\quad\quad=\left(\begin{array}{c}
\iota_{R(M_{0})}^{\ast}M_{0}\iota_{N(M_{0})^{\bot}}\iota_{N(M_{0})^{\bot}}^{\ast}v\\
0
\end{array}\right)\\
 & \iff\iota_{R(M_{0})^{\bot}}^{\ast}M_{1}\iota_{N(M_{0})^{\bot}}x+\iota_{R(M_{0})^{\bot}}^{\ast}M_{1}\iota_{N(M_{0})}y=0,
\end{align*}
where in the last equivalence we have used the invertibility of $\iota_{R(M_{0})}^{\ast}M_{0}\iota_{N(M_{0})^{\bot}}.$
Since, $\mathcal{M}$ is regular and $R(M_{0})\subseteq H$ is closed,
we obtain the assertion by \prettyref{prop:regPen}. Indeed, an application
of \prettyref{prop:regPen} yields that $\iota_{R(M_{0})^{\bot}}^{\ast}M_{1}\iota_{N(M_{0})}$
is an isomorphism. Thus, for $U_{0}u=(\iota_{N(M_{0})^{\bot}}^{\ast}u,\iota_{N(M_{0})}^{\ast}u)=(x,y)$
we have
\[
u\in\mathrm{IV}\iff y=-\left(\iota_{R(M_{0})^{\bot}}^{\ast}M_{1}\iota_{N(M_{0})}\right)^{-1}\iota_{R(M_{0})^{\bot}}^{\ast}M_{1}\iota_{N(M_{0})^{\bot}}x.\tag*{\qedhere}
\]
 
\end{proof}
\begin{prop}
(a) We have $N(M_{0})^{\bot}\subseteq\textnormal{IV}$ if, and only
if, $\iota_{R(M_{0})^{\bot}}^{\ast}M_{1}\iota_{N(M_{0})^{\bot}}=0$.
In either case we have $N(M_{0})^{\bot}=\mathrm{IV}$.

(b) We have $\textnormal{IV}\cap N(M_{0})^{\bot}=\{0\}$ if, and only
if, $\iota_{R(M_{0})^{\bot}}^{\ast}M_{1}\iota_{N(M_{0})^{\bot}}$
is injective.
\end{prop}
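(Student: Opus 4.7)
The plan is to apply Lemma~\ref{lem:IVchar} in both parts, with the main work being a bookkeeping exercise once we set up convenient abbreviations. Write $S \coloneqq \iota_{R(M_{0})^{\bot}}^{\ast}M_{1}\iota_{N(M_{0})}$ and $T \coloneqq \iota_{R(M_{0})^{\bot}}^{\ast}M_{1}\iota_{N(M_{0})^{\bot}}$. By \prettyref{prop:regPen}, regularity of $\mathcal{M}$ gives that $S$ is a Banach space isomorphism, so in particular $S^{-1}$ is available. \prettyref{lem:IVchar} then reads: $u\in\mathrm{IV}$ if and only if
\[
\iota_{N(M_{0})}^{\ast}u = -S^{-1}T\,\iota_{N(M_{0})^{\bot}}^{\ast}u.
\]

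For part~(a), note that for $u\in N(M_{0})^{\bot}$ one has $\iota_{N(M_{0})}^{\ast}u=0$ and $\iota_{N(M_{0})^{\bot}}^{\ast}u = u$ (viewed as an element of $N(M_{0})^{\bot}$). The characterisation above therefore reduces to $0=-S^{-1}Tu$, and since $S^{-1}$ is injective, this is equivalent to $Tu=0$. Thus $N(M_{0})^{\bot}\subseteq\mathrm{IV}$ iff $T$ vanishes on all of $N(M_{0})^{\bot}$, i.e.\ iff $T=0$. For the ``in either case'' claim, assume $T=0$ and take any $u\in\mathrm{IV}$; then the characterisation forces $\iota_{N(M_{0})}^{\ast}u = -S^{-1}\cdot 0 = 0$, so $u\in N(M_{0})^{\bot}$. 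Combined with the inclusion $N(M_{0})^{\bot}\subseteq\mathrm{IV}$, this yields $\mathrm{IV}=N(M_{0})^{\bot}$.

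For part~(b), the same substitution works. A vector $u\in N(M_{0})^{\bot}$ lies in $\mathrm{IV}$ precisely when $-S^{-1}Tu=0$, equivalently when $Tu=0$. Hence
\[
\mathrm{IV}\cap N(M_{0})^{\bot} = \{\,u\in N(M_{0})^{\bot}\,;\,Tu=0\,\} = N(T),
\]
so $\mathrm{IV}\cap N(M_{0})^{\bot}=\{0\}$ is exactly injectivity of $T = \iota_{R(M_{0})^{\bot}}^{\ast}M_{1}\iota_{N(M_{0})^{\bot}}$.

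There is no genuine obstacle: everything follows by a direct unpacking of \prettyref{lem:IVchar} once the isomorphism $S$ from \prettyref{prop:regPen} is used to absorb the inversion. The only point that requires a moment of care is the second sentence of~(a), where one must read the characterisation backwards (from $\mathrm{IV}$ into $N(M_{0})^{\bot}$) rather than just forwards, in order to upgrade the inclusion to equality.
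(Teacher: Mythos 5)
Your proof is correct, and parts (b) and the equivalence in (a) are exactly the paper's argument: unpack Lemma \ref{lem:IVchar}, use that $\iota_{R(M_{0})^{\bot}}^{\ast}M_{1}\iota_{N(M_{0})}$ is invertible by Proposition \ref{prop:regPen}, and note that for $u\in N(M_{0})^{\bot}$ the characterisation collapses to $\iota_{R(M_{0})^{\bot}}^{\ast}M_{1}\iota_{N(M_{0})^{\bot}}u=0$. The only place where you deviate is the ``in either case'' claim of (a). The paper proves $\mathrm{IV}=N(M_{0})^{\bot}$ by a different mechanism: assuming $N(M_{0})^{\bot}\subseteq\mathrm{IV}$, it observes that the operator $\left(\iota_{R(M_{0})}^{\ast}M_{0}\iota_{\mathrm{IV}}\right)^{-1}\iota_{R(M_{0})}^{\ast}M_{0}\iota_{N(M_{0})^{\bot}}$ restricts to the identity on $N(M_{0})^{\bot}$ while, by Proposition \ref{prop:generator} and Remark \ref{rem:IVcongNbot}, it is an isomorphism from $N(M_{0})^{\bot}$ onto $\mathrm{IV}$; hence the two spaces coincide. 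You instead read Lemma \ref{lem:IVchar} backwards: if $\iota_{R(M_{0})^{\bot}}^{\ast}M_{1}\iota_{N(M_{0})^{\bot}}=0$, then every $u\in\mathrm{IV}$ satisfies $\iota_{N(M_{0})}^{\ast}u=0$, i.e.\ $u\in N(M_{0})^{\bot}$. Your route is more elementary and self-contained (it needs only Lemma \ref{lem:IVchar} and Proposition \ref{prop:regPen}, not Proposition \ref{prop:generator} or Remark \ref{rem:IVcongNbot}), and it in fact delivers the equality $\mathrm{IV}=N(M_{0})^{\bot}$ directly from the hypothesis $\iota_{R(M_{0})^{\bot}}^{\ast}M_{1}\iota_{N(M_{0})^{\bot}}=0$; the paper's argument, by contrast, reuses machinery it has already set up and highlights the isomorphism $N(M_{0})^{\bot}\cong\mathrm{IV}$ that is exploited again later in the text.
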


\begin{proof}
We shall prove (a) first. Using \prettyref{lem:IVchar} we observe
\begin{align*}
N(M_{0})^{\bot}\subseteq\mathrm{IV} & \Longleftrightarrow\forall x\in N(M_{0})^{\bot}:\,\left(\iota_{R(M_{0})^{\bot}}^{\ast}M_{1}\iota_{N(M_{0})}\right)^{-1}\iota_{R(M_{0})^{\bot}}^{\ast}M_{1}\iota_{N(M_{0})^{\bot}}x=0\\
 & \Longleftrightarrow\iota_{R(M_{0})^{\bot}}^{\ast}M_{1}\iota_{N(M_{0})^{\bot}}=0,
\end{align*}
which shows the asserted equivalence. Moreover, if $N(M_{0})^{\bot}\subseteq\mathrm{IV},$
we infer
\[
\left(\iota_{R(M_{0})}^{\ast}M_{0}\iota_{\textnormal{IV}}\right)^{-1}\iota_{R(M_{0})}^{\ast}M_{0}\iota_{N(M_{0})^{\bot}}=1_{N(M_{0})^{\bot}},
\]
where we used that by \prettyref{prop:generator} $\iota_{R(M_{0})}^{\ast}M_{0}\iota_{\textnormal{IV}}$
is an isomorphism. As 
\[
\left(\iota_{R(M_{0})}^{\ast}M_{0}\iota_{\textnormal{IV}}\right)^{-1}\iota_{R(M_{0})}^{\ast}M_{0}\iota_{N(M_{0})^{\bot}}
\]
 is an isomorphism mapping $N(M_{0})^{\bot}$ onto $\mathrm{IV}$
(see \prettyref{rem:IVcongNbot}), we deduce $N(M_{0})^{\bot}=\textnormal{IV}$. 

For the proof of (b), we observe that by \prettyref{lem:IVchar}
\begin{align*}
x\in\mathrm{IV}\cap N(M_{0})^{\bot} & \Longleftrightarrow x\in N(M_{0})^{\bot}\wedge\left(\iota_{R(M_{0})^{\bot}}^{\ast}M_{1}\iota_{N(M_{0})}\right)^{-1}\iota_{R(M_{0})^{\bot}}^{\ast}M_{1}\iota_{N(M_{0})^{\bot}}x=0\\
 & \Longleftrightarrow x\in\ker(\iota_{R(M_{0})^{\bot}}^{\ast}M_{1}\iota_{N(M_{0})^{\bot}}).
\end{align*}
Hence, $\mathrm{IV}\cap N(M_{0})^{\bot}=\{0\}$ if and only if $\iota_{R(M_{0})^{\bot}}^{\ast}M_{1}\iota_{N(M_{0})^{\bot}}$
is one-to-one.
\end{proof}
\begin{thm}
We have $\sigma_{\textnormal{IV}}(\mathcal{M})=\sigma(\mathcal{M}).$
\end{thm}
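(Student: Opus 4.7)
The plan is to exhibit a Banach space isomorphism $\Phi\colon N(M_0)^{\bot}\to\textnormal{IV}$ under which the operator
\[
A\coloneqq\left(\iota_{R(M_{0})}^{\ast}M_{0}\iota_{\mathrm{IV}}\right)^{-1}\left(\iota_{R(M_{0})}^{\ast}M_{1}\iota_{\mathrm{IV}}\right)\in L(\mathrm{IV})
\]
defining $\sigma_{\mathrm{IV}}(\mathcal{M})=\sigma(-A)$ is similar to the operator $B\coloneqq\tilde{M}_{0}^{-1}\tilde{M}_{1}\in L(N(M_{0})^{\bot})$. Since \prettyref{prop:regPen} gives $\sigma(\mathcal{M})=\sigma(-B)$, similarity of $-A$ and $-B$ will yield $\sigma_{\mathrm{IV}}(\mathcal{M})=\sigma(\mathcal{M})$.

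The natural candidate, highlighted in \prettyref{rem:IVcongNbot}, is
\[
\Phi\coloneqq\left(\iota_{R(M_{0})}^{\ast}M_{0}\iota_{\mathrm{IV}}\right)^{-1}\iota_{R(M_{0})}^{\ast}M_{0}\iota_{N(M_{0})^{\bot}}\colon N(M_{0})^{\bot}\to\textnormal{IV},
\]
which is a Banach space isomorphism by \prettyref{prop:generator} and the fact that $\iota_{R(M_{0})}^{\ast}M_{0}\iota_{N(M_{0})^{\bot}}$ is a Banach space isomorphism. By construction we have the identity $\iota_{R(M_{0})}^{\ast}M_{0}\iota_{\mathrm{IV}}\Phi=\iota_{R(M_{0})}^{\ast}M_{0}\iota_{N(M_{0})^{\bot}}=\tilde{M}_{0}$, so everything reduces to proving the analogous identity for $M_{1}$, namely $\iota_{R(M_{0})}^{\ast}M_{1}\iota_{\mathrm{IV}}\Phi=\tilde{M}_{1}$. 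Granted this,
\[
A\Phi=\left(\iota_{R(M_{0})}^{\ast}M_{0}\iota_{\mathrm{IV}}\right)^{-1}\tilde{M}_{1}=\Phi\tilde{M}_{0}^{-1}\tilde{M}_{1}=\Phi B,
\]
and the conclusion follows.

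The main step is the $M_{1}$-identity. For $x\in N(M_{0})^{\bot}$, set $u\coloneqq\iota_{\mathrm{IV}}\Phi x\in H$. By definition of $\Phi$ one has $\iota_{R(M_{0})}^{\ast}M_{0}(u-\iota_{N(M_{0})^{\bot}}x)=0$, and since $M_{0}(u-\iota_{N(M_{0})^{\bot}}x)\in R(M_{0})$, the injectivity of $\iota_{R(M_{0})}^{\ast}$ on $R(M_{0})$ forces $u-\iota_{N(M_{0})^{\bot}}x\in N(M_{0})$. Writing $u=\iota_{N(M_{0})^{\bot}}x+\iota_{N(M_{0})}y$, the condition $u\in\textnormal{IV}$ together with \prettyref{lem:IVchar} identifies $y=-\bigl(\iota_{R(M_{0})^{\bot}}^{\ast}M_{1}\iota_{N(M_{0})}\bigr)^{-1}\iota_{R(M_{0})^{\bot}}^{\ast}M_{1}\iota_{N(M_{0})^{\bot}}x$. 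Applying $\iota_{R(M_{0})}^{\ast}M_{1}$ to this decomposition and comparing with the explicit formula for $\tilde{M}_{1}$ in \prettyref{prop:regPen} gives $\iota_{R(M_{0})}^{\ast}M_{1}u=\tilde{M}_{1}x$, as required.

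The only subtle point in the plan is the identification of $y$ via \prettyref{lem:IVchar}; the rest is bookkeeping with the block decomposition from \prettyref{prop:regPen}. Once similarity $A\Phi=\Phi B$ is established, the equality of spectra is immediate and no further compactness or functional calculus arguments are needed.
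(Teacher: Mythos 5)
Your argument is correct and follows essentially the same route as the paper: the paper likewise combines \prettyref{lem:IVchar} with the block formulas for $\tilde{M}_{0},\tilde{M}_{1}$ from \prettyref{prop:regPen}, passing for each $z$ between the equation $z\iota_{R(M_{0})}^{\ast}M_{0}\iota_{\mathrm{IV}}v+\iota_{R(M_{0})}^{\ast}M_{1}\iota_{\mathrm{IV}}v=u$ on $\mathrm{IV}$ and the equation $z\tilde{M}_{0}x+\tilde{M}_{1}x=u$ on $N(M_{0})^{\bot}$ via the correspondence $v\mapsto\iota_{N(M_{0})^{\bot}}^{\ast}v$, which is precisely the inverse of your $\Phi$ from \prettyref{rem:IVcongNbot}. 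Packaging this once and for all as the $z$-independent similarity $A\Phi=\Phi B$, so that $\sigma(-A)=\sigma(-B)$, is a correct and slightly tidier restatement of the same computation.
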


\begin{proof}
Recall from \prettyref{prop:regPen} that 
\[
\sigma(\mathcal{M})=\sigma(-\tilde{M}_{0}^{-1}\tilde{M}_{1}),
\]
where 
\[
\tilde{M}_{0}=\iota_{R(M_{0})}^{\ast}M_{0}\iota_{N(M_{0})^{\bot}}
\]
and 
\[
\tilde{M}_{1}=\iota_{R(M_{0})}^{*}M_{1}\iota_{N(M_{0})^{\bot}}-\iota_{R(M_{0})}^{\ast}M_{1}\iota_{N(M_{0})}\left(\iota_{R(M_{0})^{\bot}}^{\ast}M_{1}\iota_{N(M_{0})}\right)^{-1}\iota_{R(M_{0})^{\bot}}M_{1}\iota_{N(M_{0})^{\bot}}.
\]
By definition, we have
\[
\sigma_{\textnormal{IV}}(\mathcal{M})=\sigma\left(-\left(\iota_{R(M_{0})}^{\ast}M_{0}\iota_{\textnormal{IV}}\right)^{-1}\iota_{R(M_{0})}^{\ast}M_{1}\iota_{\textnormal{IV}}\right).
\]
For $z\in\mathbb{C}$ we show $z\tilde{M}_{0}+\tilde{M}_{1}$ is continuously
invertible if, and only if, 
\[
z\iota_{R(M_{0})}^{\ast}M_{0}\iota_{\textnormal{IV}}+\iota_{R(M_{0})}^{\ast}M_{1}\iota_{\textnormal{IV}}\text{ is continuously invertible.}
\]
For this, let $u\in R(M_{0})$ and $v\in\textnormal{IV}$ be such
that 
\[
z\iota_{R(M_{0})}^{\ast}M_{0}\iota_{\textnormal{IV}}v+\iota_{R(M_{0})}^{\ast}M_{1}\iota_{\textnormal{IV}}v=u.
\]
The latter is the same as saying 
\begin{align*}
u & =z\iota_{R(M_{0})}^{\ast}M_{0}\left(\begin{array}{cc}
\iota_{N(M_{0})^{\bot}} & \iota_{N(M_{0})}\end{array}\right)\left(\begin{array}{c}
\iota_{N(M_{0})^{\bot}}^{*}\\
\iota_{N(M_{0})}^{*}
\end{array}\right)\iota_{\textnormal{IV}}v\\
 & \quad+\iota_{R(M_{0})}^{\ast}M_{1}\left(\begin{array}{cc}
\iota_{N(M_{0})^{\bot}} & \iota_{N(M_{0})}\end{array}\right)\left(\begin{array}{c}
\iota_{N(M_{0})^{\bot}}^{*}\\
\iota_{N(M_{0})}^{*}
\end{array}\right)\iota_{\textnormal{IV}}v
\end{align*}
By \prettyref{lem:IVchar}, we deduce that the latter can be written
as
\begin{align*}
u & =\left(z\iota_{R(M_{0})}^{\ast}M_{0}\begin{array}{cc}
\iota_{N(M_{0})^{\bot}} & 0\end{array}\right)\left(\begin{array}{c}
\iota_{N(M_{0})^{\bot}}^{*}v\\
-\left(\iota_{R(M_{0})^{\bot}}^{\ast}M_{1}\iota_{N(M_{0})}\right)^{-1}\iota_{R(M_{0})^{\bot}}^{\ast}M_{1}\iota_{N(M_{0})^{\bot}}\iota_{N(M_{0})^{\bot}}^{*}v
\end{array}\right)\\
 & \quad+\left(\iota_{R(M_{0})}^{\ast}M_{1}\begin{array}{cc}
\iota_{N(M_{0})^{\bot}} & \iota_{R(M_{0})}^{\ast}M_{1}\iota_{N(M_{0})}\end{array}\right)\times\\
 & \quad\times\left(\begin{array}{c}
\iota_{N(M_{0})^{\bot}}^{*}v\\
-\left(\iota_{R(M_{0})^{\bot}}^{\ast}M_{1}\iota_{N(M_{0})}\right)^{-1}\iota_{R(M_{0})^{\bot}}^{\ast}M_{1}\iota_{N(M_{0})^{\bot}}\iota_{N(M_{0})^{\bot}}^{*}v
\end{array}\right)\\
 & =z\tilde{M}_{0}\iota_{N(M_{0})^{\bot}}^{\ast}v+\tilde{M}_{1}\iota_{N(M_{0})^{\bot}}^{\ast}v.
\end{align*}
Hence, for $u\in R(M_{0})$ and $v\in\textnormal{IV}$ the equation
\begin{equation}
z\iota_{R(M_{0})}^{\ast}M_{0}\iota_{\textnormal{IV}}v+\iota_{R(M_{0})}^{\ast}M_{1}\iota_{\textnormal{IV}}v=u\label{eq:IVeq}
\end{equation}
implies
\begin{equation}
z\tilde{M}_{0}\iota_{N(M_{0})^{\bot}}^{\ast}v+\tilde{M}_{1}\iota_{N(M_{0})^{\bot}}^{\ast}v=u.\label{eq:peneq}
\end{equation}
On the other hand, following the argument in reverse direction, we
get that if $x\in N(M_{0})^{\bot}$ solves \prettyref{eq:peneq},
then 
\[
v\coloneqq\left(\begin{array}{cc}
\iota_{N(M_{0})^{\bot}} & \iota_{N(M_{0})}\end{array}\right)\left(\begin{array}{c}
\iota_{N(M_{0})^{\bot}}^{*}x\\
-\left(\iota_{R(M_{0})^{\bot}}^{\ast}M_{1}\iota_{N(M_{0})}\right)^{-1}\iota_{R(M_{0})^{\bot}}^{\ast}M_{1}\iota_{N(M_{0})^{\bot}}\iota_{N(M_{0})^{\bot}}^{*}x
\end{array}\right)
\]
solves \prettyref{eq:IVeq}. Note that $v\in\textnormal{IV}$, by
\prettyref{lem:IVchar}. The assertion follows. 
\end{proof}
In the next section we discuss the asymptotic properties of the solutions
of both initial value problems \prettyref{eq:ivp_1} and \prettyref{eq:mIVP}. 

\section{\label{sec:Asymptotics}Asymptotic Properties}

In the previous section, we have shown that the spectra connected
to strong and mild solutions coincide. Hence, one might argue that
the asymptotic properties (i.e., exponential stability or instability)
of solutions to \prettyref{eq:ivp_1} and \prettyref{eq:mIVP} are
the same. For this, we note here that for mild solutions it does not
make sense to talk about pointwise properties since mild solutions
are in a certain $L_{2}$-space, only. We shall however introduce
the correct concepts anticipating this problem. It will turn out that
the exponential weight in the $L_{2}$-spaces considered in this exposition
can be used to define asymptotically stable parts. 

We assume throughout this section that $\mathcal{M}$ is a regular
linear operator pencil associated with $(M_{0},M_{1})$ and that the
range of $M_{0}$ is a closed subspace of $H$. Moreover, we consider
the linear pencil $\mathcal{N}$ associated with $(M_{0},-M_{1}).$
Since $\mathcal{M}$ is regular, we have by \prettyref{prop:regPen}
that $\sigma(\mathcal{M})$ is compact and that $\|\mathcal{M}(z)^{-1}\|$
is uniformly bounded outside a ball $B(0,\rho)\subseteq\mathbb{C}$
for some $\rho>0$ large enough. Since 
\[
\mathcal{N}(z)=zM_{0}-M_{1}=-\left(-zM_{0}+M_{1}\right)=-\mathcal{M}(-z),\quad(z\in\mathbb{C})
\]
we infer that also $\mathcal{N}$ is regular and $\|\mathcal{N}(z)^{-1}\|$
is uniformly bounded outside the same ball $B(0,\rho).$ We first
show that there is a strong connection between the solutions of the
initial value problems induced by $\mathcal{M}$ and by $\mathcal{N}$. 

We recall from \prettyref{thm:IVP_2} that for each $u_{0}\in H$
the mild solution $u$ of \prettyref{eq:ivp_1} satisfies
\[
u=\mathcal{L}_{\rho}^{\ast}\left(t\mapsto\frac{1}{\sqrt{2\pi}}\left(\mathcal{M}(\i t+\rho)\right)^{-1}M_{0}u_{0}\right).
\]
Moreover, we have that $\mathbb{R}_{\geq0}\ni t\mapsto\left(M_{0}u\right)(t)$
is a continuous function. Hence, point-evaluation of $M_{0}u$ is
well-defined. This will be used in the next statement.
\begin{prop}
\label{prop:forwadr_backward} Let $\mathcal{N}$ be the linear pencil
associated with $(M_{0},-M_{1})$ and $\rho>0$ large enough, such
that $\sup_{z\in\mathbb{C}\setminus B(0,\rho)}\|\mathcal{M}(z)^{-1}\|<\infty.$
Moreover, let $u_{0}\in H$ and let $u\in L_{2,\rho}(\mathbb{R};H)$
be given by 
\[
u\coloneqq\mathcal{L}_{\rho}^{\ast}\left(t\mapsto\frac{1}{\sqrt{2\pi}}\left(\mathcal{M}(\i t+\rho)\right)^{-1}M_{0}u_{0}\right).
\]
Let $T>0$ and define 
\[
w\coloneqq\mathcal{L}_{\rho}^{\ast}\left(t\mapsto\frac{1}{\sqrt{2\pi}}\left(\mathcal{N}(\i t+\rho)\right)^{-1}\left(M_{0}u\right)(T)\right).
\]
Then $w=u(T-\cdot)$ on $[0,T].$ 
\end{prop}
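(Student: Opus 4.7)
The plan is to compute the Fourier\textendash Laplace transform of the truncated reflection
\[
\tilde{v}(t)\coloneqq \chi_{[0,T]}(t)\,u(T-t)
\]
and identify it with $\mathcal{L}_{\rho}w$ plus a causal remainder supported on $[T,\infty)$. The key analytic input is the identity
\[
\mathcal{N}(z)\int_{0}^{T}\e^{zs}u(s)\,\mathrm{d}s\;=\;\e^{zT}(M_{0}u)(T)-M_{0}u_{0} \qquad (z\in\varrho(\mathcal{N})).
\]
To establish it, I would use that, by \prettyref{thm:IVP_2}, $M_{0}u$ is absolutely continuous on $[0,T]$ with $(M_{0}u)'=-M_{1}u$ almost everywhere and $M_{0}u(0+)=M_{0}u_{0}$; integration by parts against $\e^{zs}$ on $[0,T]$ then produces exactly the stated identity after rearranging.

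For the second step, the change of variables $s\mapsto T-s$ gives
\[
(\mathcal{L}_{\rho}\tilde{v})(t)=\frac{\e^{-(\i t+\rho)T}}{\sqrt{2\pi}}\int_{0}^{T}\e^{(\i t+\rho)s}u(s)\,\mathrm{d}s.
\]
Substituting the identity above with $z=\i t+\rho$ (which lies in $\varrho(\mathcal{N})$ for $\rho$ large enough, since $\mathcal{N}(z)=-\mathcal{M}(-z)$ and $\mathcal{M}$ is regular) and inverting $\mathcal{N}(\i t+\rho)$ produces
\[
(\mathcal{L}_{\rho}\tilde{v})(t)=\tfrac{1}{\sqrt{2\pi}}\mathcal{N}(\i t+\rho)^{-1}(M_{0}u)(T)\;-\;\e^{-(\i t+\rho)T}\cdot\tfrac{1}{\sqrt{2\pi}}\mathcal{N}(\i t+\rho)^{-1}M_{0}u_{0}.
\]
The first summand is $(\mathcal{L}_{\rho}w)(t)$ by definition of $w$. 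Letting $v_{T}\in L_{2,\rho}(\mathbb{R};H)$ denote the mild solution of the backward problem associated with $\mathcal{N}$ and initial datum $u_{0}$, so that $(\mathcal{L}_{\rho}v_{T})(t)=\tfrac{1}{\sqrt{2\pi}}\mathcal{N}(\i t+\rho)^{-1}M_{0}u_{0}$, and observing that multiplication by $\e^{-(\i t+\rho)T}$ on the frequency side corresponds to a time-shift by $T$, the second summand is $\mathcal{L}_{\rho}\bigl(v_{T}(\cdot-T)\bigr)(t)$.

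For the third step, I would apply $\mathcal{L}_{\rho}^{\ast}$ to obtain $\tilde{v}=w-v_{T}(\cdot-T)$ in $L_{2,\rho}(\mathbb{R};H)$. Since $\spt v_{T}\subseteq\mathbb{R}_{\geq0}$ by \prettyref{thm:IVP_2}, the shift $v_{T}(\cdot-T)$ vanishes on $(-\infty,T)$, and hence $w(t)=\tilde{v}(t)=u(T-t)$ almost everywhere on $[0,T]$, as claimed.

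The main obstacle is justifying the integration-by-parts identity, since $u$ itself need not be differentiable and only $M_{0}u$ is absolutely continuous. This is handled by arguing purely at the level of the mild equation: substituting $M_{0}u(s)=M_{0}u_{0}-\int_{0}^{s}M_{1}u(r)\,\mathrm{d}r$ into $z\int_{0}^{T}\e^{zs}M_{0}u(s)\,\mathrm{d}s$ and applying Fubini is legitimate because $u\in L_{1}([0,T];H)$, and after collecting terms the identity drops out. Once this is in place, the remaining steps are direct bookkeeping with the Fourier\textendash Laplace transform.
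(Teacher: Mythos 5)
Your proposal is correct and follows essentially the same route as the paper: the key identity $\mathcal{N}(z)\int_{0}^{T}\e^{zs}u(s)\,\mathrm{d}s=\e^{zT}(M_{0}u)(T)-M_{0}u_{0}$, obtained from the mild equation of \prettyref{thm:IVP_2} via Fubini, is exactly the paper's central computation, and your causality argument (killing the term $\mathcal{N}(\i\cdot+\rho)^{-1}M_{0}u_{0}$, i.e.\ the time-shifted backward solution $S(\mathcal{N})u_{0}(\cdot-T)$, on $[0,T)$ via the support statement of \prettyref{thm:IVP_2}) is the same mechanism the paper uses for its function $v$, which coincides with your $w-v_{T}(\cdot-T)$ after a shift. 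The only difference is bookkeeping: you transform the truncated reflection $\chi_{[0,T]}u(T-\cdot)$ directly, while the paper shifts the difference $w(\cdot+T)-u(-\cdot)$ and identifies it with $S(\mathcal{N})u_{0}$.
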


\begin{proof}
We consider the following function 
\[
v(t)\coloneqq\chi_{\mathbb{R}_{\geq0}}(t+T)(w(t+T)-u(-t))\quad(t\in\mathbb{R}).
\]
We note that $v\in L_{2,\rho}(\mathbb{R};H)$ since $u,w\in L_{2,\rho}(\mathbb{R};H)$
and $\spt u\subseteq\mathbb{R}_{\geq0}$ by \prettyref{thm:IVP_2}.
We claim that 
\begin{equation}
\left(\mathcal{L}_{\rho}v\right)(t)=\left(\mathcal{N}(\i t+\rho)\right)^{-1}M_{0}u_{0}.\label{eq:eq_v}
\end{equation}
If the latter is true, the assertion follows, since then $\spt v\subseteq\mathbb{R}_{\geq0}$
by \prettyref{thm:IVP_2} and thus, 
\[
w(t)-u(T-t)=v(t-T)=0\quad(t\in[0,T]\text{ a.e.}).
\]
To verify \prettyref{eq:eq_v}, we compute using that $\spt u,\spt w\subseteq\mathbb{R}_{\geq0}$,
\begin{align*}
\left(\mathcal{L}_{\rho}v\right)(t) & =\e^{(\i t+\rho)T}\left(\mathcal{\mathcal{L}_{\rho}}w(t)\right)-\frac{1}{\sqrt{2\pi}}\intop_{-T}^{0}\e^{-(\i t+\rho)s}u(-s)\text{ d}s\quad(t\in\mathbb{R})
\end{align*}
and thus, 
\[
\mathcal{N}(\i t+\rho)\left(\mathcal{L}_{\rho}v\right)(t)=\e^{(\i t+\rho)T}\frac{1}{\sqrt{2\pi}}\left(M_{0}u\right)(T)-\frac{1}{\sqrt{2\pi}}\intop_{-T}^{0}\e^{-(\i t+\rho)s}\mathcal{N}(\i t+\rho)u(-s)\text{ d}s\quad(t\in\mathbb{R}).
\]
Since $u$ satisfies \prettyref{eq:IVP_2} by \prettyref{thm:IVP_2},
we may compute 
\begin{align*}
 & \intop_{-T}^{0}\e^{-(\i t+\rho)s}\mathcal{N}(\i t+\rho)u(-s)\text{ d}s\\
 & =\intop_{-T}^{0}\e^{-(\i t+\rho)s}\left(\left(\i t+\rho\right)M_{0}u(-s)-M_{1}u(-s)\right)\text{ d}s\\
 & =\intop_{-T}^{0}\e^{-(\i t+\rho)s}\left(\left(\i t+\rho\right)M_{0}u_{0}-(\i t+\rho)\intop_{0}^{-s}M_{1}u(r)\text{ d}r-M_{1}u(-s)\right)\text{ d}s.
\end{align*}
Since 
\begin{align*}
\intop_{-T}^{0}-\e^{-(\i t+\rho)s}(\i t+\rho)\intop_{0}^{-s}M_{1}u(r)\text{ d}r\text{ d}s & =\intop_{0}^{T}M_{1}u(r)\intop_{-T}^{-r}-\e^{-(\i t+\rho)s}(\i t+\rho)\text{ d}s\text{ d}r\\
 & =\intop_{0}^{T}M_{1}u(r)\left(\e^{(\i t+\rho)r}-\e^{(\i t+\rho)T}\right)\text{ d}r\\
 & =\intop_{-T}^{0}M_{1}u(-s)\left(\e^{-(\i t+\rho)s}-\e^{(\i t+\rho)T}\right)\text{ d}s
\end{align*}
we infer that 
\begin{align*}
 & \intop_{-T}^{0}\e^{-(\i t+\rho)s}\mathcal{N}(\i t+\rho)u(-s)\text{ d}s\\
 & =-(1-\e^{(\i t+\rho)T})M_{0}u_{0}-\intop_{-T}^{0}\e^{-(\i t+\rho)s}M_{1}u(-s)\text{ d}s+\intop_{-T}^{0}M_{1}u(-s)\left(\e^{-(\i t+\rho)s}-\e^{(\i t+\rho)T}\right)\text{ d}s\\
 & =\e^{(\i t+\rho)T}\left(M_{0}u_{0}-\intop_{-T}^{0}M_{1}u(-s)\text{ d}s\right)-M_{0}u_{0}\\
 & =\e^{(\i t+\rho)T}\left(M_{0}u\right)(T)-M_{0}u_{0},
\end{align*}
where we have again used \prettyref{thm:IVP_2}. Summarising, we obtain
\begin{align*}
\mathcal{N}(\i t+\rho)\left(\mathcal{L}_{\rho}v\right)(t) & =\e^{(\i t+\rho)T}\frac{1}{\sqrt{2\pi}}\left(M_{0}u\right)(T)-\frac{1}{\sqrt{2\pi}}\intop_{-T}^{0}\e^{-(\i t+\rho)s}\mathcal{N}(\i t+\rho)u(-s)\text{ d}s\\
 & =\frac{1}{\sqrt{2\pi}}M_{0}u_{0},
\end{align*}
which proves \prettyref{eq:eq_v}.
\end{proof}
We introduce the following two solution operators: Let $\mathcal{N}$
the linear pencil associated with $(M_{0},-M_{1})$ and let $\rho>0$
such that $\sup_{z\in\mathbb{C}\setminus B(0,\rho)}\|\mathcal{M}(z)^{-1}\|<\infty.$
Then we define 
\begin{align*}
S(\mathcal{M}):H & \to L_{2,\rho}(\mathbb{R};H)\\
u_{0} & \mapsto\mathcal{L}_{\rho}^{\ast}\left(t\mapsto\frac{1}{\sqrt{2\pi}}\left(\mathcal{M}(\i t+\rho)\right)^{-1}M_{0}u_{0}\right)
\end{align*}
and 
\begin{align*}
S(\mathcal{N}):H & \to L_{2,\rho}(\mathbb{R};H)\\
w_{0} & \mapsto\mathcal{L}_{\rho}^{\ast}\left(t\mapsto\frac{1}{\sqrt{2\pi}}\left(\mathcal{N}(\i t+\rho)\right)^{-1}M_{0}w_{0}\right).
\end{align*}
Using this notation, we realise that \prettyref{prop:forwadr_backward}
states that for all $u_{0}\in H$ and $T>0$
\[
S(\mathcal{N})\left(\left(S(\mathcal{M})u_{0}\right)(T)\right)=\left(S(\mathcal{M})u_{0}\right)(T-\cdot)
\]
on $[0,T].$\\
\begin{defn}
Let $S\subseteq\mathrm{IV}$ be a closed subspace. Then we call $S$
\emph{invariant under $\mathcal{M}$}, if for all $u_{0}\in S$, we
have that $u(t)\in S$ for all $t\in\mathbb{R}_{>0}$, where $u$
is the (strong) solution of \prettyref{eq:ivp_1}.
\end{defn}

\begin{rem}
Let $\rho>0$ be such that $\sup_{z\in\mathbb{C}\setminus B(0,\rho)}\|\mathcal{M}(z)^{-1}\|<\infty$.
By \prettyref{prop:LaplaceRep}, we note that the invariance of the
closed subspace $S\subseteq\mathrm{IV}$ is equivalent to the condition
\[
S(\mathcal{M})[S]\subseteq L_{2,\mu}(\mathbb{R};S),
\]
for all $\mu\geq\rho$, which in turn is equivalent to 
\[
\left(\left(\mathcal{M}(\i t+\mu)\right)^{-1}M_{0}\right)[S]\subseteq S
\]
for every $t\in\mathbb{R},\mu\geq\rho.$ By the identity theorem,
the latter is equivalent to 
\[
\left(\left(\mathcal{M}(z)\right)^{-1}M_{0}\right)[S]\subseteq S
\]
for all $z\in\mathbb{C}\setminus B(0,\rho)$. 
\end{rem}

\begin{defn}
We say that $\mathcal{M}$ admits a \emph{strong exponential dichotomy},
if there exist closed subspaces $S,T\subseteq\textnormal{IV}$ with
the following properties: $S\dotplus T=\textnormal{IV},$ $S$ and
$T$ are invariant under $\mathcal{M}$ and there is $\rho>0,C\geq0$
such that for all $t\geq0$ 
\[
\begin{cases}
|u(t)|_{H}\leq C\e^{-\rho t}|u_{0}|_{H} & u_{0}\in S,\\
|u(t)|_{H}\geq C\e^{\rho t}|u_{0}|_{H} & u_{0}\in T,
\end{cases}
\]
where $u\coloneqq S(\mathcal{M})u_{0}.$ Moreover, we say that $\mathcal{M}$
is \emph{strongly exponentially stable}, if it admits a strong exponential
dichotomy with $S=\mathrm{IV}$ and $T=\{0\}.$ 
\end{defn}

We now reformulate the exponential growth on the subspace $T$ as
an exponential decay for the pencil $\mathcal{N}.$ We note that the
$\mathrm{IV}$-spaces for $\mathcal{M}$ and $\mathcal{N}$ coincide,
which is a consequence of $\mathrm{IV}=M_{1}^{-1}[R(M_{0})]=-M_{1}^{-1}[R(M_{0})]$,
by the linearity of $M_{1}$ and $M_{0}$.
\begin{lem}
Let $T\subseteq\mathrm{IV}$ be a closed subspace and invariant under
$\mathcal{M}$, $\rho>0$. Then the following statements are equivalent:

\begin{enumerate}[(i)]

\item There exists $C>0$ such that 
\[
|\left(S(\mathcal{M})u_{0}\right)(t)|_{H}\geq C\e^{\rho t}|u_{0}|_{H}
\]
for all $t\geq0,u_{0}\in T$. 

\item There exists $C>0$ such that 
\[
\left|\left(S(\mathcal{N})w_{0}\right)(t)\right|_{H}\leq C\e^{-\rho t}|w_{0}|_{H}
\]
for all $t\geq0,w_{0}\in T.$

\end{enumerate}
\end{lem}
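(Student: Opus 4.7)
The plan is to reduce both quantities to the exponential of a single bounded operator on $\mathrm{IV}$ and then read off the equivalence algebraically. Set $A\coloneqq\left(\iota_{R(M_{0})}^{\ast}M_{0}\iota_{\mathrm{IV}}\right)^{-1}\left(\iota_{R(M_{0})}^{\ast}M_{1}\iota_{\mathrm{IV}}\right)\in L(\mathrm{IV})$. By \prettyref{cor:semigroupIVP}, for $u_{0}\in\mathrm{IV}$ and $t\geq 0$ the strong solution is $\left(S(\mathcal{M})u_{0}\right)(t)=\e^{-tA}u_{0}$, so pointwise evaluation is meaningful. Because $\mathcal{N}$ is obtained from $\mathcal{M}$ by replacing $M_{1}$ with $-M_{1}$ and, as noted just before the lemma, the $\mathrm{IV}$-spaces for $\mathcal{M}$ and $\mathcal{N}$ coincide, the same corollary applied to $\mathcal{N}$ yields $\left(S(\mathcal{N})w_{0}\right)(t)=\e^{tA}w_{0}$ for $w_{0}\in\mathrm{IV}$, $t\geq 0$. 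Since $A$ is bounded, $\{\e^{tA}\}_{t\in\mathbb{R}}$ is a uniformly continuous group in $L(\mathrm{IV})$ with $\e^{tA}\e^{-tA}=\mathrm{id}_{\mathrm{IV}}$.

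Next I would verify that this group leaves the closed subspace $T$ invariant for \emph{all} $t\in\mathbb{R}$. By hypothesis $\e^{-tA}[T]\subseteq T$ for $t\geq 0$. For $x\in T$ the difference quotient $t^{-1}(\e^{-tA}x-x)$ lies in $T$ and converges to $-Ax$ as $t\to 0^{+}$; closedness of $T$ yields $Ax\in T$, hence $A^{n}x\in T$ for every $n\in\mathbb{N}_{0}$. The norm-convergent series $\e^{tA}x=\sum_{n=0}^{\infty}(tA)^{n}x/n!$ then places $\e^{tA}x\in T$ for every $t\in\mathbb{R}$. Consequently $\e^{tA}|_{T}$ and $\e^{-tA}|_{T}$ are well-defined and mutually inverse elements of $L(T)$.

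With these identifications in place the equivalence is essentially a single line. Condition (i) asserts $|\e^{-tA}u_{0}|_{H}\geq C\e^{\rho t}|u_{0}|_{H}$ for every $u_{0}\in T$ and $t\geq 0$, which (since $\e^{-tA}|_{T}$ is invertible on $T$) is equivalent to $\|\e^{tA}|_{T}\|_{L(T)}\leq C^{-1}\e^{-\rho t}$; applying this to an arbitrary $w_{0}\in T$ and using $\e^{tA}w_{0}=\left(S(\mathcal{N})w_{0}\right)(t)$ gives (ii) with constant $C^{-1}$. The reverse direction follows by the symmetric argument, interchanging the roles of $\e^{tA}|_{T}$ and $\e^{-tA}|_{T}$. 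The only non-routine step is the invariance of $T$ under the inverse group $\e^{tA}$, and this rests squarely on the boundedness of $A$, which is itself a consequence of the regularity of $\mathcal{M}$ via \prettyref{prop:regPen} (the pencil having index $0$ is precisely what makes the generator on $\mathrm{IV}$ bounded and hence the semigroup a group).
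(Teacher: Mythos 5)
Your proof is correct, but it takes a genuinely different route from the paper. You identify both solution operators on $\mathrm{IV}$ with the uniformly continuous group generated by the bounded operator $A=\left(\iota_{R(M_{0})}^{\ast}M_{0}\iota_{\mathrm{IV}}\right)^{-1}\left(\iota_{R(M_{0})}^{\ast}M_{1}\iota_{\mathrm{IV}}\right)$ via \prettyref{cor:semigroupIVP} and \prettyref{cor:IVsemLapl} (and their analogues for $\mathcal{N}$, which are available since $\mathcal{N}$ is regular and the $\mathrm{IV}$-spaces coincide), prove invariance of $T$ under the \emph{full} group by the difference-quotient argument $A[T]\subseteq T$ plus the exponential series, and then read off the equivalence from $\e^{tA}|_{T}=\left(\e^{-tA}|_{T}\right)^{-1}$; the key steps (invertibility of $\e^{-tA}|_{T}$ on $T$, surjectivity included) are all justified. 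The paper instead never invokes the group structure: it shows that $T$ is invariant under $\mathcal{N}$ via the resolvent characterisation $\left(\mathcal{N}(z)^{-1}M_{0}\right)[T]=\left(-\mathcal{M}(-z)^{-1}M_{0}\right)[T]\subseteq T$, and then transfers the lower bound for $S(\mathcal{M})$ into the upper bound for $S(\mathcal{N})$ using the time-reversal identity of \prettyref{prop:forwadr_backward}, i.e.\ $u=w(t-\cdot)$ on $[0,t]$ with $u=S(\mathcal{M})(w(t))$, so that $u(t)=w_{0}$. What your argument buys is brevity and transparency: on $\mathrm{IV}$ everything is a bounded group and the statement becomes linear algebra, with the same relation $C\mapsto C^{-1}$ between the constants. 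What the paper's argument buys is that it operates purely at the level of the solution operators $S(\mathcal{M})$, $S(\mathcal{N})$ and \prettyref{prop:forwadr_backward}, i.e.\ without pointwise exponential representations; this is the template that survives in the mild setting (subspaces of $N(M_{0})^{\bot}$, solutions only in $L_{2,\rho}$), whereas your argument is intrinsically tied to the strong, index-zero situation $T\subseteq\mathrm{IV}$ where the generator is bounded.
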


\begin{proof}
(i) $\Rightarrow$ (ii): Let $w_{0}\in T$ and set $w\coloneqq S(\mathcal{N})w_{0}$.
We first prove that $w(t)\in T$ for each $t\geq0.$ Indeed, since
\[
\left(\left(\mathcal{M}(z)\right)^{-1}M_{0}\right)[T]\subseteq T,
\]
we infer that also
\[
\left(\left(\mathcal{N}(z)\right)^{-1}M_{0}\right)[T]=\left(-\left(\mathcal{M}(-z)\right)^{-1}M_{0}\right)[T]\subseteq T
\]
for all $z\in\mathbb{C}\setminus B(0,\rho')$ for $\rho'$ large enough.
Hence, $T$ is invariant under $\mathcal{N}$ and thus $w(t)\in T$
for each $t\geq0.$ We fix $t>0$ and set $u\coloneqq S(\mathcal{M})\left(w(t)\right).$
Since $w(t)\in T$ we obtain 
\begin{equation}
|u(s)|_{H}\geq C\e^{\rho s}|w(t)|_{H}\label{eq:unstable}
\end{equation}
for each $s\geq0.$ Using now \prettyref{prop:forwadr_backward} (and
interchanging the roles of $\mathcal{M}$ and $\mathcal{N}$), we
derive 
\[
u=w(t-\cdot)
\]
on $[0,t]$ and hence, in particular 
\[
u(t)=w(0)=w_{0}.
\]
Thus, \prettyref{eq:unstable} gives 
\[
|w(t)|_{H}\leq C^{-1}\e^{-\rho t}|w_{0}|_{H}
\]
which shows the claim. \\
(ii) $\Rightarrow$ (i): The claim can be shown by following the same
argumentation as above.
\end{proof}
\begin{lem}
\label{lem:decay_L_2}Let $S\subseteq\mathrm{IV}$ be a closed subspace.
Then the following conditions are equivalent

\begin{enumerate}[(i)]

\item $S$ is invariant under $\mathcal{M}$ and there exists $C>0,\rho>0$
such that 
\[
|\left(S(\mathcal{M})u_{0}\right)(t)|_{H}\leq C\e^{-\rho t}|u_{0}|_{H}
\]
for all $u_{0}\in S,t\geq0$. 

\item There exists $\rho'>0$ such that 
\[
S(\mathcal{M})u_{0}\in L_{2,-\rho'}(\mathbb{R};S)
\]
for all $u_{0}\in S.$ 

\end{enumerate}
\end{lem}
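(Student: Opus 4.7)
\textbf{Proof plan for \prettyref{lem:decay_L_2}.} The idea is to identify $S(\mathcal{M})u_{0}$ with the strong solution semigroup representation and then reduce (ii) to an application of Datko's theorem. Recall from \prettyref{cor:semigroupIVP} and \prettyref{cor:IVsemLapl} that for $u_{0}\in\mathrm{IV}$ we may write
\[
\left(S(\mathcal{M})u_{0}\right)(t)=\chi_{\mathbb{R}_{\geq0}}(t)T(t)u_{0},\qquad T(t)\coloneqq\exp\left(-tA\right),
\]
with $A\coloneqq\left(\iota_{R(M_{0})}^{\ast}M_{0}\iota_{\mathrm{IV}}\right)^{-1}\iota_{R(M_{0})}^{\ast}M_{1}\iota_{\mathrm{IV}}\in L(\mathrm{IV})$. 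In particular, $(T(t))_{t\geq0}$ is a uniformly continuous semigroup on $\mathrm{IV}$, so $t\mapsto T(t)u_{0}$ is continuous for every $u_{0}$.

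The direction (i)$\Rightarrow$(ii) is a direct computation. Invariance of $S$ together with $\spt(S(\mathcal{M})u_{0})\subseteq\mathbb{R}_{\geq0}$ and $|T(t)u_{0}|_{H}\leq Ce^{-\rho t}|u_{0}|_{H}$ yields, for any $0<\rho'<\rho$,
\[
\int_{\mathbb{R}}|(S(\mathcal{M})u_{0})(t)|_{H}^{2}e^{2\rho' t}\,\mathrm{d}t\leq C^{2}|u_{0}|_{H}^{2}\int_{0}^{\infty}e^{-2(\rho-\rho')t}\,\mathrm{d}t<\infty,
\]
and the values of $S(\mathcal{M})u_{0}$ lie in $S$ by assumption, so $S(\mathcal{M})u_{0}\in L_{2,-\rho'}(\mathbb{R};S)$.

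The converse (ii)$\Rightarrow$(i) has two parts. First, I would establish invariance of $S$ under $\mathcal{M}$. Since $u_{0}\in S\subseteq\mathrm{IV}$, the continuous representative $t\mapsto T(t)u_{0}$ exists and coincides with $(S(\mathcal{M})u_{0})(t)$ for $t\geq0$. The hypothesis $S(\mathcal{M})u_{0}\in L_{2,-\rho'}(\mathbb{R};S)$ gives $T(t)u_{0}\in S$ for almost every $t\geq0$; by continuity of $t\mapsto T(t)u_{0}$ and closedness of $S$ this upgrades to \emph{all} $t\geq0$, which is precisely invariance. Second, I would extract the pointwise exponential decay. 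The restriction $T(\cdot)|_{S}$ is a uniformly continuous $C_{0}$-semigroup on the Banach space $S$, and the rescaling $\tilde{T}(t)\coloneqq e^{\rho' t}T(t)|_{S}$ is again a uniformly continuous $C_{0}$-semigroup on $S$. The assumption in (ii) translates to
\[
\int_{0}^{\infty}\|\tilde{T}(t)u_{0}\|_{H}^{2}\,\mathrm{d}t<\infty\qquad(u_{0}\in S),
\]
so Datko's theorem yields constants $M,\alpha>0$ with $\|\tilde{T}(t)\|_{L(S)}\leq Me^{-\alpha t}$, whence $\|T(t)|_{S}\|_{L(S)}\leq Me^{-(\rho'+\alpha)t}$ for all $t\geq0$. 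This is the desired uniform pointwise decay with rate $\rho\coloneqq\rho'+\alpha>0$.

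The only non-routine ingredient is Datko's theorem; the rest is bookkeeping linking the $L_{2,\rho}$-picture of \prettyref{sec:secondIV} to the semigroup picture of \prettyref{sec:firstIV}. Since both solutions coincide on $\mathrm{IV}$ and $A$ is bounded, no unbounded-generator subtleties arise.
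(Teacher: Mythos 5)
Your argument is correct, but the route for (ii)$\Rightarrow$(i) differs from the paper's. You first recover invariance exactly as the paper does (a.e.\ values in $S$ plus continuity of the semigroup orbit and closedness of $S$), but then you obtain the uniform pointwise decay by rescaling the restricted semigroup, $\tilde{T}(t)=\e^{\rho' t}\exp(-tA)|_{S}$, and invoking Datko's theorem, which even yields a decay rate strictly better than $\rho'$. The paper instead argues by hand: writing $u(t)=\int_{0}^{t}(u'(s)-\varphi'(s)u_{0})\,\mathrm{d}s$ with a cut-off $\varphi$, using $u'=-Au$ with $A$ bounded and the Cauchy\textendash Schwarz inequality against the weight $\e^{2\rho's}$, it bounds $|u(t)|$ by $C\e^{-\rho' t}\left(|u|_{L_{2,-\rho'}}+|u_{0}|_{H}\right)$, and then gets uniformity in $u_{0}$ from the closed graph theorem applied to $S(\mathcal{M})\colon S\to L_{2,-\rho'}(\mathbb{R};S)$, ending up with the rate $\rho=\rho'$. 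The two approaches are close in spirit (Datko's proof itself rests on a uniform boundedness/closed graph argument), so your version mainly trades the paper's elementary, self-contained computation for a citation of a standard theorem; what it buys is brevity and the sharper rate $\rho'+\alpha$, while the paper's version stays within the tools already developed in the text, in keeping with its stated aim of being self-contained. Do make sure, if you keep Datko, to state it for $C_{0}$-semigroups on the Banach space $S$ and to note explicitly that the restriction $\exp(-tA)|_{S}$ is well defined only after the invariance step.
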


\begin{proof}
(i) $\Rightarrow$ (ii): This is clear for each $0<\rho'<\rho$.\\
(ii) $\Rightarrow$ (i): Let $u_{0}\in S$. As $u\coloneqq S(\mathcal{M})u_{0}$
is continuous, we infer $\left(S(\mathcal{M})u_{0}\right)(t)\in S$
for every $t\geq0.$ Let now 
\[
\varphi(t)\coloneqq\begin{cases}
1-t & \text{ if }0\leq t\leq1,\\
0 & \text{ else}.
\end{cases}
\]
Then we compute for each $t\geq1$ using the representation of $u(t)$
from \prettyref{cor:IVsemLapl} 
\begin{align*}
|u(t)| & =\left|\intop_{0}^{t}(u'(s)-\varphi'(s)u_{0})\text{ d}s\right|\\
 & \leq\left(\intop_{0}^{t}|u'(s)-\varphi'(s)u_{0}|^{2}\e^{2\rho's}\text{ d}s\right)^{\frac{1}{2}}\frac{1}{\sqrt{2\rho'}}\e^{-\rho't}\\
 & \leq\frac{1}{\sqrt{2\rho'}}\e^{-\rho't}\left(\left|\left(\iota_{R(M_{0})}^{\ast}M_{0}\iota_{\mathrm{IV}}\right)^{-1}\left(\iota_{R(M_{0})}^{\ast}M_{1}\iota_{\mathrm{IV}}\right)u\right|_{L_{2,-\rho'}}+|u_{0}|_{H}\e^{2\rho'}\right)\\
 & \leq C\e^{-\rho't}\left(|u|_{L_{2,-\rho'}}+|u_{0}|_{H}\right).
\end{align*}
Since $S(\mathcal{M}):S\to L_{2,-\rho'}(\mathbb{R};S)$ is closed,
we infer its boundedness by the closed graph theorem. Hence, for some
$C'\geq0$ 
\[
|u(t)|\leq C'\e^{-\rho't}|u_{0}|_{H}\quad(t\geq1).
\]
By \prettyref{cor:IVsemLapl} again, we deduce for some $C''\geq0$
\[
|u(t)|\leq C''\e^{-\rho't}|u_{0}|_{H}\quad(t\geq0).
\]
 Thus, the assertion follows with $\rho=\rho'$. 
\end{proof}
With the latter two lemmas at hand, we immediately obtain the following
proposition.
\begin{prop}
$\mathcal{M}$ admits a strong exponential dichotomy if and only if
there are two closed subspaces $S,T\subseteq\mathrm{IV}$ with $S\dotplus T=\mathrm{IV}$
and $\rho>0$ such that 
\[
\begin{cases}
S(\mathcal{M})u_{0}\in L_{2,-\rho}(\mathbb{R};S), & u_{0}\in S,\\
S(\mathcal{N})u_{0}\in L_{2,-\rho}(\mathbb{R};T) & u_{0}\in T,
\end{cases}
\]
where $\mathcal{N}$ denotes the pencil associated with $(M_{0},-M_{1}).$
\end{prop}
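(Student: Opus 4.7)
The plan is to obtain this equivalence as a direct synthesis of the two preceding lemmas, together with the elementary observation that invariance under $\mathcal{M}$ and invariance under $\mathcal{N}$ are the same condition on a closed subspace of $\textnormal{IV}$. The latter follows from the identity $\mathcal{N}(z) = -\mathcal{M}(-z)$: the inclusion $\mathcal{M}(z)^{-1}M_0[U]\subseteq U$ for all $z$ in the complement of a large disk is equivalent to $\mathcal{N}(z)^{-1}M_0[U]\subseteq U$ for all such $z$, via the substitution $z\mapsto -z$.

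For the forward implication, assume $\mathcal{M}$ admits a strong exponential dichotomy with decomposition $S\dotplus T=\textnormal{IV}$ and rate $\rho>0$. On $S$, the pointwise decay $|u(t)|_H\le Ce^{-\rho t}|u_0|_H$ together with the assumed $\mathcal{M}$-invariance of $S$ is exactly the hypothesis of \prettyref{lem:decay_L_2}(i), and the implication (i)$\Rightarrow$(ii) of that lemma delivers $S(\mathcal{M})u_0\in L_{2,-\rho'}(\mathbb{R};S)$ for some $\rho'>0$ and all $u_0\in S$. On $T$, the exponential growth estimate for $\mathcal{M}$ is first converted into an exponential decay estimate for $\mathcal{N}$ by applying the (i)$\Rightarrow$(ii) direction of the lemma that relates $\mathcal{M}$-growth and $\mathcal{N}$-decay on $T$; here the required $\mathcal{N}$-invariance of $T$ comes from the symmetry observation above. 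A second application of \prettyref{lem:decay_L_2}, now with the pencil $\mathcal{N}$ in place of $\mathcal{M}$, yields $S(\mathcal{N})u_0\in L_{2,-\rho'}(\mathbb{R};T)$ for all $u_0\in T$.

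For the reverse implication, the two $L_2$-hypotheses feed into the (ii)$\Rightarrow$(i) direction of \prettyref{lem:decay_L_2}. Applied to the datum on $S$ it produces simultaneously the $\mathcal{M}$-invariance of $S$ and the pointwise exponential decay of $S(\mathcal{M})u_0$ for $u_0\in S$. Applied to the datum on $T$ (with $\mathcal{N}$ playing the role of $\mathcal{M}$) it yields the $\mathcal{N}$-invariance of $T$ and a pointwise exponential decay of $S(\mathcal{N})w_0$ for $w_0\in T$. The $\mathcal{M}$-invariance of $T$ required by the definition of strong exponential dichotomy is then recovered from the symmetry observation, and the (ii)$\Rightarrow$(i) direction of the growth/decay lemma converts the $\mathcal{N}$-decay bound into the required exponential growth bound for $\mathcal{M}$ on $T$.

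There is no serious obstacle: the entire argument is a bookkeeping exercise stringing together the two lemmas with one symmetry observation. The only point that requires care is that when we invoke \prettyref{lem:decay_L_2} with $\mathcal{N}$ in place of $\mathcal{M}$, we must know the lemma applies verbatim to $\mathcal{N}$; but this is clear since $\mathcal{N}$ is regular with the same $s_0(\mathcal{N})$-type resolvent bound, and the representation formula in \prettyref{cor:IVsemLapl} used in the proof of \prettyref{lem:decay_L_2} holds for $\mathcal{N}$ by the exact same derivation, so no further work is needed.
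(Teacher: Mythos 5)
Your proposal is correct and is exactly the argument the paper intends: the paper offers no written proof beyond the remark that the statement follows ``immediately'' from Lemma \ref{lem:decay_L_2} and the preceding growth/decay lemma, and your bookkeeping -- applying \prettyref{lem:decay_L_2} to $S$ with $\mathcal{M}$ and to $T$ with $\mathcal{N}$, using the growth/decay lemma to pass between $\mathcal{M}$-growth and $\mathcal{N}$-decay on $T$, and transferring invariance via $\mathcal{N}(z)=-\mathcal{M}(-z)$ (the same observation made inside the paper's proof of that lemma) -- is precisely that intended combination. No substantive difference from the paper's route.
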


Since this characterisation of a strong exponential dichotomy does
not require any continuity conditions on the solutions, we can use
this notion to define exponential dichotomies for our second initial
value problem \prettyref{eq:mIVP}. For this, observe that by \prettyref{thm:IVP_2},
we obtain that the unique solution $u$ solving \prettyref{eq:mIVP}
for $u_{0}\in N(M_{0})$ is the zero function. Thus, for the description
of asymptotic behaviour, we can dispense with the space $N(M_{0})$.
\begin{defn}
We say that $\mathcal{M}$ admits a \emph{mild exponential dichotomy
}if there exist closed subspaces $S,T\subseteq N(M_{0})^{\bot}$ satisfying
the following properties: $S\dotplus T=N(M_{0})^{\bot}$ and there
exists $\rho>0$ such that
\[
\begin{cases}
\iota_{N(M_{0})^{\bot}}^{\ast}S(\mathcal{M})u_{0}\in L_{2,-\rho}(\mathbb{R};S), & u_{0}\in S,\\
\iota_{N(M_{0})^{\bot}}^{\ast}S(\mathcal{N})u_{0}\in L_{2,-\rho}(\mathbb{R};T), & u_{0}\in T.
\end{cases}
\]
We say that $\mathcal{M}$ is \emph{mildly exponentially stable,}
if it admits a mild exponential dichotomy with $S=N(M_{0})^{\bot}$
and $T=\{0\}$.
\end{defn}

The main goal is now to provide a characterisation for exponential
dichotomy in terms of the spectrum $\sigma_{\mathrm{IV}}(\mathcal{M})=\sigma(\mathcal{M})$.
As a prerequisite, we start to study the simple case $M_{0}=1;$ moreover,
we shall characterise mild and strong exponential stability. Note
that we recover the stability theorem in \cite[Theorem 3-1.1]{Dai1989}
for the finite-dimensional case.
\begin{lem}
\label{lem:GP}Let $M\in L(H)$, $\rho_{1}\in\mathbb{R}$, and let
$\tilde{\mathcal{M}}$ be the pencil associated with $(1,M)$. Then
the following conditions are equivalent:

\begin{enumerate}[(i)]

\item For all $v\in H$ we have that $S(\tilde{\mathcal{M}})v\in\bigcap_{\rho>\rho_{1}}L_{2,\rho}(\mathbb{R};H)$. 

\item$\sigma(-M)\subseteq\mathbb{C}_{\Re\leq\rho_{1}}$.

\end{enumerate}
\end{lem}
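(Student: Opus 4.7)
Since $M_{0}=1$ we have $N(M_{0})=\{0\}$, $R(M_{0})=H$ and $\mathrm{IV}=H$. Combining \prettyref{cor:semigroupIVP} with \prettyref{cor:IVsemLapl} identifies
\[
\bigl(S(\tilde{\mathcal{M}})v\bigr)(t)=\chi_{\mathbb{R}_{\geq 0}}(t)\,\e^{-tM}v\quad(v\in H,\;t\in\mathbb{R}),
\]
so the statement is purely about the uniformly continuous semigroup $(\e^{-tM})_{t\geq 0}$ generated by the bounded operator $-M$. Since $M$ is bounded, the spectral mapping theorem for the holomorphic functional calculus yields $\sigma(\e^{-tM})=\e^{-t\sigma(M)}$; hence the growth bound of this semigroup coincides with $s(-M)\coloneqq\sup\Re\sigma(-M)$, and (ii) is equivalent to the assertion that this growth bound is at most $\rho_{1}$.

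With that reduction, the direction (ii)$\Rightarrow$(i) is immediate: from $s(-M)\leq\rho_{1}$ one gets $\|\e^{-tM}\|\leq C_{\varepsilon}\e^{(\rho_{1}+\varepsilon)t}$ for every $\varepsilon>0$; given $\rho>\rho_{1}$, choosing $\varepsilon\in(0,\rho-\rho_{1})$ makes $\int_{0}^{\infty}|\e^{-tM}v|_{H}^{2}\e^{-2\rho t}\,\mathrm{d}t$ converge geometrically, so $S(\tilde{\mathcal{M}})v\in L_{2,\rho}(\mathbb{R};H)$ for every such $\rho$.

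For the converse (i)$\Rightarrow$(ii), fix $\rho>\rho_{1}$. By (i) the linear map $v\mapsto S(\tilde{\mathcal{M}})v$ is everywhere defined from $H$ into $L_{2,\rho}(\mathbb{R};H)$; pointwise continuity of $v\mapsto\e^{-tM}v$ combined with the fact that any $L_{2,\rho}$-convergent sequence has a pointwise a.e.\ convergent subsequence shows the map to be closed, so the closed graph theorem supplies a constant $K_{\rho}\geq 0$ with
\[
\int_{0}^{\infty}|\e^{-tM}v|_{H}^{2}\,\e^{-2\rho t}\,\mathrm{d}t\leq K_{\rho}|v|_{H}^{2}\quad(v\in H).
\]
This is Datko's $L^{2}$-integrability condition for the rescaled uniformly continuous semigroup $T(t)\coloneqq\e^{-t(M+\rho)}$; invoking Datko's theorem yields $\|T(t)\|\leq K\e^{-\omega t}$ for some $\omega>0$, equivalently $\|\e^{-tM}\|\leq K\e^{(\rho-\omega)t}$. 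Hence the growth bound of $(\e^{-tM})_{t\geq 0}$ is at most $\rho-\omega<\rho$, and since $\rho>\rho_{1}$ was arbitrary, it is bounded by $\rho_{1}$; together with the first paragraph this is (ii).

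The main obstacle is the quantitative step in (i)$\Rightarrow$(ii) which upgrades an $L_{2,\rho}$-bound on orbits to a pointwise exponential decay rate --- precisely Datko's theorem. A self-contained alternative tailored to the Hilbert-space setting would pass the closed-graph bound through Plancherel to obtain an $L^{2}$-bound on $t\mapsto(\i t+\rho+M)^{-1}v$ and then apply a Gearhart--Pr\"uss-type resolvent argument to conclude.
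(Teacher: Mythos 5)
Your proof is correct, but it runs along a genuinely different track than the paper's. You first identify $S(\tilde{\mathcal{M}})v=\chi_{\mathbb{R}_{\geq0}}\e^{-\cdot M}v$ via \prettyref{cor:IVsemLapl} (legitimate, since $M_{0}=1$ gives $\mathrm{IV}=H$) and then argue entirely inside semigroup theory: for (ii)$\Rightarrow$(i) you use that the growth bound of a uniformly continuous semigroup equals the spectral bound of its bounded generator (spectral mapping for the holomorphic calculus plus $\omega_{0}=\frac{1}{t}\log r(\e^{-tM})$), and for (i)$\Rightarrow$(ii) you upgrade the pointwise $L_{2,\rho}$-membership of orbits to a uniform bound by a closed-graph argument and then invoke Datko's theorem for the rescaled semigroup $\e^{-t(M+\rho)}$, letting $\rho\downarrow\rho_{1}$. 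The paper instead stays within its Fourier--Laplace framework and keeps the argument self-contained: (ii)$\Rightarrow$(i) is a contour shift (\prettyref{lem:independent_rho}) applied to $f(z)=\frac{1}{\sqrt{2\pi}}(z+M)^{-1}v$, using the Neumann-series bound $|zf(z)|\lesssim|v|_{H}$ for $|z|>2\|M\|$; (i)$\Rightarrow$(ii) is a contradiction argument in which the Laplace transform of $S(\tilde{\mathcal{M}})v$ is holomorphic on all of $\mathbb{C}_{\Re>\rho_{1}}$, agrees with $\frac{1}{\sqrt{2\pi}}(z+M)^{-1}v$ far to the right, hence everywhere on the relevant component by the identity theorem, while the uniform boundedness principle produces a vector $v$ along which the resolvent blows up near a spectral point in $\mathbb{C}_{\Re>\rho_{1}}$ --- incompatible with local boundedness of a holomorphic function. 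What your route buys is brevity, at the price of importing Datko's theorem and the growth-bound/spectral-bound identity as black boxes; what the paper's route buys is consistency with its stated aim of self-containment (it even avoids the full Paley--Wiener theorem) and the fact that the same contour-shift lemma is reused elsewhere. Note also that both directions of your argument, like the paper's, hinge on $\spt S(\tilde{\mathcal{M}})v\subseteq\mathbb{R}_{\geq0}$ and the resolvent representation from \prettyref{thm:IVP_2}; you obtain these through \prettyref{cor:IVsemLapl}, which is fine.
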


\begin{proof}
We show that (i) implies (ii), first. Since $M$ is a bounded operator,
its spectrum is compact. Let $\rho_{2}>\rho_{1}$ with $\sigma(-M)\subseteq\mathbb{C}_{\Re<\rho_{2}}.$
Assume by contradiction that $\sigma(-M)\cap\mathbb{C}_{\Re>\rho_{1}}\neq\emptyset$.
The mapping
\[
\varrho(-M)\cap\mathbb{C}_{\Re>\rho_{1}}\ni z\mapsto(z+M)^{-1}
\]
is holomorphic. By assumption we find a convergent sequence $(z_{n})_{n}$
in $\mathbb{C}_{\Re>\rho_{1}}$ such that, for every $n\in\mathbb{N}$,
$z_{n}$ lies in the component of $\mathbb{C}_{\Re>\rho_{2}}$ in
$\varrho(-M)$ and 
\[
\|\left(z_{n}+M\right)^{-1}\|\to\infty\quad(n\to\infty).
\]
 By the uniform boundedness principle, there exists $v\in H$ such
that
\[
\left|\left(z_{n}+M\right)^{-1}v\right|_{H}\to\infty\quad(n\to\infty)
\]
Since $S(\tilde{\mathcal{M}})v\in\bigcap_{\rho>\rho_{1}}L_{2,\rho}(\mathbb{R};H)$
and $\spt S(\tilde{\mathcal{M}})v\subseteq\mathbb{R}_{\geq0}$ by
\prettyref{thm:IVP_2}, we deduce that the mapping 
\[
f\colon\mathbb{C}_{\Re>\rho_{1}}\ni z\mapsto\left(\mathcal{L}_{\Re z}S(\tilde{\mathcal{M}})v\right)(\Im z)=\frac{1}{\sqrt{2\pi}}\intop_{0}^{\infty}\e^{-zt}\left(S(\tilde{\mathcal{M}})v\right)(t)\text{ d}t
\]
is holomorphic. Moreover, by (i), we get
\[
f(z)=\frac{1}{\sqrt{2\pi}}(z+M)^{-1}v
\]
for all $z\in\mathbb{C}_{\Re>\rho_{2}}$. By the identity theorem,
we infer
\[
f(z_{n})=\frac{1}{\sqrt{2\pi}}(z_{n}+M)^{-1}v
\]
for all $n\in\mathbb{N}$. Since, $f$ is bounded on compact subsets
of $\mathbb{C}_{\Re>\rho_{1}}$, we obtain
\[
\infty>\sup_{n}\left|f(z_{n})\right|_{H}=\sup_{n}\left|\frac{1}{\sqrt{2\pi}}(z_{n}+M)^{-1}v\right|_{H}=\infty,
\]
a contradiction.

Next, we prove that (ii) implies (i). Let $v\in H$ and set $u\coloneqq S(\tilde{\mathcal{M}})v$.
By \prettyref{thm:IVP_2} we have 
\[
\left(\mathcal{L}_{\rho}u\right)(t)=\frac{1}{\sqrt{2\pi}}((\i t+\rho)+M)^{-1}v\quad(t\in\mathbb{R})
\]
for each $\rho>\max\{s_{0}(\tilde{\mathcal{M}}),0\}.$ By assumption,
the mapping 
\begin{align*}
f:\mathbb{C}_{\Re>\rho_{1}} & \to H\\
z & \mapsto\frac{1}{\sqrt{2\pi}}(z+M)^{-1}v
\end{align*}
is analytic. For $\rho'\in]\rho_{1},\rho[,$ we in particular deduce
that $f:\{z\in\mathbb{C}\,;\,\rho'\leq\Re z\leq\rho\}\to H$ is continuous
and analytic in the interior. Since for $|z|>2\|M\|$ we have 
\[
|zf(z)|\leq\frac{1}{\sqrt{2\pi}}|v|_{H}\|(1+z^{-1}M)^{-1}\|\leq\sqrt{\frac{2}{\pi}}|v|_{H}
\]
by the Neumann series, we infer 
\[
\sup_{\rho'\leq\Re z\leq\rho}|zf(z)|<\infty
\]
and thus, 
\[
u=\mathcal{L}_{\rho'}^{\ast}\left(f(\i\cdot+\rho')\right)\in L_{2,\rho'}(\mathbb{R};H)
\]
according to \prettyref{lem:independent_rho}.
\end{proof}
Due to \prettyref{cor:semigroupIVP} the next result is a direct consequence
of \prettyref{lem:GP}:
\begin{thm}
\label{thm:strongDicho}$\mathcal{M}$ admits a strong exponential
dichotomy, if, and only if, 
\[
\sigma_{\textnormal{IV}}(\mathcal{M})=\sigma\left(-\left(\iota_{R(M_{0})}^{\ast}M_{0}\iota_{\mathrm{IV}}\right)^{-1}\left(\iota_{R(M_{0})}^{\ast}M_{1}\iota_{\mathrm{IV}}\right)\right)\cap i\mathbb{R}=\emptyset.
\]
The pencil $\mathcal{M}$ is exponentially stable, if, and only if,
\[
\sigma_{\mathrm{IV}}(\mathcal{M})\subseteq\mathbb{C}_{\Re<0}.
\]
\end{thm}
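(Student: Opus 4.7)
The plan is to reduce the theorem to \prettyref{lem:GP} by identifying the evolution on $\mathrm{IV}$ with a standard semigroup. Set
\[
C \coloneqq \left(\iota_{R(M_{0})}^{\ast}M_{0}\iota_{\mathrm{IV}}\right)^{-1}\left(\iota_{R(M_{0})}^{\ast}M_{1}\iota_{\mathrm{IV}}\right) \in L(\mathrm{IV}),
\]
so that $\sigma_{\mathrm{IV}}(\mathcal{M}) = \sigma(-C)$ is compact. Combining \prettyref{cor:semigroupIVP} with \prettyref{prop:LaplaceRep}, for every $u_{0}\in\mathrm{IV}$ one has $(S(\mathcal{M})u_{0})(t) = \chi_{\mathbb{R}_{\geq0}}(t)\,\e^{-tC}u_{0}$, which coincides with $\iota_{\mathrm{IV}} S(\tilde{\mathcal{M}})u_{0}$, where $\tilde{\mathcal{M}}$ denotes the pencil $(1,C)$ on the Hilbert space $\mathrm{IV}$. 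Consequently, the $L_{2,\mu}(\mathbb{R};\mathrm{IV})$-membership of $S(\mathcal{M})u_{0}$ and $S(\tilde{\mathcal{M}})u_{0}$ are equivalent for any $\mu\in\mathbb{R}$.

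For the stability claim I chain \prettyref{lem:GP} and \prettyref{lem:decay_L_2}. By compactness of $\sigma(-C)$, the inclusion $\sigma_{\mathrm{IV}}(\mathcal{M})\subseteq\mathbb{C}_{\Re<0}$ is equivalent to $\sigma(-C)\subseteq\mathbb{C}_{\Re\leq\rho_{1}}$ for some $\rho_{1}<0$; \prettyref{lem:GP} applied to $M=C$ on $\mathrm{IV}$ then rephrases this as $S(\tilde{\mathcal{M}})v\in L_{2,\rho_{1}/2}(\mathbb{R};\mathrm{IV})$ for all $v\in\mathrm{IV}$, which via the above identification reads as $S(\mathcal{M})u_{0}\in L_{2,-\rho'}(\mathbb{R};\mathrm{IV})$ for some $\rho'>0$ and all $u_{0}\in\mathrm{IV}$. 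By \prettyref{lem:decay_L_2} this is precisely strong exponential stability (with $S=\mathrm{IV}$).

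For the dichotomy equivalence, the forward direction uses the Dunford--Riesz functional calculus. Assuming $\sigma(-C)\cap\i\mathbb{R}=\emptyset$, compactness yields a disjoint splitting $\sigma(-C)=\sigma_{-}\sqcup\sigma_{+}$ with $\sigma_{\pm}\subseteq\mathbb{C}_{\Re\gtrless 0}$, and hence commuting spectral projections $P_{\pm}\in L(\mathrm{IV})$ with $P_{-}+P_{+}=1_{\mathrm{IV}}$. The closed, $C$-invariant (hence $\mathcal{M}$-invariant) subspaces $S\coloneqq R(P_{-})$ and $T\coloneqq R(P_{+})$ satisfy $\mathrm{IV}=S\dotplus T$, and $\sigma(-C|_{S})=\sigma_{-}$, $\sigma(-C|_{T})=\sigma_{+}$. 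Applying the stability equivalence from the previous paragraph to the pencil restricted to $S$ gives $S(\mathcal{M})u_{0}\in L_{2,-\rho}(\mathbb{R};S)$ for $u_{0}\in S$. For the unstable part, note that the operator associated with $\mathcal{N}=(M_{0},-M_{1})$ on $\mathrm{IV}$ is precisely $-C$, so $\sigma_{\mathrm{IV}}(\mathcal{N})=\sigma(C)$ and thus $\sigma(C|_{T})=-\sigma_{+}\subseteq\mathbb{C}_{\Re<0}$; the same stability argument applied to $\mathcal{N}|_{T}$ yields $S(\mathcal{N})u_{0}\in L_{2,-\rho}(\mathbb{R};T)$ for $u_{0}\in T$. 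By the proposition preceding the definition of mild exponential dichotomy, this is exactly a strong exponential dichotomy. The converse runs in reverse: given such a dichotomy, the subspaces $S,T$ are closed and $C$-invariant with $\mathrm{IV}=S\dotplus T$, so $\sigma(-C)=\sigma(-C|_{S})\cup\sigma(-C|_{T})$; running the stability equivalence on $\mathcal{M}|_{S}$ and $\mathcal{N}|_{T}$ forces each spectrum strictly into one open half-plane, so $\sigma_{\mathrm{IV}}(\mathcal{M})$ avoids $\i\mathbb{R}$.

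The only non-routine step is the Riesz-projection splitting and the care required to identify the restricted problems with bona fide pencils to which \prettyref{lem:GP} applies; once that bookkeeping is done, the theorem is essentially a corollary of \prettyref{cor:semigroupIVP} and \prettyref{lem:GP}.
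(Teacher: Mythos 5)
Your proposal is correct and follows essentially the same route as the paper: representing the strong solution on $\mathrm{IV}$ as $\e^{-tC}u_{0}$ with $C=\left(\iota_{R(M_{0})}^{\ast}M_{0}\iota_{\mathrm{IV}}\right)^{-1}\left(\iota_{R(M_{0})}^{\ast}M_{1}\iota_{\mathrm{IV}}\right)$, translating decay/growth into weighted-$L_{2}$ membership via Lemma \ref{lem:decay_L_2} (and the characterising proposition involving $\mathcal{N}$), invoking Lemma \ref{lem:GP} on the invariant subspaces, and using the Dunford--Riesz projection for the converse implication. The only difference is that you spell out the restriction-to-subspace bookkeeping and the role of $\mathcal{N}$ and $-C$ more explicitly than the paper's terse argument, which is harmless.
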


\begin{proof}
If $\mathcal{M}$ admits a strong exponential dichotomy, then $\left(\iota_{R(M_{0})}^{\ast}M_{0}\iota_{\mathrm{IV}}\right)^{-1}\left(\iota_{R(M_{0})}^{\ast}M_{1}\iota_{\mathrm{IV}}\right)$
leaves the spaces $S$ and $T$ invariant. Hence,
\begin{align*}
 & \sigma\left(-\left(\iota_{R(M_{0})}^{\ast}M_{0}\iota_{\mathrm{IV}}\right)^{-1}\left(\iota_{R(M_{0})}^{\ast}M_{1}\iota_{\mathrm{IV}}\right)\right)\\
 & =\sigma\left(-\iota_{S}^{\ast}\left(\iota_{R(M_{0})}^{\ast}M_{0}\iota_{\mathrm{IV}}\right)^{-1}\left(\iota_{R(M_{0})}^{\ast}M_{1}\iota_{\mathrm{IV}}\right)\iota_{S}\right)\\ & \quad\quad\cup\sigma\left(-\iota_{T}^{\ast}\left(\iota_{R(M_{0})}^{\ast}M_{0}\iota_{\mathrm{IV}}\right)^{-1}\left(\iota_{R(M_{0})}^{\ast}M_{1}\iota_{\mathrm{IV}}\right)\iota_{T}\right)\\
 & \subseteq\mathbb{C}_{\Re<0}\cup\mathbb{C}_{\Re>0}
\end{align*}
by \prettyref{lem:decay_L_2} and \prettyref{lem:GP}. On the other
hand, if $\sigma_{\mathrm{IV}}(\mathcal{M})\cap\i\mathbb{R}=\emptyset$,
then we can choose $S\coloneqq P[\mathrm{IV}]$ and $T\coloneqq(1-P)\left[\mathrm{IV}\right],$
where $P$ denotes the Dunford projection on $\sigma_{\mathrm{IV}}(\mathcal{M})\cap\mathbb{C}_{\Re<0}$.
Indeed, we then obtain that $S$ and $T$ are left invariant and that
\begin{align*}
\sigma\left(-\iota_{S}^{\ast}\left(\iota_{R(M_{0})}^{\ast}M_{0}\iota_{\mathrm{IV}}\right)^{-1}\left(\iota_{R(M_{0})}^{\ast}M_{1}\iota_{\mathrm{IV}}\right)\iota_{S}\right) & \subseteq\mathbb{C}_{\Re<0}\\
\sigma\left(-\iota_{T}^{\ast}\left(\iota_{R(M_{0})}^{\ast}M_{0}\iota_{\mathrm{IV}}\right)^{-1}\left(\iota_{R(M_{0})}^{\ast}M_{1}\iota_{\mathrm{IV}}\right)\iota_{T}\right) & \subseteq\mathbb{C}_{\Re>0},
\end{align*}
which again yields the assertion by \prettyref{lem:decay_L_2} and
\prettyref{lem:GP}.
\end{proof}
Next, we shall address mild exponential dichotomy and stability. For
this, we discuss a preliminary observation first. 
\begin{lem}
\label{lem:refregPen}Adopt the notation from \prettyref{prop:regPen}.
Then for $z\in\mathbb{C}_{\Re>\rho}$, where $\rho>\max\{s_{0}(\mathcal{M}),0\},$
we have
\[
\iota_{N(M_{0})^{\bot}}^{\ast}\mathcal{M}(z)^{-1}M_{0}=\left(z-A\right)^{-1}\iota_{N(M_{0})^{\bot}}^{*},
\]
where $A\coloneqq-\tilde{M}_{0}^{-1}\tilde{M}_{1}$ with $\tilde{M}_{0},\tilde{M}_{1}$
iven as in \prettyref{prop:regPen}. In particular, \[\iota_{N(M_{0})^{\bot}}^{\ast}S(\mathcal{M})u_{0}=S(\tilde{\mathcal{M}})\iota_{N(M_{0})^{\bot}}^{\ast}u_{0}\]
for each $u_{0}\in H$, where $\tilde{\mathcal{M}}$ denotes the pencil
associated with $(1,-A).$ 
\end{lem}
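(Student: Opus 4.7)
The plan is to leverage the block factorisation of $\mathcal{M}(z)$ established in \prettyref{prop:regPen}. The first step is to express $M_0$ itself in the same block form. Since $M_0 \iota_{N(M_0)} = 0$ by definition of the kernel, and $\iota_{R(M_0)^\bot}^\ast M_0 = 0$ because $R(M_0)$ is closed (so $\iota_{R(M_0)^\bot}^\ast$ is the orthogonal projection onto $R(M_0)^\bot$), I would obtain
\[
U_1 M_0 U_0^\ast = \begin{pmatrix} \tilde{M}_0 & 0 \\ 0 & 0 \end{pmatrix}.
\]
Inverting the factorisation from \prettyref{prop:regPen}, this gives
\[
U_0 \mathcal{M}(z)^{-1} M_0 U_0^\ast = V_0^{-1} \begin{pmatrix} (z\tilde{M}_0 + \tilde{M}_1)^{-1} & 0 \\ 0 & (\iota_{R(M_0)^\bot}^\ast M_1 \iota_{N(M_0)})^{-1} \end{pmatrix} V_1^{-1} \begin{pmatrix} \tilde{M}_0 & 0 \\ 0 & 0 \end{pmatrix}.
\]

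Because $V_1^{-1}$ is upper unit triangular, it acts as the identity on a matrix whose bottom row vanishes; thus the $V_1^{-1}$-factor disappears, and the product collapses to the matrix whose sole non-zero entry is $(z\tilde{M}_0 + \tilde{M}_1)^{-1} \tilde{M}_0$ in position $(1,1)$. Applying the lower unit triangular $V_0^{-1}$ produces a second-row contribution, but this is annihilated once I extract the first component via $\iota_{N(M_0)^\bot}^\ast = (1\ 0)\,U_0$. Hence
\[
\iota_{N(M_0)^\bot}^\ast \mathcal{M}(z)^{-1} M_0 = (z\tilde{M}_0 + \tilde{M}_1)^{-1} \tilde{M}_0\, \iota_{N(M_0)^\bot}^\ast.
\]
Since $\tilde{M}_0$ is a Banach space isomorphism (as recorded in the proof of \prettyref{prop:regPen}), factoring $z\tilde{M}_0 + \tilde{M}_1 = \tilde{M}_0(z - A)$ with $A = -\tilde{M}_0^{-1} \tilde{M}_1$ yields $(z\tilde{M}_0 + \tilde{M}_1)^{-1} \tilde{M}_0 = (z - A)^{-1}$, which gives the first identity.

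For the second claim, I would plug this into the Fourier\textendash Laplace representation of $S(\mathcal{M})$ from \prettyref{thm:IVP_2}. Since $\iota_{N(M_0)^\bot}^\ast$ is bounded and $\mathcal{L}_\rho^\ast$ acts pointwise on vector-valued functions, the two commute in the natural sense, and I obtain
\[
\iota_{N(M_0)^\bot}^\ast S(\mathcal{M}) u_0 = \mathcal{L}_\rho^\ast \Bigl( t \mapsto \tfrac{1}{\sqrt{2\pi}} (\i t + \rho - A)^{-1} \iota_{N(M_0)^\bot}^\ast u_0 \Bigr),
\]
which equals $S(\tilde{\mathcal{M}}) \iota_{N(M_0)^\bot}^\ast u_0$ because the pencil $\tilde{\mathcal{M}}(z) = z - A$ has inverse $(z-A)^{-1}$. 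The only potential obstacle is the bookkeeping of the block-matrix multiplication; once one verifies that the triangular factors $V_0$ and $V_1$ do not contribute, everything else is a purely algebraic manipulation together with the basic functorial property of the Fourier\textendash Laplace transform.
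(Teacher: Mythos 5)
Your proposal is correct and follows essentially the same route as the paper: both exploit the factorisation $U_{1}\mathcal{M}(z)U_{0}^{\ast}=V_{1}\operatorname{diag}\bigl(z\tilde{M}_{0}+\tilde{M}_{1},\,\iota_{R(M_{0})^{\bot}}^{\ast}M_{1}\iota_{N(M_{0})}\bigr)V_{0}$ from Proposition \ref{prop:regPen}, write $M_{0}$ in block form so that the unit-triangular factors $V_{1}^{-1}$ and $V_{0}^{-1}$ drop out after extracting the first component, and conclude via $(z\tilde{M}_{0}+\tilde{M}_{1})^{-1}\tilde{M}_{0}=(z-A)^{-1}$; the passage to $S(\mathcal{M})$ by commuting the bounded operator $\iota_{N(M_{0})^{\bot}}^{\ast}$ with $\mathcal{L}_{\rho}^{\ast}$ is exactly how the "in particular" part is meant to be read. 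The only difference is cosmetic bookkeeping (you conjugate by $U_{0}$, the paper computes $V_{0}U_{0}\mathcal{M}(z)^{-1}M_{0}$ directly).
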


\begin{proof}
By \prettyref{prop:regPen} we compute,
\begin{align*}
 & V_{0}U_{0}\mathcal{M}(z)^{-1}M_{0}\\
 & =V_{0}U_{0}\mathcal{M}(z)^{-1}\iota_{R(M_{0})}\iota_{R(M_{0})}^{\ast}M_{0}\iota_{N(M_{0})^{\bot}}\iota_{N(M_{0})^{\bot}}^{*}\\
 & =\left(\begin{array}{cc}
\left(z\tilde{M}_{0}+\tilde{M}_{1}\right)^{-1} & 0\\
0 & \left(\iota_{R(M_{0})^{\bot}}^{\ast}M_{1}\iota_{N(M_{0})}\right)^{-1}
\end{array}\right)V_{1}^{-1}U_{1}\iota_{R(M_{0})}\iota_{R(M_{0})}^{\ast}M_{0}\iota_{N(M_{0})^{\bot}}\iota_{N(M_{0})^{\bot}}^{*}\\
 & =\left(\begin{array}{cc}
\left(z\tilde{M}_{0}+\tilde{M}_{1}\right)^{-1} & 0\\
0 & \left(\iota_{R(M_{0})^{\bot}}^{\ast}M_{1}\iota_{N(M_{0})}\right)^{-1}
\end{array}\right)V_{1}^{-1}\left(\begin{array}{c}
\iota_{R(M_{0})}^{\ast}\\
\iota_{R(M_{0})^{\bot}}^{\ast}
\end{array}\right)\iota_{R(M_{0})}\tilde{M}_{0}\iota_{N(M_{0})^{\bot}}^{*}\\
 & =\left(\begin{array}{cc}
\left(z\tilde{M}_{0}+\tilde{M}_{1}\right)^{-1} & 0\\
0 & \left(\iota_{R(M_{0})^{\bot}}^{\ast}M_{1}\iota_{N(M_{0})}\right)^{-1}
\end{array}\right)\\
 & \quad\times\left(\begin{array}{cc}
1 & -\iota_{R(M_{0})}^{*}M_{1}\iota_{N(M_{0})}\left(\iota_{R(M_{0})^{\bot}}^{\ast}M_{1}\iota_{N(M_{0})}\right)^{-1}\\
0 & 1
\end{array}\right)\left(\begin{array}{c}
\tilde{M}_{0}\iota_{N(M_{0})^{\bot}}^{*}\\
0
\end{array}\right)\\
 & =\left(\begin{array}{cc}
\left(z\tilde{M}_{0}+\tilde{M}_{1}\right)^{-1} & 0\\
0 & \left(\iota_{R(M_{0})^{\bot}}^{\ast}M_{1}\iota_{N(M_{0})}\right)^{-1}
\end{array}\right)\left(\begin{array}{c}
\tilde{M}_{0}\iota_{N(M_{0})^{\bot}}^{*}\\
0
\end{array}\right)\\
 & =\left(\begin{array}{c}
\left(z\tilde{M}_{0}+\tilde{M}_{1}\right)^{-1}\tilde{M}_{0}\iota_{N(M_{0})^{\bot}}^{*}\\
0
\end{array}\right)\\
 & =\left(\begin{array}{c}
\left(z+\tilde{M}_{0}^{-1}\tilde{M}_{1}\right)^{-1}\iota_{N(M_{0})^{\bot}}^{*}\\
0
\end{array}\right)=\left(\begin{array}{c}
\left(z-A\right)^{-1}\iota_{N(M_{0})^{\bot}}^{*}\\
0
\end{array}\right).
\end{align*}
Hence, 
\begin{align*}
\iota_{N(M_{0})^{\bot}}^{\ast}\mathcal{M}(z)^{-1}M_{0} & =\iota_{N(M_{0})^{\bot}}^{\ast}U_{0}^{\ast}V_{0}^{-1}\left(\begin{array}{c}
\left(z-A\right)^{-1}\iota_{N(M_{0})^{\bot}}^{*}\\
0
\end{array}\right)\\
 & =\iota_{N(M_{0})^{\bot}}^{\ast}\left(\begin{array}{cc}
\iota_{N(M_{0})^{\bot}} & \iota_{N(M_{0})}\end{array}\right)\left(\begin{array}{c}
\left(z-A\right)^{-1}\iota_{N(M_{0})^{\bot}}^{*}\\
0
\end{array}\right)\\
 & =\left(z-A\right)^{-1}\iota_{N(M_{0})^{\bot}}^{*}.\tag*{\qedhere}
\end{align*}
\end{proof}
\begin{thm}
\label{thm:mild_exp_dich}$\mathcal{M}$ admits a mild exponential
dichotomy, if, and only if, 
\[
\sigma\left(\mathcal{M}\right)\cap i\mathbb{R}=\emptyset.
\]
$\mathcal{M}$ is exponentially stable, if, and only if, 
\[
\sigma\left(\mathcal{M}\right)\subseteq\mathbb{C}_{\Re<0}.
\]
\end{thm}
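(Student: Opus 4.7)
The plan is to reduce the mild dichotomy/stability question to the scalar-type case handled in \prettyref{lem:GP}, via the identity supplied by \prettyref{lem:refregPen}, and then apply a Dunford-projection argument analogous to \prettyref{thm:strongDicho}. The key observation is that \prettyref{lem:refregPen} gives
\[
\iota_{N(M_0)^{\bot}}^\ast S(\mathcal{M}) = S(\tilde{\mathcal{M}})\iota_{N(M_0)^{\bot}}^\ast,
\]
where $\tilde{\mathcal{M}}$ is the pencil associated with $(1,-A)$ and $A\coloneqq -\tilde{M}_0^{-1}\tilde{M}_1$ acts on $N(M_0)^{\bot}$. Since replacing $M_1$ by $-M_1$ replaces $\tilde{M}_1$ by $-\tilde{M}_1$, the same lemma applied to $\mathcal{N}$ yields $\iota_{N(M_0)^{\bot}}^\ast S(\mathcal{N}) = S(\tilde{\mathcal{N}})\iota_{N(M_0)^{\bot}}^\ast$ with $\tilde{\mathcal{N}}$ the pencil associated with $(1,A)$. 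Moreover, $\sigma(\mathcal{M})=\sigma(A)$ by \prettyref{prop:regPen}.

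For the implication from mild dichotomy to the spectral condition, suppose closed subspaces $S,T\subseteq N(M_0)^{\bot}$ with $S\dotplus T = N(M_0)^{\bot}$ furnish a mild exponential dichotomy. For $u_0\in S$ the condition becomes $S(\tilde{\mathcal{M}}) u_0 \in L_{2,-\rho}(\mathbb{R};S)$. Since $S(\tilde{\mathcal{M}})u_0(t)=\chi_{\mathbb{R}_{\geq 0}}(t)\e^{tA}u_0$ is continuous and $S$ is closed, $\e^{tA}u_0\in S$ for all $t\geq 0$, and differentiating at $t=0$ (which is legal as $A$ is bounded) gives $A[S]\subseteq S$. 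Then \prettyref{lem:decay_L_2} together with \prettyref{lem:GP} applied to $(1,-A|_S)$ yields $\sigma(A|_S)\subseteq\mathbb{C}_{\Re\leq -\rho}$. The mirror argument applied to $T$ via $\mathcal{N}$ gives $A[T]\subseteq T$ and $\sigma(-A|_T)\subseteq\mathbb{C}_{\Re\leq -\rho}$, hence $\sigma(A|_T)\subseteq\mathbb{C}_{\Re\geq\rho}$. Since $A$ is block-diagonal with respect to the topological direct sum $N(M_0)^{\bot}=S\dotplus T$,
\[
\sigma(\mathcal{M})=\sigma(A)=\sigma(A|_S)\cup\sigma(A|_T)\subseteq\mathbb{C}_{\Re<0}\cup\mathbb{C}_{\Re>0},
\]
which is the asserted spectral separation. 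The stability case specialises to $T=\{0\}$, $S=N(M_0)^{\bot}$.

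For the converse, assume $\sigma(A)\cap i\mathbb{R}=\emptyset$, let $P$ be the Dunford projection onto $\sigma(A)\cap\mathbb{C}_{\Re<0}$, and put $S\coloneqq P[N(M_0)^{\bot}]$, $T\coloneqq (1-P)[N(M_0)^{\bot}]$. Then $N(M_0)^{\bot}=S\dotplus T$, both subspaces are $A$-invariant, $\sigma(A|_S)\subseteq\mathbb{C}_{\Re<0}$, and $\sigma(A|_T)\subseteq\mathbb{C}_{\Re>0}$. Applying the ``easy'' direction of \prettyref{lem:GP} to $(1,-A|_S)$ and to $(1,A|_T)$ respectively, the reduced solution operators satisfy $S(\tilde{\mathcal{M}})u_0\in L_{2,-\rho}(\mathbb{R};S)$ for $u_0\in S$ and $S(\tilde{\mathcal{N}})u_0\in L_{2,-\rho}(\mathbb{R};T)$ for $u_0\in T$; lifting these back through the identities of the first paragraph gives exactly the mild exponential dichotomy. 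Mild exponential stability is the special case $S=N(M_0)^{\bot}$, $T=\{0\}$, corresponding to $\sigma(\mathcal{M})=\sigma(A)\subseteq\mathbb{C}_{\Re<0}$.

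The main obstacle I anticipate is managing the dictionary between the ``large'' solution operators $S(\mathcal{M}),S(\mathcal{N})$ on $H$ and their reduced versions $S(\tilde{\mathcal{M}}),S(\tilde{\mathcal{N}})$ driven by the bounded operator $A$ on $N(M_0)^{\bot}$: one must verify that $A$-invariance of the subspaces $S,T$ inside $N(M_0)^{\bot}$ transfers correctly to invariance of the $L_{2,\rho}$-valued solution operators (and back), which is exactly the content of \prettyref{lem:refregPen}. Once this reduction is in place, the proof is essentially the mirror of \prettyref{thm:strongDicho} transported through the generator $A$.
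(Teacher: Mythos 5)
Your proposal is correct and follows essentially the same route as the paper's proof: reduce via \prettyref{lem:refregPen} to the pencil associated with $(1,-A)$ on $N(M_{0})^{\bot}$, obtain the spectral separation from \prettyref{lem:decay_L_2} and \prettyref{lem:GP}, use the Dunford projection for the converse, and conclude with $\sigma(\mathcal{M})=\sigma(A)$ from \prettyref{prop:regPen}. Your treatment of the $\mathcal{N}$-side (reduced pencil $(1,A)$) is merely a more explicit spelling-out of what the paper leaves implicit.
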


\begin{proof}
By \prettyref{lem:refregPen}, $\mathcal{M}$ admits a mild exponential
dichotomy, if $\tilde{\mathcal{M}}$ admits a mild exponential dichotomy
with the same subspaces $S,T$, where $\tilde{\mathcal{M}}$ denotes
the pencil associated with $(1,-A).$ The latter is equivalent to
$\sigma(A)\cap\i\mathbb{R}=\emptyset$. Indeed, if $\sigma(A)\cap\i\mathbb{R}=\emptyset,$
we set $S\coloneqq P[N(M_{0})^{\bot}]$ and $T\coloneqq(1-P)[N(M_{0})^{\bot}],$
where $P$ denotes the Dunford projection on $\sigma(A)\cap\mathbb{C}_{\Re<0}.$
The assertion then follows by \prettyref{lem:decay_L_2}. If on the
other hand, $\tilde{\mathcal{M}}$ admits an exponential dichotomy
with invariant subspaces $S,T\subseteq N(M_{0})^{\bot},$ we obtain
\[
\sigma(A)=\sigma(\iota_{S}^{\ast}A\iota_{S})\cup\sigma(\iota_{T}^{\ast}A\iota_{T})\subseteq\mathbb{C}_{\Re<0}\cup\mathbb{C}_{\Re>0}
\]
again by \prettyref{lem:decay_L_2}. Since $\sigma(\mathcal{M})=\sigma(A)$
by \prettyref{prop:regPen}, the assertion follows.
\end{proof}
From \prettyref{sec:spectra}, \prettyref{thm:strongDicho} and \prettyref{thm:mild_exp_dich}
we derive the following statement.
\begin{cor}
$\mathcal{M}$ admits a strong exponential dichotomy if and only if
it admits a mild exponential dichotomy. Moreover, $\mathcal{M}$ is
strongly exponentially stable if and only if it is mildly exponentially
stable.
\end{cor}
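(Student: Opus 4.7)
The plan is to derive the corollary as an immediate consequence of the three preceding named results: the equality of spectra from Section~\ref{sec:spectra} ($\sigma_{\textnormal{IV}}(\mathcal{M})=\sigma(\mathcal{M})$), Theorem~\ref{thm:strongDicho} (strong exponential dichotomy/stability characterised via $\sigma_{\textnormal{IV}}(\mathcal{M})$), and Theorem~\ref{thm:mild_exp_dich} (mild exponential dichotomy/stability characterised via $\sigma(\mathcal{M})$).

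Concretely, I would argue as follows. By Theorem~\ref{thm:strongDicho}, $\mathcal{M}$ admits a strong exponential dichotomy if and only if $\sigma_{\textnormal{IV}}(\mathcal{M})\cap\i\mathbb{R}=\emptyset$, and it is strongly exponentially stable if and only if $\sigma_{\textnormal{IV}}(\mathcal{M})\subseteq\mathbb{C}_{\Re<0}$. Similarly, by Theorem~\ref{thm:mild_exp_dich}, $\mathcal{M}$ admits a mild exponential dichotomy if and only if $\sigma(\mathcal{M})\cap\i\mathbb{R}=\emptyset$, and it is mildly exponentially stable if and only if $\sigma(\mathcal{M})\subseteq\mathbb{C}_{\Re<0}$. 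Since the theorem of Section~\ref{sec:spectra} yields $\sigma_{\textnormal{IV}}(\mathcal{M})=\sigma(\mathcal{M})$, the two spectral conditions coincide in each case, and both equivalences of the corollary follow.

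There is essentially no obstacle here; the conceptual work has been done in proving $\sigma_{\textnormal{IV}}(\mathcal{M})=\sigma(\mathcal{M})$ and in establishing the two spectral characterisations. The corollary is a bookkeeping synthesis. If anything, the only subtle point to note is that the \emph{witnessing} decompositions $S\dotplus T$ in the two notions a priori live in different ambient spaces (namely $\textnormal{IV}$ versus $N(M_{0})^{\bot}$), so that the statement is about the existence of \emph{some} dichotomy rather than the transferability of a given decomposition; this is however precisely what the spectral criteria deliver, so no further argument is required.
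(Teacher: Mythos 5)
Your proposal is correct and matches the paper exactly: the corollary is stated there as a direct consequence of the spectral equality $\sigma_{\textnormal{IV}}(\mathcal{M})=\sigma(\mathcal{M})$ from \prettyref{sec:spectra} together with the spectral characterisations in \prettyref{thm:strongDicho} and \prettyref{thm:mild_exp_dich}. Your remark that the witnessing decompositions live in different ambient spaces, so only existence (not transfer of a given splitting) is claimed, is also the right reading of the statement.
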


\bibliographystyle{abbrv}

\end{document}